\newtheorem*{rep@theorem}{\rep@title}
\newcommand{\newreptheorem}[2]{
	\newenvironment{rep#1}[1]{
		\def\rep@title{#2 \ref{##1}}
		\begin{rep@theorem}}
		{\end{rep@theorem}}}
\newtheorem*{rep@cor}{\rep@title}
\newcommand{\newrepcor}[2]{
	\newenvironment{rep#1}[1]{
		\def\rep@title{#2 \ref{##1}}
		\begin{rep@cor}}
		{\end{rep@cor}}}
\newtheorem*{rep@prop}{\rep@title}
\newcommand{\newrepprop}[2]{
	\newenvironment{rep#1}[1]{
		\def\rep@title{#2 \ref{##1}}
		\begin{rep@prop}}
		{\end{rep@prop}}}
\theoremstyle{plain}
\newtheorem{cor}{Corollary}[section]
\newtheorem{theorem}[cor]{Theorem}
\newtheorem*{theorem*}{Theorem}
\newtheorem{prop}[cor]{Proposition}
\newtheorem*{prop*}{Proposition}
\newtheorem{alphatheorem}{Theorem}
\newtheorem{lemma}[cor]{Lemma}
\theoremstyle{definition}
\newtheorem{defi}[cor]{Definition}
\newtheorem{remark}[cor]{Remark}
\newtheorem*{remark*}{Remark}
\theoremstyle{remark}
\newtheorem*{notation*}{Notation}
\newlist{steps}{enumerate}{1}
\setlist[steps, 1]{itemsep=8pt,leftmargin=0cm,itemindent=.5cm,labelwidth=\itemindent,labelsep=0cm,align=left,label = \textbf{\emph{Step \arabic*}:\,}}
\newcommand{\C}{{\mathbb C}}
\newcommand{\R}{{\mathbb R}}
\newcommand{\Hyp}{\mathbb{H}}
\newcommand{\SL}{\mathrm{SL}}
\newcommand{\PSL}{\mathrm{PSL}}
\newcommand{\GG}{\mathbb{G}}
\newcommand{\Isom}{\mathrm{Isom}}
\newcommand{\inner}[1] {\langle #1 \rangle}
\newcommand{\inners}{\langle \cdot, \cdot \rangle}
\newcommand{\QF}{\mathrm{QF}}
\newcommand{\TSTS}{\mathcal T(S)\times \mathcal T(\overline{S})}
\newcommand{\CSCS}{\mathcal C(S)\times \mathcal C(\overline{S})}
\newcommand{\ccpair}{(c_1, \overline{c_2})}
\newcommand{\ccpairclass}{([c_1], [\overline{c_2}])}
\newcommand{\CP}{\mathbb{CP}^1}
\newcommand{\phee}{\varphi}
\newcommand{\RR}{\mathrm R}
\newcommand{\HQD}{\mathrm{HQD}}
\newlist{primenumerate}{enumerate}{1}
\setlist[primenumerate,1]{label={(\arabic*$'$)}}
\begin{document}

	\setcounter{secnumdepth}{2}
	\setcounter{tocdepth}{2}
	
	\title[A metric uniformization model for $\QF(S)$]{A metric uniformization model for the Quasi-Fuchsian space}

\author[Christian El Emam]{Christian El Emam}
\address{Christian El Emam: University of Torino, Dipartimento di Matematica ``Giuseppe Peano", Via Carlo Alberto, 10, 10123 Torino, Italy.} \email{christian.elemam@unito.it}

	\maketitle
	
	\vspace{-0.6cm}

	\begin{abstract}
		We introduce and study a novel metric uniformization model for the quasi-Fuchsian space $\QF(S)$, defined through a class of $\C$-valued bilinear forms on $S$, called Bers metrics, which coincide with hyperbolic Riemannian metrics along the Fuchsian locus. 
		
		By employing this approach, we present a new model of the holomorphic tangent bundle of $\QF(S)$ that extends the metric model for the Teichmüller space defined by Berger and Ebin, and give an integral representation of the Goldman symplectic form and of the holomorphic extension of the Weil-Petersson metric to $\QF(S)$, with a new proof of its existence and non-degeneracy. We also determine new bounds for the Schwarzian of Bers projective structures extending Kraus' estimate.
		Lastly, we use this formalism to give alternative proofs to several classic results in quasi-Fuchsian theory.
	\end{abstract}

	\tableofcontents

\section{Introduction}

Let $S$ be a closed oriented surface of genus $\mathrm{g}\ge 2$. The quasi-Fuchsian space $\QF(S)$ is a distinctive neighborhood of the space of Fuchsian representations inside the $(\pi_1(S), \PSL(2,\C))$-character variety. Quasi-Fuchsian hyperbolic manifolds and the space $\QF(S)$ have been widely studied in the last century. One of the most powerful theorems in this area is Bers' Simultaneous Uniformization Theorem (see Theorem \ref{Bers Theorem}), showing that $\QF(S)$ is biholomorphic to $\mathcal T(S)\times \mathcal T(\overline S)$, where $\mathcal T(S)$ is the Teichmüller space of $S$. 

In this paper, we approach a new \emph{metric} perspective in the study of the geometry of quasi-Fuchsian space. In fact, Bers Theorem suggests the definition of a family of $\C$-valued bilinear forms on $S$, that we call \emph{Bers metrics} (see Sections \ref{introintro: metric model} and \ref{subsection hyperbolic complex metrics}), and a way to associate to each of them a quasi-Fuchsian holonomy, in a way that extends the holonomy assignment of hyperbolic Riemannian metrics to Fuchsian representations. One useful aspect of this approach is that there is a natural way to deform Bers metrics into new Bers metrics by adding holomorphic quadratic differentials, inducing a natural way to deform quasi-Fuchsian representations (see Theorem \ref{thm intro A}). 
This approach comes with several consequences:
\begin{itemize}[leftmargin=*, noitemsep,topsep=0pt]
	\item We give a new metric model for the holomorphic tangent bundle of $\QF(S)$, which extends the metric model for the tangent bundle of the space of hyperbolic metrics up to isotopy as in \cite{BergerEbin} and \cite{FischerTromba} (see Remark \ref{rmk: metric model Teichmuller}).
	\item We give an integral description for the Goldman symplectic form on $\QF(S)$ and of the holomorphic extension of the Weil-Petersson metric to $\QF(S)$ (which coincides with the one showed in \cite{LoustauSanders}), and provide a new proof of its holomorphicity and non-degeneracy. 
	\item We present a new bound for the Schwarzian of Bers projective structures. More precisely, we give a lower bound for the distance from the boundary of any point inside the open bounded subset $\mathcal{S}_{[c]}^+\subset \HQD([c])$ of the Schwarzian derivatives of Bers projective structures for $[c]\in \mathcal T(S)$.
	\item  We give new proofs and interpretations of a few well-known results, about the differential of the Schwarzian map, the description of the affine structures on $\mathcal T(S)$ from Bers embeddings, and McMullen's quasi-Fuchsian reciprocity Theorem.
\end{itemize}

Many of the tools developed in this paper have been used in recent works \cite{ElSa,ElSa2} to prove results in higher Teichmüller theory.

\subsection{The metric uniformizing approach for quasi-Fuchsian space}
\label{introintro: metric model}

The complex Lie group $\PSL(2,\C)$ acts on $\Hyp^3$ by isometries and by Möbius maps on its visual boundary $\CP\cong \partial{\Hyp^3}$.

We say that a discrete and faithful representation $\rho\colon \pi_1(S)\to \PSL(2,\C)$ is \textbf{quasi-Fuchsian} if its limit set $\Lambda_\rho\subset \CP$ is a Jordan curve on $\CP$, hence its complementary is the disjoint union of two topological disks. The reader can find a short introduction to this topic in Section \ref{introsection QF}.

Let $\mathcal C(S)$ denote the space of complex structures on the oriented surface $S$. The most important theorem about quasi-Fuchsian representations is Bers' Simultaneous Uniformization Theorem (\cite{BersUniformization}), which we recall in the following version.

\begin{theorem} [Bers' Simultaneous Uniformization Theorem]
	\label{Bers Theorem}
	For all $(c_1, \overline{c_2})\in \CSCS$, there exist:
	\begin{itemize}[noitemsep,topsep=0pt]
		\item a unique quasi-Fuchsian representation $\rho\colon \pi_1(S)\to \PSL(2, \mathbb C)$ up to conjugacy, 
		\item a unique $\rho$-equivariant holomorphic diffeomorphism \[\sigma_1= \sigma_1(c_1,\overline {c_2})\colon (\widetilde S, c_1) \to \Omega_\rho^+\ ,\]
		\item  a unique $\rho$-equivariant antiholomorphic diffeomorphism \[\overline{\sigma_2}=\overline{\sigma_2}(c_1,\overline {c_2})\colon (\widetilde S, \overline{c_2}) \to \Omega_\rho^-\ .\]
	\end{itemize}
where $\Omega_\rho^+\sqcup \Omega_\rho^-=\CP\setminus \Lambda_\rho$.
	This correspondence defines a biholomorphism \[\mathfrak{B}\colon \TSTS\xrightarrow{\sim}\QF(S).\]
\end{theorem}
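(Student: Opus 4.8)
The plan is to derive the statement from the Ahlfors--Bers Measurable Riemann Mapping Theorem: I would encode the pair of complex structures as a single Beltrami differential on $\CP$, solve the associated Beltrami equation to produce the quasi-Fuchsian holonomy together with the two equivariant maps, and then upgrade the resulting set-theoretic correspondence to a biholomorphism using the analytic dependence of the solution on the Beltrami parameter.

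First I would fix a reference Fuchsian representation $\rho_0\colon \pi_1(S)\to \PSL(2,\R)\subset \PSL(2,\C)$ uniformizing a base hyperbolic structure, so that its limit set is the round circle $\RP^1\subset \CP$ and the two complementary disks $\mathbb D^+,\mathbb D^-$ carry free, properly discontinuous $\pi_1(S)$-actions. A complex structure $c_1$ on $S$ (resp. $\overline{c_2}$ on $\overline S$), measured against the reference structure, lifts to a $\rho_0$-equivariant Beltrami differential $\mu_1$ on $\mathbb D^+$ (resp. $\mu_2$ on $\mathbb D^-$) of sup-norm strictly less than $1$. I would then assemble these into a single $\rho_0$-equivariant measurable Beltrami differential $\mu$ on all of $\CP$, extended by zero on the measure-zero limit set, with $\|\mu\|_\infty<1$.

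The Measurable Riemann Mapping Theorem now furnishes a unique normalized quasiconformal homeomorphism $f^\mu\colon \CP\to \CP$ solving $\bar\partial f^\mu=\mu\,\partial f^\mu$. Since $\mu$ is $\rho_0$-equivariant, the uniqueness clause forces $f^\mu\circ \rho_0(\gamma)\circ (f^\mu)^{-1}$ to be M\"obius for every $\gamma$, which defines the candidate representation $\rho:=f^\mu_*\rho_0$; its limit set is the quasicircle $\Lambda_\rho=f^\mu(\RP^1)$ and $\Omega_\rho^\pm=f^\mu(\mathbb D^\pm)$. By construction the restriction $f_1:=f^\mu|_{\mathbb D^+}$ has Beltrami coefficient $\mu_1$, hence is holomorphic for $c_1$ and $\rho$-equivariant, while $f^\mu|_{\mathbb D^-}$ yields the antiholomorphic map $\overline{f_2}$ once the orientation reversal implicit in passing to $\overline S$ is taken into account. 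Invariance of the entire construction under isotopies of $c_1$ and $\overline{c_2}$ shows that it descends to a well-defined map on $\TSTS$, and the normalization of $f^\mu$ accounts for the ``up to conjugacy'' clause.

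For bijectivity I would use uniqueness in both directions: two pairs producing conjugate holonomies yield the same normalized $f^\mu$, hence the same Beltrami data, giving injectivity; conversely, any quasi-Fuchsian $\rho$ is conjugate to one of this form by solving the Beltrami equation for the quasiconformal map straightening $\Lambda_\rho$ to a round circle, after which the two quotient complex structures on $\Omega_\rho^\pm/\rho$ recover the preimage pair, giving surjectivity. The main obstacle, and the analytically most delicate point, is the holomorphicity of $\mathfrak B$ and of its inverse: this rests on the Ahlfors--Bers theorem on the holomorphic dependence of $f^\mu$ on the parameter $\mu$, combined with the fact that the chart identifying a neighborhood in $\mathcal C(S)$ (resp. $\mathcal C(\overline S)$) with an open set of Beltrami differentials is holomorphic precisely because the conjugation has been absorbed into the second factor $\mathcal T(\overline S)$. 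Granting this, $\mathfrak B$ is a holomorphic bijection between complex manifolds of equal dimension with holomorphic inverse, which completes the proof.
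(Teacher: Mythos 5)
The first thing to note is that the paper does not prove this statement at all: Theorem \ref{Bers Theorem} is quoted as classical background, with the proof deferred to Bers' original paper \cite{BersUniformization}, and the surrounding material (Beltrami coefficients, the measurable Riemann mapping theorem, the complex structure on $\mathcal T(S)$ and on the character variety) is only recalled in Section \ref{sec: essential background}. So there is no internal argument to compare yours against. What you have written is the standard Ahlfors--Bers proof outline -- encode the pair $(c_1,\overline{c_2})$ as a single $\rho_0$-equivariant Beltrami coefficient on $\CP$, solve the Beltrami equation, conjugate $\rho_0$ by the normalized solution, and invoke holomorphic dependence on the Beltrami parameter for the biholomorphism -- and that is indeed the correct strategy, the one used in the literature this paper points to.

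That said, two of your steps are compressed to the point of being genuine gaps. The surjectivity argument is circular as phrased: with the paper's definition of quasi-Fuchsian (discrete, faithful, limit set a Jordan curve), you cannot ``solve the Beltrami equation for the quasiconformal map straightening $\Lambda_\rho$ to a round circle,'' because to apply the measurable Riemann mapping theorem you would first need a $\rho$-invariant Beltrami coefficient on $\CP$, i.e.\ you would need to already know that $\rho$ is a quasiconformal deformation of a Fuchsian group -- which is essentially the statement being proved (this equivalence is a theorem of Maskit, not a formal consequence of the MRMT). The non-circular route is different: take the Riemann maps of $\Omega_\rho^\pm$, observe that the conjugated groups are finitely generated, torsion-free Fuchsian groups of the first kind without parabolics (otherwise $\pi_1(S)$ would be free), hence cocompact, so that $\Omega_\rho^\pm/\rho$ are closed genus-$\mathrm g$ Riemann surfaces; mark them using the fact that the isomorphism induced by $\rho$ is realized by a homeomorphism (Dehn--Nielsen--Baer), and then feed the resulting pair back into the existence-plus-uniqueness part of the theorem. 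Your injectivity step also skips the actual mechanism: ``conjugate holonomies yield the same normalized $f^\mu$'' is not immediate; the standard argument conjugates so the holonomies are equal, composes the two $\rho$-equivariant maps into a $\pi_1(S)$-equivariant biholomorphism $(\widetilde S, c_1)\to (\widetilde S, c_1')$, and notes that it descends to a conformal map isotopic to the identity, giving $[c_1]=[c_1']$; uniqueness of $\rho$ and of $f_1$, $\overline{f_2}$ for a fixed pair similarly rests on the fact that a quasicircle has measure zero, so a quasiconformal self-map of $\CP$ conformal off $\Lambda_\rho$ is M\"obius. These repairs are standard, but as written those two steps do not go through.
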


In the paper, we will mostly see $\mathfrak{B}$ as an identification.

Now, for each $c\in \mathcal C(S)$, the uniformization theorem provides a $(\pi_1(S),\PSL(2,\R))$-equivariant biholomorphism $\sigma\colon (\widetilde S, c)\to H\subset \C$ from its universal cover to the upper half plane, and the hyperbolic metric in the conformal class of $c$ is the pull-back through $f$ of the Riemannian metric on the hyperbolic space in its half-plane model, namely 
\[
 \sigma^*\left(\frac 1 {Im(z)^2} dzd\overline z \right)= -\frac 4 {(\sigma-\overline \sigma)^2}dfd\overline f\ 
\] 
which, being $\pi_1(S)$-invariant, defines a hyperbolic metric on $S$.

Bers Theorem suggests a way to extend this construction, assigning to each element of $\CSCS$, a symmetric 2-tensor on $S$ in the following way:

\begin{equation}
	\label{eq: da cscs a Bers metrics}
	\begin{split}
		g\colon	\CSCS & \to \Gamma (Sym_2(TS, \mathbb C))\\
		(c_1, \overline{c_2}) &\mapsto g(c_1, \overline {c_2}):= -\frac 4 {(\sigma_1-\overline{\sigma_2})^2} d\sigma_1\cdot d\overline{\sigma_2}
	\end{split}
\end{equation}
where $\sigma_1=\sigma_1(c_1,\overline{c_2})$ and $\overline{\sigma_2}=\overline{\sigma_2}(c_1,\overline{c_2})$ are as in the statement of Bers Theorem \ref{Bers Theorem}, and $\Gamma (Sym_2(TS, \mathbb C))$ denotes the space of smooth sections of $\mathbb C$-valued symmetric 2-forms on $TS$ (in other words, $Sym_2(TS, \mathbb C)= Sym_2(TS, \mathbb R) \oplus i Sym_2(TS, \mathbb R)$). One can easily show that the tensors defined this way are well-defined on $\widetilde S$, namely they depend neither on the choice of the affine chart of $\CP$ (containing the images of $\sigma_1$ and $\overline{\sigma_2}$), nor on the conjugacy class of the quasi-Fuchsian representation in the construction. In addition, it is clearly $\pi_1(S)$-invariant, defining a symmetric $\C$-valued tensor on $S$. Finally, if $c_1=c_2=:c$, then $g(c,\overline{c})$ is just the hyperbolic metric in the conformal class of $c$.

We call $\textbf{Bers metrics}$ the bilinear forms in the form $g(c_1,\overline{c_2})$ as in \eqref{eq: da cscs a Bers metrics}.

In fact, Bers metrics admit an intrinsic characterization: they correspond to the connected component of positive complex metrics of constant curvature $-1$ that contains hyperbolic Riemannian metrics (see Section \ref{subsection hyperbolic complex metrics} below).

In a sense, Bers metrics allow to \emph{uniformize} $\CSCS$ in a way that extends the classic uniformization of complex structures with hyperbolic Riemannian metrics.

We define the \textbf{holonomy} of $g=g(c_1, \overline{c_2})$ as $[g]=([c_1], [\overline{c_2}])\in \QF(S)$.

A useful aspect of this approach is that (pointwise) holomorphic deformations of Bers metrics induce holomorphic deformations of the holonomy in $\QF(S)$ (see Theorem \ref{thm: schwartzian}). In fact, the first part of the paper discusses an interesting type of holomorphic deformation, which is the following.
Let $\HQD(c)$ denote the space of holomorphic quadratic differentials for the Riemann surface $(S,c)$. Then, if $g=g(c_1,\overline{c_2})$ is a Bers metric, for all $q_1\in \HQD(c_1)$ and $\overline{q_2}\in \HQD(\overline{c_2})$ small enough,
\[
g+q_1 \qquad \qquad \text{and} \qquad \qquad g+\overline{q_2}
\] 
are Bers metrics as well. This comes with several interesting consequences, as explained in the following theorem (in the text, they follow from Propositon \ref{prop: g+q same curvature}, Proposition \ref{prop: star-shaped}, Theorem \ref{thm: schwartzian}, and Corollary \ref{cor: g+q isotopy independent})

\begin{alphatheorem}
	\label{thm intro A}
	Let $\ccpair\in \CSCS$ and consider the Bers metric $g=g(c_1, \overline{c_2})$.
	
	There exists an open subset $U_g\subset \HQD(c_1)$, $0\in U_g$, such that, for all $q_1\in U_g$, $g+q_1$ is a Bers metric.
	The map
			\begin{align*}
		U_g&\to \TSTS\\
		q_1&\mapsto [g+q_1]
	\end{align*}
	gives a holomorphic local parametrization of the Bers slice $\{[c_1]\}\times \mathcal T(\overline S)$, which is essentially independent from the choice of $c_1$ and $\overline{c_2}$ in their isotopy classes $[c_1]\in \mathcal T(S)$ and $[\overline{c_2}]\in \mathcal T(\overline S)$.
\end{alphatheorem}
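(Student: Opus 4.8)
The plan is to verify directly that $g+q_1$ is again a Bers metric by using the intrinsic characterization of Bers metrics as the Fuchsian connected component of the positive complex metrics of constant curvature $-1$, and then to study the induced map into $\QF(S)$ via the holomorphic-dependence lemma and the Schwarzian theorem. First I would introduce adapted coordinates: setting $u=f_1$ and $v=\overline{f_2}$, the pair $(du,dv)$ is a $\C$-linear coframe of $T^*\widetilde S\otimes\C$, their independence being exactly the nondegeneracy of $g$, and in this coframe $g=2F\,du\,dv$ with $F=-2/(u-v)^2$. A holomorphic quadratic differential $q_1\in\HQD(c_1)$ then reads $q_1=\varphi\,du^2$ with $\varphi$ a holomorphic function of $u$ (equivalently $\partial_v\varphi=0$), so that $g+q_1=du\cdot(\varphi\,du+2F\,dv)$. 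Its isotropic directions are $\{du=0\}$ and $\{\varphi\,du+2F\,dv=0\}$; the first, which recovers the complex structure $c_1$, is preserved, whereas the second is deformed. This already indicates that the holonomy of $g+q_1$ remains in the Bers slice $\{[c_1]\}\times\mathcal T(\overline S)$.

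The key point is that $g+q_1$ still has constant curvature $-1$. I would run the null-coframe structure equations for $\theta^1=du$ and $\theta^2=\varphi\,du+2F\,dv$: from $d\theta^1=0$ the Levi-Civita connection form is $\alpha=a\,du$, and the equation for $d\theta^2$ forces $a=\partial_u\log F$, independently of $\varphi$. Consequently the curvature form $\Omega^1_1=-\partial_u\partial_v\log F\,du\wedge dv$ and the area form $\theta^1\wedge\theta^2=2F\,du\wedge dv$ depend only on $F$, which is untouched by adding $\varphi\,du^2$; since $g$ has curvature $-1$, so does $g+q_1$. As positivity of complex metrics is an open condition, for $q_1$ in a suitable star-shaped neighborhood $U_g$ of $0$ the whole segment $\{g+tq_1:t\in[0,1]\}$ consists of positive complex metrics of curvature $-1$, so $g+q_1$ lies in the same connected component as $g$ and is therefore a Bers metric. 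By the intrinsic characterization it equals $g(c_1,\overline{c_2}')$ for a unique $\overline{c_2}'$ (with first factor $c_1$, by the previous paragraph), so its holonomy is $([c_1],[\overline{c_2}'])$.

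It remains to show that $q_1\mapsto[g+q_1]$ is a holomorphic local parametrization of the Bers slice. Holomorphicity follows from the holomorphic-dependence lemma, since $q_1\mapsto g+q_1$ is complex-linear and pointwise holomorphic, and the image lies in $\{[c_1]\}\times\mathcal T(\overline S)$ by the above. Since $\dim_\C\HQD(c_1)=3\mathrm{g}-3=\dim_\C\mathcal T(\overline S)$, by the holomorphic inverse function theorem it suffices to prove that the differential of $q_1\mapsto[\overline{c_2}']$ at $q_1=0$ is an isomorphism. This is the step I expect to be the main obstacle, and where the Schwarzian theorem enters: I would show that passing from $g$ to $g+q_1$ shifts the Schwarzian of the $c_1$-side developing map by a nonzero multiple of $q_1$, thereby identifying the map with an affine reparametrization of the inverse Bers embedding $\beta_{c_1}^{-1}$. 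Granting this, the differential is a nonzero multiple of $(d\beta_{c_1})^{-1}$, an isomorphism because the Bers embedding is a biholomorphism onto a bounded domain, which yields the local parametrization.

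Finally, the independence of the representatives is a naturality statement: the assignment $\ccpair\mapsto g(c_1,\overline{c_2})$ is equivariant under diffeomorphisms of $S$, so for $\psi$ isotopic to the identity one has $g(\psi^*c_1,\psi^*\overline{c_2})+\psi^*q_1=\psi^*(g+q_1)$, and $\psi^*$ acts trivially on $\QF(S)$ and, up to the canonical identification $\HQD(c_1)\cong\HQD(\psi^*c_1)$, on the domain; hence the parametrization depends only on $[c_1]$ and $[\overline{c_2}]$. The genuinely delicate ingredient in the whole argument is the Schwarzian shift used for the nondegeneracy; the curvature invariance, by contrast, reduces to the short computation above, which is precisely what makes the construction possible.
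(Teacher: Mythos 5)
Your first two steps are correct and in fact cleaner than the paper's own treatment: the null-coframe computation with $\theta^1=du$, $\theta^2=\varphi\,du+2F\,dv$, showing that the connection form $\omega=\partial_u\log F\,du$, the curvature form $d\omega$, and the area form $\theta^1\wedge\theta^2$ are all untouched by adding $\varphi\,du^2$, is a legitimately more elementary route to "$g+q_1$ has constant curvature $-1$" than the paper's argument (which defines $\beta$ by $q_1=g(\beta\cdot,\cdot)$, proves the Codazzi equation $d^\nabla\beta=0$, and transports the Levi-Civita connection through $\alpha=\mathrm{id}+\tfrac12\beta$ using $\beta^2=0$). The openness/star-shapedness argument via positivity plus the connected-component characterization of Bers metrics, the preservation of the isotropic direction $\{du=0\}$ (hence $c_+(g+q_1)=c_1$), and the use of the holomorphic-dependence lemma all match the paper.

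The genuine gap is the step you yourself flag as "the main obstacle": the nondegeneracy of the differential of $q_1\mapsto[g+q_1]$. You derive it from the claim that $\mathbf{Schw}_+(g+q_1)=\mathbf{Schw}_+(g)+\lambda q_1$ for some $\lambda\neq 0$, but you never prove that claim, and it cannot be quoted as a known fact: it is precisely Theorem \ref{thm: schwartzian} of the paper, which is listed as one of the ingredients of the statement you are proving. Its proof is the technical core of Section \ref{sec: Metric model}: one first establishes the formula for metrics with nearly-Fuchsian holonomy, via Epstein--Krasnov--Schlenker theory of immersion data at infinity (Lemma \ref{lemma: bers metric da dati +infty}, Proposition \ref{prop: II+Req implica g+2q}, Theorem \ref{thm: Schw e metriche a +inf}), and then propagates it to the whole Bers slice by an open--closed argument combined with analytic continuation in $q_1$ (the "Property $(P)$" steps). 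Nothing in your computation substitutes for this; the direct alternative, showing that the Beltrami differential $\frac{\varphi\,\partial_w z}{\rho}\frac{dw}{d\overline w}$ representing the infinitesimal deformation pairs nontrivially with $\HQD(\overline{c_2})$, is equally nontrivial and is exactly what the Schwarzian formula is used for in the paper. So as written, the parametrization claim is conditional on an unproven theorem.

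A second, smaller gap is the independence of representatives. Your naturality identity $g(\psi^*c_1,\psi^*\overline{c_2})+\psi^*q_1=\psi^*(g+q_1)$ only covers a \emph{simultaneous} isotopy of both structures, but the statement lets $c_1$ and $\overline{c_2}$ move \emph{independently} in their classes. After reducing by naturality to $\phi_1=\mathrm{id}$, one still has to compare $g(c_1,\overline{c_2})+q_1$ with $g(c_1,\phi^*\overline{c_2})+q_1$: these are two genuinely different metrics with the same holonomy, and equality of the deformed holonomies is the content of Proposition \ref{prop: g+q ben posto}, whose proof requires the integral invariance Lemma \ref{lemma: isotopy invariance of the integral} (a nontrivial computation with Lie derivatives and $\PSL(2,\C)$-equivariance) together with an integral-curve argument. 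This case does not follow from diffeomorphism equivariance alone.
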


An analog statement follows for deformations of the form $\overline{q_2}\mapsto g\ccpair+\overline{q_2}$.

In the paper, parts of these results are stated in greater generality for a wider class of complex metrics, called upper-projective and lower-projective complex metrics (see Section \ref{sec Bers metrics})

As a result, we get a \emph{uniformizing metric} model of the holomorphic tangent bundle to quasi-Fuchsian space given by 
\begin{equation}
	T_{([c_1],[\overline{c_2}] )} \QF(S)= \{q_1+\overline{q_2}\ |\  q_1\in \HQD(c_1),\  \overline{q_2}\in \HQD(\overline{c_2}) \}, 
\end{equation}

which is a vector subspace of the space of smooth sections of $Sym_2(TS, \mathbb C)$ and which extends the metric model for the tangent bundle of $\mathcal T(S)$ on the Fuchsian locus (see Remark \ref{rmk: metric model Teichmuller}).

\subsection{Metric deformation and Schwarzian deformation}

Bers Theorem associates to each pair $(c_1,\overline{c_2})\in \CSCS$ two projective structures $(\sigma_1(c_1, \overline{c_2}), \rho)$ and $(\overline{\sigma_2}(c_1, \overline{c_2}), \rho)$, for the complex structures $c_1$ and $\overline{c_2}$ respectively.

Consider the maps
\begin{equation}
	\label{eq: schw+ intro}
	\begin{split}
\mathbf{Schw}_+\colon	\{\text{Bers metrics}\} &\to \mathbf{HQD}(S)\\
	g(c_1, \overline{c_2}) &\mapsto Schw(\sigma_1(c_1, \overline{c_2}) )	
\end{split}
\end{equation}
and 
\begin{equation}
	\label{eq: schw- intro}
	\begin{split}
	\mathbf{Schw}_-\colon	\{\text{Bers metrics}\} &\to \mathbf{HQD}(\overline S)\\
	g(c_1, \overline{c_2}) &\mapsto Schw(\overline{\sigma_2}(c_1, \overline{c_2}))
\end{split}
\end{equation}
where $Schw$ denotes the Schwarzian derivative (see Section \ref{introsection proj structures}).

The following theorem (Theorem \ref{thm: schwartzian} in the paper) shows the relation between the metric deformation of $g$ with $g+q_1$ and $g+\overline{q_2}$ and the deformation of the Schwarzian of the corresponding Bers' projective structures.

\begin{alphatheorem}
	Let $g=g(c_1,\overline{c_2})$ be a Bers metric. 
	
	Then,
	\[
	\mathbf{Schw}_+(g+q_1)= \mathbf{Schw}_+(g) -\frac 1 2 q_1
	\]
	for all $q_1\in \HQD(c_1)$ such that $g+q_1$ is a Bers metric, and
	\[
	\mathbf{Schw}_-(g+\overline{q_2}) = \mathbf{Schw}_-(g) -\frac 1 2 \overline{q_2} 
	\]
	for all $\overline{q_2}\in \HQD(\overline{c_2})$ such that $g+\overline{q_2}$ is a Bers metric.
\end{alphatheorem}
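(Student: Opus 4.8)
The two identities are interchanged by swapping the roles of the holomorphic datum $(c_1,f_1)$ and the antiholomorphic datum $(\overline{c_2},\overline{f_2})$ (equivalently, by the conjugation exchanging $S$ and $\overline S$), so it suffices to treat $\mathbf{Schw}_+$; the statement for $\mathbf{Schw}_-$ then follows verbatim. I work in a local holomorphic coordinate $z$ for $c_1$. Writing $df_1=f_1'\,dz$ and expanding \eqref{eq: da cscs a Bers metrics}, the Bers metric $g=g(c_1,\overline{c_2})$ has vanishing $d\bar z^2$-component and a well-defined $dz^2$-component, which I denote $g^{2,0}$. Since every $q_1\in\HQD(c_1)$ is of type $dz^2$, the form $g+q_1$ still has vanishing $d\bar z^2$-part; hence by Proposition~\ref{prop: g+q ben posto} its positive complex structure is again $c_1$, so the developing map $\tilde f_1$ of $g+q_1$ is holomorphic for the \emph{same} $c_1$, and $g^{2,0}(g+q_1)=g^{2,0}(g)+q_1$. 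In particular $t\mapsto g+tq_1$, $t\in[0,1]$, is a segment of Bers metrics all sharing the positive structure $c_1$.

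The plan is then to reduce to an infinitesimal identity and integrate. Set $F(t)=\mathbf{Schw}_+(g+tq_1)\in\HQD(c_1)$, so that $F'(t)=\left.\frac{d}{ds}\right|_{0}\mathbf{Schw}_+\big((g+tq_1)+sq_1\big)$. Thus it suffices to prove the pointwise statement
\[
(\star)\qquad \ddt\,\mathbf{Schw}_+(g'+t\dot q)=-\tfrac12\,\dot q
\]
for every Bers metric $g'$ with positive structure $c_1$ and every $\dot q\in\HQD(c_1)$: integrating $(\star)$ along the segment immediately yields $F(1)-F(0)=-\tfrac12 q_1$, which is the claim.

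To establish $(\star)$ I would differentiate the defining identity $g'+t\dot q=-\frac{4}{(f_1^t-\overline{f_2}^{\,t})^2}\,df_1^t\cdot d\overline{f_2}^{\,t}$ at $t=0$ in the coordinate $z$. Writing $f_1=f_1^0$, $h=\overline{f_2}^{\,0}$, $v=\ddt f_1^t$ (holomorphic, since $c_1$ is fixed along the family), $\dot h=\ddt\overline{f_2}^{\,t}$ and $\dot q=q\,dz^2$, and separating types, one gets two scalar equations: the vanishing of the $dz\,d\bar z$-component of $\dot q$ (a relation among $v,\dot h$ and their derivatives) and the identification of the $dz^2$-component of $\dot q$ with the variation of $g^{2,0}$. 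On the other hand the Schwarzian variation formula reads
\[
\ddt\{f_1^t,z\}=u'''+2\phi\,u'+\phi'\,u,\qquad u=v/f_1',\ \ \phi=\{f_1,z\},
\]
so $(\star)$ amounts to: after using the $(1,1)$-equation to eliminate the derivatives of $\dot h$, the $(2,0)$-equation collapses to $u'''+2\phi\,u'+\phi'\,u=-\tfrac12 q$.

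I expect this last computation to be the main obstacle: the third-order operator in $u$ (hence in $v=\dot f_1$) must be produced, which requires showing that the many contributions of $\dot h$ and its derivatives organize so that the $\overline{f_2}$-dependence cancels, leaving exactly the claimed multiple of $q$. A cleaner route to the same end — likely the intended one — is to establish once and for all the closed formula
\[
\mathbf{Schw}_+(g)=-\tfrac12\,g^{2,0}+Schw(f_1^{\mathrm{Fuch}}),
\]
where $g^{2,0}$ is taken with respect to $c_1$ and $f_1^{\mathrm{Fuch}}$ is the Fuchsian developing map of $c_1$ (the additive term is fixed by evaluation at the Fuchsian point $\overline{c_2}=\overline{c_1}$, where $g$ is hyperbolic and $g^{2,0}=0$). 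Granting this formula, the theorem is immediate, since adding $q_1$ shifts $g^{2,0}$ by $q_1$ while leaving $f_1^{\mathrm{Fuch}}$, and hence the additive term, unchanged.
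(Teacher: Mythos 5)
Your overall strategy (reduce $\mathbf{Schw}_-$ to $\mathbf{Schw}_+$ by conjugation, then reduce to the infinitesimal identity $(\star)$ and integrate along the segment $g+tq_1$, which stays Bers because the admissible set is star-shaped) is structurally reasonable, but the proof has a genuine gap at its core: $(\star)$ is never established. You flag the computation producing the third-order operator as ``the main obstacle,'' and it is precisely that computation which \emph{is} the theorem; everything before it is bookkeeping. Note also that differentiating $t\mapsto f_1^t,\ \overline{f_2}^{\,t}$ presupposes that the developing maps depend differentiably on $t$, which needs justification (the paper obtains the needed regularity from holomorphic dependence of the holonomy, Lemma \ref{lemma: holomorphic dependence}, together with the fact that the holonomy map of projective structures is a local biholomorphism). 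A minor point: the fact that $c_+(g+q_1)=c_1$ is not Proposition \ref{prop: g+q ben posto} (which is the isotopy-invariance statement) but the earlier proposition on deformations of Bers metrics by quadratic differentials.

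More seriously, the fallback ``closed formula'' $\mathbf{Schw}_+(g)=-\tfrac12 g^{2,0}+Schw(f_1^{\mathrm{Fuch}})$, which you suggest is the intended route, is false. Take $\phi\in\mathrm{Diff}_0(S)$ non-holomorphic and $g=g(c_1,\phi^*(\overline{c_1}))$. By uniqueness in Bers' Theorem \ref{Bers Theorem} (as used in Lemma \ref{lemma: isotopy invariance of the integral}), $f_1(c_1,\phi^*(\overline{c_1}))=f_1(c_1,\overline{c_1})$ while $\overline{f_2}(c_1,\phi^*(\overline{c_1}))=\overline{f_1}\circ\phi$; hence $\mathbf{Schw}_+(g)=Schw(f_1^{\mathrm{Fuch}})$, but in the Fuchsian coordinate $z$ one computes
\[
g^{2,0}=-\frac{4\,\overline{\partial_{\bar z}\phi}}{\bigl(z-\overline{\phi}\bigr)^2}\,dz^2\neq 0
\]
(indeed not even holomorphic), whereas your formula would force $g^{2,0}=0$. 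The flaw in your normalization argument is that evaluation at the Fuchsian point pins down the additive term only if every Bers metric with positive structure $c_1$ is reachable from $g(c_1,\overline{c_1})$ by adding elements of $\HQD(c_1)$; but isotopy deformations of $\overline{c_2}$ also fix $c_+$, and along these $\mathbf{Schw}_+$ is constant while $g^{2,0}$ is not, so $\mathbf{Schw}_+ +\tfrac12 g^{2,0}$ is not a function of $c_1$ alone. For contrast, the paper avoids any direct computation of the variation: it verifies the formula on the nonempty set of Bers metrics with nearly-Fuchsian holonomy via Epstein/Krasnov--Schlenker data at infinity (Theorem \ref{thm: Schw e metriche a +inf} combined with Proposition \ref{prop: II+Req implica g+2q} gives exactly the factor $-\tfrac12$), and then propagates the identity over the whole Bers slice by an openness--closedness--connectedness argument powered by Lemma \ref{lemma: holomorphic dependence} and analytic continuation. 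Some input of this geometric kind (or an honest completion of your local computation) is indispensable; as written, your argument proves nothing beyond the reduction.
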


\subsection{The holomorphic extension of Weil-Petersson metric}

A few extensions of the Weil-Petersson metric to $\QF(S)$ have been studied in literature.
We mention, for instance, that in \cite{bridgeman2010hausdorff} and \cite{bridgeman2015pressure}, the authors define and study a Riemannian extension. 
 
In \cite{LoustauSanders}, Loustau and Sanders show that the Weil-Petersson metric extends to a holomorphic Riemannian metric (see Section \ref{introsection: holo Riemannian metrics} for the definition) on the quasi-Fuchsian space, whose real part is therefore a pseudo-Riemannian metric on $\QF(S)$. We show that the metric formalism allows to give an alternative proof of the existence and uniqueness of the holomorphic extension of the Weil-Petersson metric, providing a new explicit description of it.

Let us define it. Let $(c_1, \overline{c_2})\in \CSCS$ and consider the corresponding Bers metric $g=g(c_1, \overline{c_2})$.
The metric $g$ induces a $\C$-bilinear form on the bundle $Sym_2(T S)\oplus i Sym_2(T S)$ given by
	\[
<\tau_1, \tau_2>_g= \sum_{i,j,k, \ell=1}^2 \tau_1(X_i, X_j)\tau_2(X_k,X_\ell) g^{ik}g^{j\ell} \ .
\]
where $\{X_1, X_2\}$ is any local basis for $TS$ and $(g^{ij})$ is the inverse matrix of the representative matrix $(g_{ij})$.

Bers metrics come with a natural notion of area form compatible with the orientation (see Remark \ref{rmk: area form}), namely
\[
dA_g= -\frac{2i}{(\sigma_1-\overline{\sigma_2})^2}d\sigma_1\wedge d\overline{\sigma_2}\ .
\]

One can therefore define a $\C$-bilinear form on $T_{([c_1],[\overline{c_2}])} \QF(S)= \{q_1+\overline{q_2}\ |\ q_1\in \HQD(c_1), \overline{q_2}\in \HQD(\overline{c_2}) \}$ by considering
	\begin{equation}
	\inner{\tau_1, \tau_2}_g= \frac 1 {8}\int_S <\tau_1, \tau_2>_g dA_g
\end{equation}

The following theorem follows from Proposition \ref{prop: generalita su metrica Riemanniana holo}, Theorem \ref{thm: holomorphic Riemannian}, Corollary \ref{cor: the metric is unique}, and Corollary \ref{cor: mapping class group invariance}.

\begin{alphatheorem}
	The bilinear form $\inners_g$ only depends on $g$ through its holonomy, and it defines a holomorphic Riemannian metric on $\QF(S)$ which coincides with the Weil-Petersson metric on the Fuchsian locus and which is invariant under the action of the mapping class group. 
\end{alphatheorem}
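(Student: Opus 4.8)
The plan is to verify in turn the properties bundled in the statement — $\C$-bilinearity and symmetry, independence of the chosen Bers representative, holomorphicity, non-degeneracy, agreement with Weil--Petersson on the Fuchsian locus, and mapping-class invariance — organising everything around one structural computation of the pointwise pairing. Writing a tangent vector as $q_1+\overline{q_2}$ with $q_1\in\HQD(c_1)$ and $\overline{q_2}\in\HQD(\overline{c_2})$ (Theorems A and B), I would first compute $\inners_g$ in the ``double'' coordinate $(f_1,\overline{f_2})$, in which $g=-4(f_1-\overline{f_2})^{-2}\,df_1\,d\overline{f_2}$ has vanishing inverse-metric components $g^{f_1 f_1}=g^{\overline{f_2}\,\overline{f_2}}=0$. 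Because of this, the contraction $\inner{\alpha,\beta}_g=\sum\alpha(\partial_i,\partial_j)\beta(\partial_k,\partial_\ell)g^{ik}g^{j\ell}$ annihilates $\inner{q_1,q_1'}_g$ and $\inner{\overline{q_2},\overline{q_2}'}_g$ identically, so the bilinear form is block-anti-diagonal and reduces to the single cross-pairing
\[ P_g(q_1,\overline{q_2})=\tfrac18\int_S\inner{q_1,\overline{q_2}}_g\,dA_g=C\int_S\phi(f_1)\,\bar\psi(\overline{f_2})\,(f_1-\overline{f_2})^2\,df_1\wedge d\overline{f_2}, \]
where $q_1=\phi\,df_1^2$ and $\overline{q_2}=\bar\psi\,d\overline{f_2}^{\,2}$; symmetry and $\C$-bilinearity are then immediate. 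This is the content I would attribute to Proposition \ref{prop: generalita su metrica Riemanniana holo}.

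Next I would prove well-posedness, i.e. that $P_g$ depends on $g$ only through its holonomy. Two Bers metrics with the same holonomy are $g=g(c_1,\overline{c_2})$ and $\widehat g=g(\Phi_1^*c_1,\Phi_2^*\overline{c_2})$ with $\Phi_1,\Phi_2\in\mathrm{Diff}_0(S)$, and the induced identification of tangent spaces sends $q_1\mapsto\Phi_1^*q_1$, $\overline{q_2}\mapsto\Phi_2^*\overline{q_2}$. The key observation is that the integrand of $P_g$ is the pullback, under the equivariant map $(f_1,\overline{f_2})\colon\widetilde S\to\C^2$, of the holomorphic (hence closed) and diagonally $\rho$-invariant $2$-form $\eta=\phi(z)\bar\psi(w)(z-w)^2\,dz\wedge dw$. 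Passing to $(\Phi_1^*c_1,\Phi_2^*\overline{c_2})$ replaces this map by $(f_1\circ\widetilde\Phi_1,\overline{f_2}\circ\widetilde\Phi_2)$, which is equivariantly homotopic to $(f_1,\overline{f_2})$ since $\Phi_i\simeq\mathrm{id}$; as $\eta$ is closed and $S$ is closed, homotopy invariance of the integral of a closed form gives $P_g=P_{\widehat g}$. The same naturality for a single $\Phi\in\mathrm{Diff}^+(S)$, using $g(\Phi^*c_1,\Phi^*\overline{c_2})=\Phi^*g(c_1,\overline{c_2})$ and change of variables, yields invariance under the mapping class group (Corollary \ref{cor: mapping class group invariance}).

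For holomorphicity I would use the holomorphic charts of $\QF(S)$ from Theorems A and B to transport a basis of $\HQD(c_1)\oplus\HQD(\overline{c_2})$ to a holomorphic frame of the tangent bundle; in this frame the entries of $\inners$ are the numbers $P_{g'}(\cdot,\cdot)$, so it suffices to show each is holomorphic in the base point. By Lemma \ref{lemma: holomorphic dependence}, the developing data, hence the integrand of $P_{g'}$, depends holomorphically on the $\QF(S)$-parameter; differentiating under the integral sign, the $\bar\partial$-derivative passes inside and kills the integrand, so the entries are holomorphic. This is the core of Theorem \ref{thm: holomorphic Riemannian}. Agreement with Weil--Petersson is then a restriction to the Fuchsian locus $c_1=c_2=c$: there $\overline{f_2}=\overline{f_1}$, so $(f_1-\overline{f_2})^2=-4/\lambda^2$ with $\lambda^2$ the hyperbolic density, and $P_g$ reduces, up to the normalisation $\tfrac18$, to the Hermitian pairing $\int_S\phi\bar\psi\,\lambda^{-2}$; evaluating on a real vector $q+\bar q$ gives twice its real part, i.e. the real Weil--Petersson metric.

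The step I expect to be the main obstacle is non-degeneracy. By the block-anti-diagonal structure $\det\inners_g=(-1)^{3\mathrm g-3}(\det P_g)^2$, so non-degeneracy is equivalent to non-degeneracy of the cross-pairing $P_g\colon\HQD(c_1)\times\HQD(\overline{c_2})\to\C$ between spaces of equal dimension $3\mathrm g-3$. The delicate point is that $\det P_g$ is holomorphic and non-vanishing on the \emph{totally real} Fuchsian locus, which does not by itself forbid zeros elsewhere in $\QF(S)$; continuity from the positive-definite Fuchsian case only gives non-degeneracy near the locus. I would therefore argue directly, identifying $P_g(q_1,\cdot)$, via the Schwarzian deformation Theorem \ref{thm: schwartzian} and the differential of the Bers embedding $\overline{c_2}\mapsto\mathbf{Schw}_+(g(c_1,\overline{c_2}))$, with a perfect duality pairing on quadratic differentials: since the Bers embedding is a local biholomorphism its differential is an isomorphism, and composing with the non-degenerate integration pairing forces $P_g$ to be non-degenerate on all of $\QF(S)$. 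Assembling Steps 1--4 with the uniqueness statement of Corollary \ref{cor: the metric is unique} then identifies $\inners$ with the Loustau--Sanders holomorphic extension and completes the proof.
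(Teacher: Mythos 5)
Most of your plan coincides with the paper's actual proof. The block-anti-diagonal structure and the explicit cross-pairing are parts (1)--(3) of Proposition \ref{prop: generalita su metrica Riemanniana holo}; your non-degeneracy argument (compose the differential of the Bers parametrization, an isomorphism by Theorem \ref{thm: schwartzian}, with the perfect $\HQD$--Beltrami duality) is exactly how Theorem \ref{thm: holomorphic Riemannian} obtains it, via Lemma \ref{lemma: inners tra Lk e Rj costante sulla slice} and Remark \ref{rmk: models for the tangent}; and your closing appeal to Corollary \ref{cor: the metric is unique} matches the paper. Two of your steps take genuinely different routes, and both are correct: for independence of the representative you integrate the pullback of the closed, diagonally $\rho$-invariant holomorphic $2$-form $\phi(z)\bar\psi(u)(z-u)^2\,dz\wedge du$ and invoke equivariant-homotopy invariance, where the paper (Lemma \ref{lemma: isotopy invariance of the integral}) instead differentiates along the isotopy and exhibits an explicit invariant primitive; and you get mapping class group invariance by direct change of variables, where the paper (Corollary \ref{cor: mapping class group invariance}) deduces it from invariance of Weil--Petersson on the Fuchsian locus plus uniqueness of the holomorphic extension. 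Your versions of these two points are cleaner and more self-contained.

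The genuine gap is in your holomorphicity step. Lemma \ref{lemma: holomorphic dependence} goes in the opposite direction from the way you use it: its hypothesis is a family of Bers metrics that is pointwise holomorphic in the parameter, and its conclusion is that the induced holonomy map to $\QF(S)$ is holomorphic; it does not say that, along a holomorphic family of points of $\QF(S)$, the developing data or Bers-metric representatives depend holomorphically on the point. Moreover, the integrand of your matrix entry $P_{g'}$ involves not only $g'$ but also the $\HQD$-representatives of the transported frame vectors at the varying base point (for instance, along the slice $\QF([c_1],\bullet)$ the field $R_{\overline{q_2}}$ is represented at $([c_1],[\overline{c_2}'])$ by some $\overline\tau\in\HQD(\overline{c_2}')$ that changes with the point), and the pointwise holomorphic dependence of these representatives is essentially equivalent to the holomorphicity you are trying to prove; so ``differentiate under the integral sign and let $\bar\partial$ kill the integrand'' is circular as stated. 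The paper avoids this entirely with Lemma \ref{lemma: inners tra Lk e Rj costante sulla slice}: the cross term $\inner{L_{q_1},R_{\overline{q_2}}}$ collapses to $\frac 1 2 q_1(\beta_{\overline{q_2}})$, a pairing of objects that do not move along the two Bers slices through the base point, hence it is \emph{constant} there, and holomorphicity of $\inner{X,Y}$ for holomorphic $X,Y$ follows with no differentiation under any integral. To repair your argument you would either prove this constancy, or justify the parameter-holomorphy of the metric and of the representatives independently (e.g.\ by Ahlfors--Bers holomorphic dependence for the Beltrami equation applied to item (4) of Remark \ref{rmk: conti su h+q}, together with holomorphy of the chart inverses); neither is supplied by the lemma you cite.
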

	By uniqueness of the holomorphic extension (see Corollary \ref{cor: the metric is unique}), this coincides with the holomorphic extension introduced in \cite{LoustauSanders}.

Explicitly, $\inners_g$ can be seen as follows:
\begin{enumerate}
	\item $\inner{q_1, q'_1}_g=0$, for all $q_1, q'_1\in \HQD(c_1)$.
	\item $\inner{\overline{q_2}, \overline{q_2}'}_g=0$ for all $\overline {q_2}, \overline{q_2}'\in \HQD(\overline{c_2})$\ .
	\item Let $g=\varrho dzd\overline w$, $q_1=\phee dz^2 \in \HQD(c_1)$, $\overline{q_2}=\overline{\psi} d\overline w^2\in \HQD(\overline{c_2})$, then
	\[
	\inner{q_1, \overline{q_2}}_g=\frac 1 4  i \int_S \frac{\phee\cdot \overline{\psi}}{\varrho}dz\wedge d\overline w	\]
\end{enumerate}

As a consequence we get an integral description of the Goldman symplectic form $\omega_G$ on $\QF(S)$ given by 
\[
\omega_G(q_1, \overline{q_2}) _{|([c_1], [\overline{c_2}])}= 2 i \int_S \frac{\phee\cdot \overline{\psi}}{\varrho}dz\wedge d\overline w	 
\] 
with Bers slices being Lagrangian (see Proposition \ref{prop: Goldman symp}).

\subsection{Bounds for the Schwarzian of Bers projective structures}

The maps $\mathbf{Schw}_+$ and $\mathbf{Schw}_-$ defined above in \eqref{eq: schw+ intro} and \eqref{eq: schw- intro} descend to maps from $\QF(S)$, namely 
\begin{equation*}
	\begin{split}	
		Schw_+\colon \QF(S) &\to  \HQD(S)\\
		([c_1], [\overline{c_2}])&\mapsto [Schw(\sigma_1(c_1,\overline{c_2}) )]
	\end{split}
\qquad \qquad
	\begin{split}	
	Schw_-\colon \QF(S) &\to  \HQD(\overline S)\\
	([c_1], [\overline{c_2}])&\mapsto [Schw(\overline{\sigma_2}(c_1, \overline{c_2}) )]
\end{split}\ 
\end{equation*}
where $[\mathbf{Schw}_+(g(c_1, \overline{c_2}))]= Schw_+([c_1], [\overline{c_2}])$ and $[\mathbf{Schw}_-(g(c_1, \overline{c_2}))]= Schw_-([c_1], [\overline{c_2}])$

Both maps are known to be biholomorphisms onto their images (see for instance \cite{dumasprojective}).
In particular, for all given $([c_1], [\overline{c_2}])\in \mathcal T(S)\times \mathcal T(\overline S)$, 
\begin{align*}
	\mathcal S^+_{[c_1]}= Schw_+\Big(\{[c_1]\} \times \mathcal T(\overline S)\Big)\subset \HQD([c_1])\\ \mathcal S^-_{[\overline{c_2}]}:=  Schw_-\Big( \mathcal T(S) \times \{[\overline{c_2}]\} \Big)\subset \HQD([\overline{c_2}])
\end{align*}
are open subsets. 

The shape of these open subsets is not clear in general. By the classic Kraus-Nehari theorem (\cite{kraus1932zusammenhang}, \cite{Nehari}), we know that $	\mathcal S^+_{[c_1]}$ (resp. $\mathcal S^-_{[\overline{c_2}]}$) contains the ball of radius $\frac 1 2$ and is contained in the ball of radius $\frac 3 2$ centered in zero inside $\HQD([c_1])$ (resp. $\HQD([\overline{c_2}]$) with respect to the $L^\infty$-norm given by the hyperbolic metric. By the works of Lempert \cite{Lempert} and Markovic \cite{Markovic}, $\mathcal T(S)$ cannot be biholomorphic to a convex domain in $\C^{6\mathrm g-6}$, hence $\mathcal S^+_{[c_1]}$ and $\mathcal S^-_{[\overline{c_2}]}$ are not convex.

The metric formalism for $T\QF(S)$ allows to give explicit lower bounds on the radius of balls inside $\mathcal S^+_{[c_1]}$ and $\mathcal S^-_{[\overline{c_2}]}$, as shown in the following Theorem (Corollary \ref{cor: ball bound for schw} in the text, with finer estimates in Lemma \ref{lemma: bound qF} and Theorem \ref{thm: bound qF}).

\begin{alphatheorem}
	\label{thm: bound intro}
	Let $\ccpair \in \CSCS$, let $g=g(c_1,\overline{c_2})$ denote the corresponding Bers metric, and let $g_0=g(c_1, \overline{c_1})$ be the Riemannian hyperbolic metric in the conformal class of $c_1$.
	Let 
\begin{equation}
	\label{eq: def R intro}
0< R:= \frac 1 2 \min_S\  \left(  \Big(1- \left|\frac{\partial_{\overline z} w}{\partial_z w} \right| \Big) \left| \frac{dA_g }{dA_{g_0}} \right| \ \right)
\end{equation}
where $z$ and $\overline w$ are any local coordinates for $c_1$ and $\overline{c_2}$ respectively and $dA_g$ and $dA_{g_0}$ are the area forms of $g$ and $g_0$. 
	Then, \[B_{\infty}\Big(Schw_+([c_1], [\overline{c_2}]), R\Big)\subset \mathcal S^+_{[c_1]}\, \]
where $B_{\infty}$ denotes a ball for the $L^{\infty}$-norm (with respect to the hyperbolic metric) on $\HQD([c_1])$.

\end{alphatheorem}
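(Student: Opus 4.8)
The plan is to combine the Schwarzian deformation formula (Theorem \ref{thm: schwartzian}) with a quantitative version of the construction behind Theorem \ref{thm intro A}. Since $\mathbf{Schw}_+(g+q_1)=\mathbf{Schw}_+(g)-\tfrac12 q_1$ whenever $g+q_1$ is a Bers metric, and the class of $\mathbf{Schw}_+(g)$ in $\HQD([c_1])$ is $Schw_+([c_1],[\overline{c_2}])$, the assignment $q_1\mapsto-\tfrac12 q_1$ carries admissible deformations to points of $\mathcal S^+_{[c_1]}$ centered at $Schw_+([c_1],[\overline{c_2}])$. Concretely, a point $Schw_+([c_1],[\overline{c_2}])+r$ of $B_\infty(Schw_+([c_1],[\overline{c_2}]),R)$ is the image of $q_1=-2r$, which has $\|q_1\|_\infty=2\|r\|_\infty<2R$. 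Hence it suffices to prove that $g+q_1$ is a Bers metric with $c_+(g+q_1)=c_1$ for every $q_1\in\HQD(c_1)$ with $\|q_1\|_\infty<2R$: then $[g+q_1]$ lies in the Bers slice $\{[c_1]\}\times\mathcal T(\overline S)$, so $Schw_+([g+q_1])=Schw_+([c_1],[\overline{c_2}])+r$ lies in $\mathcal S^+_{[c_1]}$, and the factor $\tfrac12$ in $R$ exactly absorbs the $\tfrac12$ in the deformation formula.

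For the analytic step, fix local coordinates $z$ for $c_1$ and $\overline w$ for $\overline{c_2}$, and write $q_1=\phi\,dz^2$, $g_0=\rho_0\,dz\,d\overline z$. A direct computation from \eqref{eq: da cscs a Bers metrics} gives $g=\lambda\,dz\,d\overline w$ with $\lambda=-4(df_1/dz)(d\overline{f_2}/d\overline w)/(f_1-\overline{f_2})^2$; expanding $d\overline w=\overline{\partial_{\overline z}w}\,dz+\overline{\partial_z w}\,d\overline z$ in the $z$-frame yields
\[
g=A_g\,dz^2+B\,dz\,d\overline z,\qquad A_g=\lambda\,\overline{\partial_{\overline z}w},\quad B=\lambda\,\overline{\partial_z w},
\]
with vanishing $d\overline z^2$-coefficient. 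Adding $q_1$ changes only the $dz^2$-coefficient, $g+q_1=(A_g+\phi)\,dz^2+B\,dz\,d\overline z$, so its determinant $-B^2/4$ is unchanged and $g+q_1$ stays nondegenerate; moreover the $d\overline z^2$-coefficient is still zero, which is the intrinsic signature of $c_+=c_1$. Using the curvature computation underlying Theorem \ref{thm intro A}, any nondegenerate form of this shape has constant curvature $-1$, so the only remaining condition for being a Bers metric is positivity.

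Next I would read off the positivity condition from the intrinsic description in Section \ref{subsection hyperbolic complex metrics}. The form $A\,dz^2+B\,dz\,d\overline z$ has null directions $\partial_{\overline z}$ and $\partial_z-(A/B)\partial_{\overline z}$, and it is a positive complex metric exactly when the finite null direction lies in the unit disk, i.e. $|A|<|B|$; this is consistent with $g$ being Bers, since $|A_g/B|=|\overline{\partial_{\overline z}w}/\overline{\partial_z w}|=|\partial_{\overline z}w/\partial_z w|<1$. Translating the defining quantities, $dA_g/dA_{g_0}=B/\rho_0$ and $|A_g/B|=|\partial_{\overline z}w/\partial_z w|$, whence
\[
\Big(1-\big|\tfrac{\partial_{\overline z}w}{\partial_z w}\big|\Big)\Big|\tfrac{dA_g}{dA_{g_0}}\Big|=\frac{|B|-|A_g|}{\rho_0},\qquad 2R=\min_S\frac{|B|-|A_g|}{\rho_0}.
\]
If $\|q_1\|_\infty=\sup_S|\phi|/\rho_0<2R$ (with the normalisation of \eqref{eq: norm infty on HQD}), then $|\phi|<|B|-|A_g|$ pointwise, so $|A_g+t\phi|\le|A_g|+t|\phi|<|B|$ for all $t\in[0,1]$. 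Thus the segment $t\mapsto g+t\phi\,dz^2$ consists of positive complex metrics of constant curvature $-1$ joining the Bers metric $g$ to $g+q_1$; since Bers metrics are the connected component of the Fuchsian locus, $g+q_1$ is Bers, and by the first paragraph the theorem follows. This also shows the bracketed quantity is the genuine function $(1-|\mu_{\mathrm{id}}|)\,|dA_g/dA_{g_0}|$ on $S$, and that compactness forces $R>0$.

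The main obstacle is the third paragraph: pinning down the exact positivity inequality $|A|<|B|$ from the intrinsic definition of positive complex metrics, and confirming that adding a $c_1$-holomorphic quadratic differential preserves constant curvature $-1$ while keeping $c_+=c_1$, so that pointwise positivity is the sole obstruction. Once this is isolated, the estimate is elementary and the constants match.
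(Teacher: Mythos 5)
Your proposal is correct and follows essentially the same route as the paper's proof (Lemma \ref{lemma: bound qF}, Theorem \ref{thm: bound qF}, Corollary \ref{cor: ball bound for schw}): use Theorem \ref{thm: schwartzian} to reduce membership of $Schw_+([c_1],[\overline{c_2}])+r$ in $\mathcal S^+_{[c_1]}$ to showing that $g-2r$ is a Bers metric, characterize this by the pointwise positivity inequality $|\rho\partial_z\overline w+\phee|<|\rho\partial_{\overline z}\overline w|$ (your $|A|<|B|$), and conclude with the triangle inequality after identifying $2R=\min_S\,(|B|-|A_g|)/\rho_0$. Your derivation of the positivity criterion from the location of the second isotropic direction, and your segment argument invoking Theorem \ref{thm: Bers is connected component}, are the same ingredients the paper packages into Lemma \ref{lemma: bound qF}.
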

For the case $c_1={c_2}$ we get $R=\frac 1 2$, which coincides with Kraus' Theorem in \cite{kraus1932zusammenhang}.
	
 Observe that the definition of $R$ in $\eqref{eq: def R intro}$ depends on the choice of $c_1$ and $\overline{c_2}$ in their isotopy classes $[c_1]\in \mathcal T(S)$, $[\overline{c_2}]\in \mathcal T(\overline S)$: in fact it would be interesting to determine for each $c_1\in \mathcal C(S)$ what choice of $\overline{c_2}$ in its isotopy class $[\overline{c_2}]$ maximizes the estimate $R$ (also see Remark \ref{rmk: sharpness of R}).

\section*{Aknowledgements}
We thank the referee for their valuable suggestions, in particular for the alternative proof of Theorem \ref{thm: schwartzian}, which improved its clarity and smoothness.

We are grateful to Francesco Bonsante for some useful technical advice and to Andrea Tamburelli, Andrea Seppi, and Filippo Mazzoli for inspiring conversations on immersions into $\SL(2,\C)$ that indirectly influenced this work. We also thank Jean-marc Schlenker for his mini-course at the University of Luxembourg, which provided helpful background, and Nathaniel Sagman for his suggestions on the state of the art

\section*{Funding}
The author has been supported by the FNR OPEN grant CoSH (O20/14766753/CoSH).

\section{Essential background and notation}
\label{sec: essential background}

Throughout the whole paper, $S$ denotes a closed oriented surface of genus $\mathrm g\ge 2$. We will use $\overline S$ to denote the surface with opposite orientation.

\subsection*{Notation} Geometric structures (such as complex structures, Bers metrics, quadratic differentials, and tensors in general) on $S$ will often be identified with their $\pi_1(S)$-invariant lifts on the universal covering space $\widetilde S$.

\subsection{The Teichmüller space}
\label{introsection Teich space}

Let $\mathcal C(S)$ denote the \textbf{space of complex structures} on $S$ that are compatible with its orientation. 

Let $\mathrm{Diff}_+(S)$ denote the group of orientation-preserving diffeomorphisms of $S$, let $\mathrm{Diff}_0(S)$ denote the subgroup of  $\mathrm{Diff}_+(S)$ consisting of diffeomorphisms of $S$ that are isotopic to the identity, and define the \textbf{mapping class group} of $S$ as $MCG(S):=\nicefrac{\mathrm{Diff}_+(S)}{\mathrm{Diff}_0(S)}$. Elements in $\mathrm{Diff}_+(S)$ will be often identified with their $\pi_1(S)$-invariant lifts to the universal cover $\widetilde S$.

The \textbf{Teichmüller space} $\mathcal T(S)=\nicefrac{\mathcal C(S)}{\mathrm{Diff}_0(S)}$ is the space of complex structures on $S$ up to isotopy. We denote the equivalence class of $c$ with $[c]$. The mapping class group $MCG(S)$ acts naturally on $\mathcal T(S)$.

We say that a representation $\rho\colon \pi_1(S)\to \PSL(2,\R)$ is (orientation-preserving and) \textbf{Fuchsian} if it is discrete and faithful and its extension to the boundary $\partial \pi_1(S)\cong \partial \widetilde S\to \partial {\Hyp^2}$ is an orientation-preserving homeomorphism. The \textbf{Fuchsian space} $\mathrm{Fuch}(S)$ is the space of conjugacy classes of Fuchsian representations by elements in $\PSL(2,\R)$. 

The Uniformization Theorem provides a correspondence between $\mathcal T(S)$ and the \textbf{Fuchsian space} $\mathrm{Fuch}(S)$, obtained as follows.  $H\subset \C$ denoting the upper half-plane, for all $c\in \mathcal C(S)$ there exists a biholomorphism $(\widetilde S, c) \to H$ which is unique up to conjugation with elements of $\PSL(2,\R)$, and which is equivariant for a Fuchsian $\rho\colon \pi_1(S)\to \PSL(2,\R)$: the map sending $c$ to the conjugacy class of $\rho$ descends to a bijection between $\mathcal T(S)$ and $\mathrm{Fuch}(S)$ called \textbf{holonomy map} which assigns by pull-back a smooth structure on $\mathcal T(S)$.

By pulling-back the hyperbolic metric of $\Hyp^2$ in the upper-half plane model through the uniformizing map, we get a bijection between $\mathcal T(S)$ and the space $Met_{-1}(S)$ of hyperbolic metrics on $S$ up to isotopy, whose inverse is just the map sending a Riemannian metric to its conformal class.

\subsection{Beltrami coefficients}
\label{introsection Beltrami}

The content of Sections \ref{introsection Beltrami} and \ref{introsection T and T*} are well-known to experts. There are several surveys on these topics, such as \cite{Otalhandbook},\cite{ahlfors2006lectures},\cite{farb2011primer},\cite{hubbard2016teichmuller},\cite{petri2019teichmuller}, \cite{Morrey}, and many more.
We recall here some essential aspects of Beltrami equations and Beltrami differentials leading to the description of the tangent bundle to Teichmüller space.
\vspace{5pt}

Fix $c_0\in \mathcal C(S)$, let $\rho_0$ be its Fuchsian holonomy, and denote $\Gamma=\rho_0(\pi_1(S))\subset \PSL(2,\R)$.

We define the subspace  ${L^\infty}(\Gamma)\subset L^{\infty}(H,\C)$ given by Lebesgue-integrable functions $\mu\colon H\to \mathbb C$ such that $\mu(\gamma (z))= \frac{\overline{\gamma'(z)}}{\gamma'(z)} \mu(z)$ for all $\gamma\in \Gamma$: in other words, the section $\mu \frac{d\overline z}{dz}$ of $\mathbb CT^*S\otimes \mathbb CT^{**}S$ (or equivalently $\mu\ {d\overline z}\otimes\partial_z$ in the identification $\mathbb CT^{**}S\cong \mathbb CTS$) is $\Gamma$-invariant on $\widetilde S$. Also denote $L^\infty_1(\Gamma)$ as the intersection of ${L^\infty}(\Gamma)$ with the unit ball of $L^{\infty}(H,\C)$. 
Observe that $L^\infty(\Gamma)$ can be seen as the tangent space in zero of $L^\infty_1(\Gamma)$: we call $\Gamma$-\textbf{Beltrami coefficients} (or just Beltrami coefficients) elements in $L^\infty_1 (\Gamma)$ and denote them with $\mu$, while we call $\Gamma$-\textbf{Beltrami differentials} (or just Beltrami differentials) elements in $L^\infty(\Gamma)$ and denote them with $\beta=\mu\frac{d\overline z}{dz}$.

By the classic theory of Beltrami equations (e.g. see \cite{Morrey}, \cite{Otalhandbook}), for all $\mu$ such that $\|\mu\|_{\infty}<1$, there exists a unique quasi-conformal map $f^{\mu}\colon  \C\to  \C$ such that: 
\begin{itemize}[noitemsep,topsep=0pt]
	\item $\frac{\partial f^{\mu}}{\partial \overline z}= \mu \frac{\partial f^{\mu}}{\partial z}$ almost everywhere on $H$,
	\item the restriction of $f^{\mu}$ to $(-H)$ is holomorphic,
	\item $f^{\mu}(0)=0$ and $f^{\mu}(1)=1$ 
\end{itemize}
Moreover, $f^{\mu}$ is equivariant for a discrete and faithful representation $\rho\circ \rho_0^{-1}\colon \Gamma\to \PSL(2,\mathbb C)$, with $\rho\colon \pi_1(S)\to \PSL(2,\mathbb C)$ acting freely and properly discontinuously on $f^{\mu}(H)$. Hence $f^{\mu}$ induces a quasi-conformal map between the Riemann surfaces $f^\mu\colon\nicefrac{H}{\Gamma}\to \nicefrac{f^{\mu}(H)}{(\rho\circ \rho_0^{-1})(\Gamma)}$. 

Define $\mathrm{Belt}_1(\rho_0)$ as the quotient of $L^\infty_1(\Gamma)$ defined by the relation $\mu\sim \mu'$ equivalently if (with reference to the construction above):
\begin{itemize}[noitemsep,topsep=0pt]
	\item $f^\mu$ and $f^{\mu'}$ are equivariant for the same representation $\rho\colon \pi_1(S)\to \PSL(2,\C)$;
	\item there exists an element $\phee\in \mathrm{Diff}_0(S)$ such that $f^{\mu'}=f^{\mu}\circ \phee$;
	\item the pull-back complex structures on $S$ defined by $f^\mu$ and $f^{\mu'}$ are isotopic.
\end{itemize} 

The last identification provides a map $\mathrm{Belt}_1(\rho_0) \xrightarrow{\sim} \mathcal T(S)$ which is in fact a diffeomorphism.

Under this identification, the tangent space $T_{[c_0]}\mathcal T(S)$ can be seen as the quotient of $L^\infty(\Gamma)$ with the kernel of the differential map of the projection $L^\infty_1(\Gamma)\to \mathrm{Belt}_1(\rho_0)$, as we describe in the next section.

\subsection{Tangent and cotangent bundle to Teichmüller space}
\label{introsection T and T*}

Let $c\in \mathcal C(S)$. A \textbf{holomorphic quadratic differential} on $(S,c)$ is a holomorphic section of the square of the holomorphic cotangent bundle. We will denote the complex vector space of holomorphic quadratic differentials for $c$ with $\HQD(c)$. One can see (the lift to the universal cover of) a holomorphic quadratic differential on $S$ as a $\pi_1(S)$-invariant tensor $\phee dz^2$ on $\widetilde S$, where $z$ is a global holomorphic coordinate for (the lift of) $c$ on $\widetilde S$, and $\phee\colon \widetilde S\to \mathbb C$ is $c$-holomorphic.
We also denote $\HQD([c])=\faktor{\bigcup_{c\in [c]} \HQD(c)}{\mathrm{Diff}_0(S)}$, which is naturally a complex vector space with the natural identification $\HQD(c)\cong\HQD([c])$. By Riemann-Roch Theorem, $\HQD([c])$ has complex dimension $3\mathrm g-3$.

Recall the notation of the previous paragraph. The identification $\mathrm{Belt}_1(\rho_0)\xrightarrow{\sim} \mathcal T(S)$ defined above actually allows to give a description of the tangent and cotangent space to $\mathcal T(S)$ in $[c_0]$ as follows. 

Let $\rho_0$ be the Fuchsian representation corresponding to $c_0\in \mathcal C(S)$, and regard ${L^\infty}(\rho_0(\pi_1(S)))$ as the tangent space to $L^\infty_1(\rho_0(\pi_1(S)))$ in zero.

For all $q=\phee dz^2\in \HQD(c_0)$, $\beta=\mu \frac{d\overline z}{dz}\in {L^\infty}(\rho_0(\pi_1(S)))$, the 2-form $\phee \mu dz\wedge d\overline z$ on $\widetilde S$ is $\pi_1(S)$-invariant, so one can consider the pairing defined by
\begin{equation}
	\label{eq: pairing WP}
q(\beta):=\int_S \phee \cdot \mu \ \frac i 2 dz\wedge d\overline z \ .
\end{equation}
The vector subspace \[
\mathcal N=\left \{ \beta\in {L^\infty}(\rho_0(\pi_1(S))) \ |\ q(\beta)=0 \text{ for all } q\in \HQD(c_0) \right \}.
\] 
is precisely the kernel of the differential of the quotient map ${L^\infty_1}(\rho_0(\pi_1(S)))\to \mathrm{Belt}_1(\rho_0)$ in zero.

As a result, the tangent space in $[c_0]$ to Teichmüller space, canonically identified with $T_{[0]}\mathrm{Belt}_1(\rho_0)$, can be seen as the finite vector space
\[
T_{[c_0]}\mathcal T(S)\cong \mathrm{Belt}([c_0]):= \faktor{{L^\infty}(\rho_0(\pi_1(S)))}{\mathcal N}\ .
\]

Finally, denote by $h_0$ the hyperbolic metric in the conformal class of $c_0$. The map
\begin{align*}
	\HQD(\overline{c_0}) &\to \mathrm{Belt}([c_0])\\
	\overline q &\mapsto \left[ \frac {\overline q} {h_0}\right]= \left[ \frac{\overline \phee d\overline z}{\varrho dz} \right]
\end{align*}
where $h_0=\varrho dz d\overline z$, is a linear isomorphism \cite{Otalhandbook}, and it gives a nice set of representatives for the elements of $\mathrm{Belt}([c_0])$. The Beltrami differentials of the form $\frac {\overline q} {h_0}$ as above are called \textbf{harmonic Beltrami differentials}.

Since $q(\frac{\overline q}{h_0})>0$ for all $q\in \HQD(c_0)$, we get that the pairing on $\HQD([c_0])\times\mathrm{Belt}([c_0])$ given by $q([\beta]):=q(\beta)$ as in Equation \eqref{eq: pairing WP} is well-defined and non-degenerate, thus we get the identification $T^*_{[c_0] }\mathcal T(S)\cong\HQD([c_0])$.

Both $\HQD([c_0])$ and $\mathrm{Belt}([c_0])$ are naturally complex vector spaces, providing almost-complex structures on $T \mathcal T(S)$ and consistently on $T^*\mathcal T(S)$. As we will see in Section \ref{introsection QF}, this almost-complex structure is integrable, and $\mathcal T(S)$ has a natural structure of a complex manifold.

We will denote by $\textbf{HQD}(S)$ the space of quadratic differentials which are holomorphic with respect to some complex structure on $S$ compatible with the orientation (this is an infinite-dimensional Banach manifold) and with $\HQD(S)= \faktor{\textbf {HQD}(S)}{\mathrm{Diff}_0(S)}$ the vector bundle of rank $6\mathrm g-6$ on $\mathcal T(S)$ which is isomorphic to its cotangent bundle.

\subsection{Complex projective structures}
\label{introsection proj structures}

The complex Lie group $\PSL(2,\C)$ acts by isometries on the $3$-dimensional hyperbolic space $\Hyp^3$ and on its visual boundary at infinity $\CP$ by Möbius maps. 

A \textbf{(complex) projective structure} on $S$ is a $(\PSL(2,\C),\CP)$-structure, namely a maximal atlas of charts to open subsets of $\CP$ with changes of coordinates being restrictions of Möbius maps. By the classic theory of $(G,X)$-structures, this is equivalent to giving a \emph{developing map} $dev\colon \widetilde S\to \CP$ that is equivariant for some representation $\rho\colon \pi_1(S)\to \PSL(2,\C)$, called the \emph{holonomy} of the projective structure. A nice survey on complex projective structures is \cite{dumasprojective}. 

The space of projective structures up to isotopy, that we will denote by $\mathcal P(S)$, is a complex manifold, and the forgetful map $\mathcal P(S)\to \mathcal T(S)$ that maps a projective structure to the complex structure induced by the atlas is smooth (in fact holomorphic as we see in the next section).

Let $\mathcal X(\pi_1(S), \PSL(2,\C))$ denote the $\PSL(2,\C)$-\textbf{character variety}, which is the GIT quotient (or, from the topological viewpoint, the greatest Hausdorff quotient) of the space of conjugacy classes of representations in $\PSL(2,\C)$, namely $\faktor{\{\rho\colon \pi_1(S)\to \PSL(2,\C)\}}{\PSL(2,\C)}$. The character variety $\mathcal X(\pi_1(S), \PSL(2,\C))$ is naturally a complex algebraic variety, and the holonomy map
\begin{equation}
	\label{eq: holonomy of projective structures is a local biholo}
	\begin{split}
hol\colon	\mathcal P(S)&\to \mathcal X(\pi_1(S), \PSL(2,\C))\\
	[(dev,\rho)]&\mapsto [\rho]
	\end{split}
\end{equation}
is a local biholomorphism.

A projective structure on $S$ can be encoded in a tensor on $S$ by taking the Schwarzian derivative of the developing map, let us define it. Given an open subset $\Omega\subset \C$ and a local biholomorphism $f\colon \Omega\to\C$, we denote its \textbf{Schwarzian derivative} as the holomorphic quadratic differential 
\[
Schw(f):= \left( \left(\frac{f''}{f'}\right)'-\frac 1 2 \left(\frac{f''}{f'} \right)^2  \right) dz^2\ .
\] 
The key properties of the Schwarzian derivative are that $Schw(f)=0$ if and only if $f$ is Möbius, and the chain rule $Schw(f\circ g)=g^*(Schw(f)) +Schw(g)$. 

Given a projective structure $(f,\rho)$ on $S$, denoting by $c$ its complex structure, the properties of the Schwarzian derivative allow to say that $Schw(f)$ is a $\pi_1(S)$-invariant holomorphic quadratic differential on ($\widetilde S$,c), hence an element of $\HQD(c)$. 
Denoting by $\mathcal P([c])$ the isotopy classes of projective structures with complex structure in $[c]$, the map
\begin{equation}
	\label{eq: fiber parametrization of P(S)}
	\begin{split}
	\mathcal P([c]) &\to \HQD([c])\\
	[(f,\rho)]&\mapsto [Schw(f)] 
	\end{split}\ 
\end{equation}
is a biholomorphism.

In the paper, we will need the notion of osculating map, which we recall here. 

Let $(f,\rho)$ and $(\widehat f, \widehat\rho)$ be two complex projective structures inducing the same complex structure $c$ on $S$. We define the \textbf{osculating map} for $\widehat f$ with respect to $f$ as the map $\Phi_{\bullet}\colon \widetilde S\to \PSL(2,\C)$ uniquely defined by the fact that $\widehat {f}$ and $\Phi_p\circ f$ have the same (holomorphic) 2-jet in $p\in \widetilde S$, namely
\begin{align*}
\widehat{f}(p)&=\Phi_p(f(p))\\
\widehat{f}\ '(p) &= (\Phi_p \circ f)'(p)\\
\widehat{f}\ ''(p) &= (\Phi_p \circ f)''(p)
\end{align*}
where the derivatives are taken with respect to any local $c$-holomorphic chart on $S$ and any complex affine chart on $\CP$ containing $\widehat f(p)$. Equivalently, $\Phi_p$ can be characterized as having the same 2-jet as $\widehat f\circ f^{-1}$ in a neighborhood of $f(p)$ over which $f$ admits a local inverse. 
One can easily prove that, for all $\gamma\in \pi_1(S)$, $\Phi_{\gamma(p)}\circ \rho(\gamma)= \widehat \rho(\gamma) \circ \Phi_p$ by applying the definition of $\Phi$ to show that the right-hand and left-hand sides are M\"obius maps with the same 2-jet in $p$. The derivative of the osculating map contains the data of the variation of the Schwarzian derivative in the following sense. For all $p\in \widetilde S$, $(\Phi(p))^{-1} (d\Phi|_{p})$ is a linear map from $\C T_pS$ to the Lie algebra of $\PSL(2,\C)$, and, by taking a complex affine chart for $\CP$ around $f(p)$ to see $f$ as a $\C$-valued local holomorphic chart, we locally have 
\begin{equation}
 \label{eq: derivative osculating}
(\Phi(p))^{-1} (d\Phi|_{p})= \frac 1 2 \begin{pmatrix}
    - f & f^2 \\
    -1 & f
\end{pmatrix} \phee df\ ,
\end{equation}
where $\phee (df)^2=Schw(\widehat f)-Schw(f)$ is the difference between the Schwarzian derivatives.

\subsection{Quasi-Fuchsian representations and Bers' Simultaneous Uniformization}
\label{introsection QF}

Let $\rho\colon \pi_1(S)\to \PSL(2,\C)$ be a representation. We define its \textbf{limit set} $\Lambda_\rho$ as the topological boundary of the orbit of any point in $\CP$. 

We say that a discrete and faithful representation $\rho\colon \pi_1(S)\to \PSL(2,\C)$ is \textbf{quasi-Fuchsian} if its limit set is a Jordan curve on $\CP$. The \textbf{quasi-Fuchsian space} is the space of conjugacy classes of quasi-Fuchsian representations, namely
\[
\QF(S)=\faktor{\{\rho\colon \pi_1(S)\to \PSL(2,\C) \text{ quasi-Fuchsian representation}\}}{\PSL(2,\C)}
\]
which is an open smooth subset of the character variety $\mathcal X(\pi_1(S),\PSL(2,\C))$, hence a smooth complex manifold.

Let $\rho$ be a quasi-Fuchsian representation. The orientation on $S$ induces an orientation on $\partial \pi_1(S)$ and hence on $\Lambda_\rho$, which in turn gives a natural orientation to the connected components of $\CP\setminus\Lambda_\rho$: one of the two discs, that we name $\Omega_\rho^+$, is therefore identified with the complex disk $\mathbb D$, while the other one, that we name $\Omega_\rho^-$, is identified with $\overline{\mathbb D}$, the complex disk with opposite orientation. The representation $\rho$ defines a free and properly discontinuous action of $\pi_1(S)$ on both $\Omega_\rho^+$ and $\Omega_\rho^-$, defining two Riemann surfaces on $S$ and $\overline S$ respectively. 

Bers' Simultaneous Uniformization Theorem, as stated in Theorem \ref{Bers Theorem}, allows to build a diffeomorphism
\begin{equation}
	\label{eq: Bers intro}
\mathfrak B\colon \mathcal T(S)\times\mathcal T(\overline S)\xrightarrow{\sim} \QF(S),
 \end{equation}
(that in the rest of the paper will be often seen as an identification) associating to every pair $([c_1],[\overline{c_2}])$ a unique $[\rho]\in \QF(S)$ such that there exists a (essentially unique) projective structure $[(\sigma_1,\rho)]\in \mathcal P(S)$ with $\sigma_1(\widetilde S)=\Omega_\rho^+$ and induced complex structure $[c_1]$, and a (essentially unique) projective structure $[(\overline{\sigma_2}, \rho)]\in \mathcal P(\overline S)$ with $\overline{\sigma_2}(\widetilde S)=\Omega_\rho^-$ and induced complex structure $[\overline {c_2}]$. 
The diffeomorphism $\mathfrak B$ in Equation \eqref{eq: Bers intro} is in fact a biholomorphism in the following sense. The map $\mathfrak B$ maps each $\mathcal T(S)\times \{[\overline{c_2}]\}$ (resp. each $\{[c_1]\}\times\mathcal T(\overline S)$) to a complex submanifold.  The complex structure on $\mathcal T(S)$ (resp. $\mathcal T(\overline S)$) defined by pull-back by $\mathfrak B$ is independent from the choice of $[\overline{c_2}]\in \mathcal T(\overline S)$ (resp. $[c_1]\in \mathcal T(S)$), and it is consistent with the almost complex structure on $\mathcal T(S)$ mentioned in the end of Section \ref{introsection T and T*}. 

We call \textbf{Bers slices} both the subsets of the form
\begin{align*}
	\QF([c_1],\bullet):=\{[c_1]\}\times \mathcal T(\overline S)\ ,\\
	\QF(\bullet, [\overline{c_2}]):=\mathcal T(S)\times\{[\overline{c_2}]\}\ ,
\end{align*} and their images through $\mathfrak B$.

The projective structures $[(\sigma_1,\rho)]$ and $[(\overline{\sigma_2}, \rho)]$ defined from $\mathfrak B^{-1}([\rho])$ are called \textbf{Bers projective structures}.

Teichmüller space $\mathcal T(S)$ is biholomorphic to an open subset of $\mathbb C^{3\mathrm g-3}$. With respect to this complex structure, the projections $\mathrm{Belt(S)}\to \mathcal T(S)$ (with fibers $\mathrm{Belt([c])}$) and $\HQD(S)\to\mathcal T(S)$ have a natural structure of holomorphic vector bundles and are isomorphic to the holomorphic tangent and cotangent bundles respectively. The forgetful map $\mathcal P(S)\to \mathcal T(S)$ is holomorphic as well, and the action of the mapping class group $MCG(S)$ on $\mathcal T(S)$ is by biholomorphisms.

\subsection{The Weil-Petersson metric}
\label{introsection: WP metric}

The description through harmonic Beltrami differentials allows to define a Riemannian metric on Teichmüller space $\mathcal T(S)$, called the \textbf{Weil-Petersson metric}, as follows. Let $z$ be a global holomorphic coordinate for $c$ on $\widetilde S$, $q_1=\phee_1 dz^2$ and $q_2=\phee_2 dz^2$ denote elements in $\HQD(c)$ and $h_0=\varrho_0 dzd\overline z$ be the hyperbolic metric in the conformal class $c$. Then, for all $\left[\frac{\overline{q_1}}{h_0}\right], \left[\frac{\overline{q_2}}{h_0}\right]\in \mathrm{Belt}([c])$ the Weil-Petersson Riemannian metric is defined as
\[\langle{\left[\frac{\overline{q_1}}{h_0}\right], \left[\frac{\overline{q_2}}{h_0}\right]}\rangle _{WP} := Re\left(  q_1\left( \frac{\overline{q_2}}{h_0}\right)\right) =Re\left(\int_S \frac{ \phee_1 \cdot \overline{\phee_2}}{\varrho_0}\frac i 2 dz\wedge d\overline z\right) \ .
\]

The Weil-Petersson metric $\inners_{WP}$ and the complex structure determine on $\mathcal T(S)$ a $MCG(S)$-invariant Kähler manifold structure, together with the symplectic form
\[
\omega \left(\left[\frac{\overline{q_1}}{h_0}\right], \left[\frac{\overline{q_2}}{h_0}\right]\right):= \inner{\left[i\frac{\overline{q_1}}{h_0}\right], \left[\frac{\overline{q_2}}{h_0}\right]}_{WP} = Im\left(\int_S \frac{ \phee_1 \cdot \overline{\phee_2}}{\varrho_0}\frac i 2 dz\wedge d\overline z\right) \ .
\]

In the identification of $\mathcal T(S)$ with $\mathrm{Fuch}(S)$, the symplectic form $\omega$ coincides up to a multiplicative factor 8 with the \textbf{Goldman symplectic form} $\omega_G=8\omega$, a natural symplectic form arising in the character variety of a wide class of real and complex Lie groups $G$ (see \cite{GoldmanSymp}). When $G$ is a complex Lie group, the Goldman symplectic form is $\C$-bilinear and holomorphic with respect to the natural complex structure on the character variety. This class of Lie groups includes $\PSL(2,\C)$, and the restriction of its Goldman symplectic form to $\mathrm{Fuch}(S)$ coincides with the Goldman symplectic form for $\PSL(2,\R)$.

\subsection{Holomorphic Riemannian metrics}
\label{introsection: holo Riemannian metrics} 

We now recall some essential aspects of holomorphic Riemannian metrics, you can find a more detailed treatment in \cite{MeBonsante}.

Let $\mathbb M$ be a complex manifold. \textbf{Holomorphic Riemannian metrics} on complex manifolds can be seen as a natural $\C$-bilinear analog of pseudo-Riemannian metrics: in fact, a holomorphic Riemannian metric on $\mathbb M$ is a nowhere-degenerate holomorphic section of the space of symmetric $\mathbb C$-bilinear forms on the holomorphic tangent bundle. 

A trivial example is $\mathbb{C}^N$ with the holomorphic Riemannian metric on $T\mathbb C^N \cong \mathbb C^N \times \mathbb C^N$ defined by \[\inner{\underline v, \underline w}_{\mathbb C^N}=\sum_{k=1}^N v_k w_k.\] To see a couple of less trivial examples, observe that the metric above descends to the holomorphic Riemannian metric on the compact manifold $\faktor{\mathbb C^n}{\mathbb Z^n+i\mathbb Z^n}$, or consider the complex Killing form on a complex semisimple Lie group (see for instance \cite{MeBonsante} for $\SL(2,\C)$). Another example that we will mention in the paper is given by the space of oriented geodesics of $\Hyp^3$, which admits a unique $\PSL(2,\C)$-invariant holomorphic Riemannian metric of constant curvature $-1$ (see Section \ref{subsection hyperbolic complex metrics} and \cite{MeBonsante}).

The real and the imaginary part of a holomorphic Riemannian metric on $\mathbb M=\mathbb M^n$ are pseudo-Riemannian metrics of signature $(n,n)$. To see this, observe that if $(e_1,\dots e_n)$ is an orthonormal basis for a holomorphic Riemannian metric $\inners$ on $T_p\mathbb M$, then the bases $(e_1,\dots, e_n, ie_1, \dots, ie_n)$ and $(\sqrt i e_1, \dots, \sqrt i e_n, i\sqrt i e_1, \dots, i\sqrt i e_n)$ are orthonormal for $Re\inners$ and $Im\inners$ respectively.

Holomorphic Riemannian metrics come with a notion of \emph{Levi-Civita connection}: namely there exists a unique affine connection, 
\begin{align*}
D\colon \Gamma(T\mathbb M)&\to \Gamma(End_{\mathbb R}(T\mathbb M))\\
X&\mapsto D X
\end{align*}
being torsion free and compatible with the metric, namely $d\inner{X,Y}= \inner{DX, Y}+\inner{X, DY}$. The affine connection $D$ coincides with the Levi-Civita connection of both the real and the imaginary part of the metric.

The metric $\inners$ and its Levi-Civita connection determine a \emph{Riemann curvature tensor} $\mathrm R(X,Y,Z, W)= \inner{D_X D_Y Z -D_YD_X Z- D_{[X,Y]} Z, W}$ which is $\C$-multilinear.

Finally, let $V<T_p \mathbb M$ be a complex vector space with $\dim_\C V=2$ and such that the restriction of $\inners$ to $V$ is non-degenerate, namely there exist two $\C$-linearly independent vectors $X,Y\in V$ such that $\inner{X,Y}\ne 0$. Then, we can define the \emph{sectional curvature} $\mathrm K(V)$ as $\mathrm K(V):=\frac{\mathrm R(X,Y,Y,X)}{\inner{X,X}\inner{Y,Y}-\inner{X,Y}^2}$, whose definition is independent from the choice of the linearly independent vectors  $X,Y$ in $V$.

Similarly as for pseudo-Riemannian metrics, for all $n\ge 2$ and $k\in \mathbb C$ there exists a unique $n$-dimensional simply-connected, complete holomorphic Riemannian manifold of constant sectional curvature $k$ (\cite{MeBonsante}).

\section{Bers metrics}
\label{sec Bers metrics}

\subsection{Hyperbolic complex metrics and Bers Theorem}
\label{subsection hyperbolic complex metrics}
As we already mentioned, throughout the paper, we will assume that $S$ is a closed connected oriented surface of genus greater than or equal to $2$. We denote by $\overline S$ the surface with opposite orientation.

We need to recall some essential technical tools which will be used in some of the proofs of this paper. Most of the content of this subsection follows from \cite{MeBonsante}, with a few changes in the notation.
\vspace{5pt}

A \textbf{complex metric} on $S$ is a smooth section $g$ of $Sym_2(TS) + i Sym_2(TS)$ such that $g$ is non-degenerate, namely for all $p\in S$ the $\mathbb C$-bilinear extension of $g_p$ to the complexified tangent space $g_p\colon \mathbb C T_pS\times \mathbb C T_pS\to \mathbb C$ is a non-degenerate symmetric bilinear form. This notion includes Riemannian metrics on $S$.
We endow the set of complex metrics on $S$ with the $C^\infty$ topology.

With a simple linear algebra argument, one has that for each $p\in S$ the set $\{X\in\mathbb C T_pS\ |\ g_p(X,X)=0\}$ is the union of two complex lines in $\mathbb C T_pS$, corresponding to two points of 
$\mathbb P(\mathbb C T_pS )$. We will call these two points the \textbf{isotropic} directions of $g$, and we call isotropic vectors the vectors $X\in \C TS$ such that $g(X,X)=0$.

Observe that, for each $p\in S$, $\mathbb P(\mathbb C T_pS)\setminus \mathbb P (T_pS)$ is homeomorphic to a 2-sphere from which a great circle is removed, so it has two connected components homeomorphic to disks.

A \textbf{positive complex metric}  is a complex  metric $g$ such that:
\begin{itemize}[noitemsep,topsep=0pt]
\item there are no non-zero isotropic vectors on $TS$, namely $g(v,v)=0$ with $v\in TS$ if and only if $v=0$;
\item the two isotropic directions of $g$ in $\mathbb P (\mathbb C T_p S)$ are points in $\mathbb P(\mathbb C T_pS)\setminus \mathbb P(T_pS)$  that lie in different connected components.
\end{itemize}

One can see that a Riemannian metric $g$ is a positive complex metric: denoting by $z$ a complex coordinate around $p\in S$ for $g$, its isotropic directions are $Span_\C(\partial_z)$ and $Span_\C(\partial_{\overline z})$, which lie in different connected components of $\mathbb P(\mathbb C T_pS)\setminus \mathbb P(T_pS)$.

\vspace{5pt}

Every  complex metric $g$ has a unique \textbf{Levi-Civita connection}, namely a torsion-free affine connection $\nabla\colon \Gamma(\mathbb C TS)\to \Gamma( End(\mathbb C TS) )$ such that 
\begin{equation}
	\label{eq: metric compatibility complex metrics}
	d_X (g(Y, Z) )= g(\nabla_X Y, Z) + g(Y, \nabla_X Z)
\end{equation}
 for all $X, Y, Z\in \Gamma(\mathbb C TS)$. This induces the definition of a $\mathbb C$-multilinear curvature tensor $\mathrm R_g$ and a curvature $\mathrm K_g\colon S\to \mathbb C$.
\vspace{5pt}

Complex metrics of constant curvature $\mathrm K_g=-1$ have a particularly interesting geometric meaning in terms of immersions inside $\mathbb G=\CP\times \CP\setminus \Delta$, which can be interpreted as the space of (maximal, oriented, unparametrized) geodesics of $\Hyp^3$.

The complex manifold $\GG$ can be equipped with a holomorphic Riemannian metric (see Section \ref{introsection: holo Riemannian metrics}) that can be defined as follows: let $(U,z)$ be an affine chart for $\CP$, then in the chart $(U\times U\setminus \Delta, z\times z=(z_1,z_2))$ the metric can be written as 
\[
\inners_{\GG}= -\frac{4}{(z_1-z_2)^2}dz_1\cdot dz_2\ .
\]
This description is independent from the affine chart $(U,z)$. This holomorphic Riemannian metric is invariant under the diagonal action of $\PSL(2,\mathbb C)$ on $\GG$ and has constant sectional curvature $-1$. The isometry group $\Isom(\GG)$ of $\GG$ is generated by the diagonal action of $\PSL(2,\mathbb C)$ and by the diagonal swap $sw\colon(z_1, z_2)\mapsto (z_2, z_1)$. 

{We say that an immersion $\sigma\colon\widetilde S\to \mathbb G$ is totally real if, for all $p\in \widetilde S$, $Span_{\C} (d\sigma(T_p\widetilde S))= T_{\sigma(p)}\mathbb G$.}

\begin{theorem}[\cite{MeBonsante}]
	\label{thm: GC in G}
   {A complex metric $g$ on $\widetilde S$ has constant curvature $-1$ if and only if there exists a totally real immersion $\sigma\colon \widetilde S\to \mathbb G$ such that $g=\sigma^*\inners_{\mathbb G}$. This map $\sigma$ is unique up to post-composition with an element of $\Isom(\GG)$. 
    
    The metric $g$ is $\pi_1(S)$-invariant and defines a metric on $S$ if and only if $\sigma$ is $(\pi_1(S), \Isom(\GG))$-equivariant. 
    
    Finally, the metric $g$ is $\pi_1(S)$-invariant and positive if and only if $\sigma$ can be chosen to be $(\pi_1(S),\PSL(2,\C))$-equivariant and of the form $\sigma=(\sigma_1,\overline{\sigma_2})$, where $\sigma_1, \overline{\sigma_2}\colon \widetilde S\to \CP$ are, respectively, an orientation-preserving and an orientation-reversing local diffeomorphism.}
\end{theorem}

A class of equivariant immersions of $\widetilde S$ into $\GG$ is suggested by Bers' Simultaneous Uniformization Theorem. In the rest of the paper, we will use the notation as in its statement in Theorem \ref{Bers Theorem}.

Each $(c_1,\overline{c_2})\in\CSCS$ defines therefore a $\rho$-equivariant immersion 
\[
(\sigma_1\ccpair, \overline{\sigma_2}\ccpair)\colon \widetilde S \to \CP\times \CP\setminus \Delta=:\GG\ ,
\]
therefore the pull-back positive complex metric can be locally written as
\[
g(c_1, \overline{c_2})=(\sigma_1,\overline{\sigma_2})^*(\inners_\GG) = -\frac 4 {(\sigma_1-\overline{\sigma_2})^2} d\sigma_1\cdot d\overline{\sigma_2}\ ,
\]
defining a map
	\begin{equation}
	\label{eq: parametrization hyperbolic complex metrics}
	\begin{split}
	g\colon	\CSCS &\to \{\text{positive complex metrics}\} \\
		\ccpair&\mapsto g\ccpair :=  -\frac{4}{(\sigma_1- \overline{\sigma_2})^2} d\sigma_1\cdot d\overline {\sigma_2}
	\end{split}
\end{equation}
that is continuous with respect to the $C^{\infty}$ topology (in fact Fréchet-holomorphic as shown in \cite{ESholodependence})
\begin{defi}
	We call $\textbf{Bers metrics}$ the positive complex metrics obtained by this construction, namely the ones in the image of Equation $\eqref{eq: parametrization hyperbolic complex metrics}$.
	
	The $\textbf{holonomy}$ of the Bers metric $g=g(c_1,\overline{c_2})$ is the element $[g]:=\ccpairclass\in \QF(S)$. Equivalently it is the holonomy of the corresponding equivariant isometric immersion $\sigma\colon \widetilde S\to\mathbb G$ as in Theorem \ref{thm: GC in G}.
\end{defi}

Observe that, if $\overline{\sigma_2}=\overline{\sigma_1}$, with $\sigma_1$ onto the upper half plane $H\subset \mathbb C$, then  \[g(c_1, \overline {c_1})= \frac 1 {(Im(\sigma_1))^2}d\sigma_1\cdot d\overline{\sigma_1}\] is the hyperbolic Riemannian metric on $S$ in the conformal class of $c_1$.

\begin{theorem}[\cite{MeBonsante}]
	\label{thm: Bers is connected component and c+c-}
Every positive complex metric is conformal, through a unique factor $\lambda\colon S\to \mathbb C^*$, to a unique Bers metric, determining a continuous projection
\begin{equation}
\label{eq: mappa c+c-}
\begin{split}
(c_+, c_-)\colon \{\text{positive complex metrics}\} &\to \CSCS\\
\lambda \cdot g\ccpair \mapsto \ccpair\ .
\end{split}
\end{equation}

Moreover, the space of Bers metrics is an open connected component of the space of positive complex metrics of constant curvature $-1$, which in turn is an open subset of the space of complex metrics of constant curvature $-1$.
\end{theorem}

Let us now make a few remarks about Bers metrics that will help us handle them.

\begin{remark}
	\label{rmk: from g to c1c2}
	One can see the map $(c_+, c_-)$ from the isotropic directions of the $\mathbb C$-bilinear extension to $\mathbb C TS$, in the following way.
	
	Assume $v\in \mathbb CT_pS$, then observe that 
	\begin{align*}
		g(v,v) =0 &\iff -\frac{4}{(\sigma_1-\overline {\sigma_2})^2}d\sigma_1(v)d\overline{\sigma_2}(v)=0 \iff \\
		& \iff d\sigma_1(v)=0 \quad \text{or}\quad d\overline{\sigma_2}(v)=0 \ .
	\end{align*}
	Denote by $z$ and $\overline w$ the local coordinates for $c_1$ and $\overline {c_2}$ respectively. Then $d\sigma_1(v)=0$ if and only if $v\in Span_{\mathbb C} (\partial _{\overline z} )$, and $d\overline{\sigma_2}(v)=0$ if and only if $v\in Span_{\mathbb C} (\partial _{w} )$.
	
	As a result, $g(v,v)=0$ if and only if $v\in Span_{\mathbb C}(\partial _{\overline z}) \cup Span_{\mathbb C} (\partial_{w} )$.  
	
	So the two complex structures $c_1$ and $\overline {c_2}$ correspond to the unique almost-complex structures $J_1, \overline{J_2}\in End(TS)$ whose $(-i)$-eigenspaces are the two isotropic directions of $g$. 

\end{remark}

\begin{remark}
		\label{rmk: coniugato Bers metric}
		Given a Bers metric $g=g(c_1,\overline{c_2})$, the complex metric $\overline g$, defined by $\overline g (v,w)= \overline{g(v,w)}$ for all $v,w\in TS$, is a Bers metric too. We can see this as follows. 
		
		Assume $g$ corresponds to the immersion $(\sigma_1, \overline {\sigma_2})\colon \widetilde S\to \GG$ and can therefore be written as
	\[
	g=-\frac 4 {(\sigma_1-\overline{\sigma_2})^2} d\sigma_1\cdot d\overline{\sigma_2}\ ,
	\]
	where, with a little abuse, we identify here $\sigma_1$ and $\overline{\sigma_2}$ with their composition with any affine chart on $\CP$. Then,
	\[
	\overline{g}= -\frac 4 {(\sigma_2-\overline{\sigma_1})^2} d\sigma_2\cdot  d\overline{\sigma_1}\ ,
	\]
	which means that $\overline g$ is the complex metric of constant curvature $-1$ corrisponding to the immersion $(s\circ \overline{\sigma_2}, s\circ \sigma_1)$, where $s$ is any orientation reversing involution of $\Hyp^3$ (e.g. the symmetry with respect to a totally geodesic submanifold $\Hyp^2\subset \Hyp^3$). If $\sigma_1$ and $\overline{\sigma_2}$ are $\rho$-equivariant, then $s\circ \overline{\sigma_2}$, and $s\circ \sigma_1$ are $(s\circ \rho\circ s)$-equivariant, with $s\circ \rho\circ s$ being quasi-Fuchsian since its limit set is the image through $s$ of the limit set of $\rho$.
	
	One finally observes that if the data $(c_1, \overline{c_2})\in T(S)$ produces, through Bers theorem, the holonomy $\rho$ and the embeddings $\sigma_1$ and $\overline {\sigma_2}$, then, by uniqueness, the data $(c_2, \overline{c_1})$ corresponds to the holonomy $s\circ \rho \circ s$ and to the embeddings $s\circ \overline{\sigma_2}, s\circ \sigma_1$. We therefore conclude that $\overline g$ is a Bers metric with $\overline g= g(c_2, \overline{c_1})$

	Finally, we remark that the $\mathbb C$-bilinear extension of $\overline g$ to $\mathbb C TS$ satisfies $\overline g(X,Y)= \overline{g(\overline X, \overline Y)}$, for all $X,Y\in \mathbb C TS$.
\end{remark}

\begin{remark}
	\label{rmk: g rhodzdw}
	Let $g$ be a positive complex metric, and let $z$ and $\overline w$ be local coordinates for $c_+(g)$ and $c_-(g)$ on an open subset $U\subset S$, then we can write $g=\varrho dz d\overline w$, with $\varrho\colon U\to \mathbb C^*$.
\end{remark}

\begin{remark}
	\label{rmk: area form}
	As for Riemannian metrics, one can define the area form of $g$ as each of the two nowhere-vanishing 2-forms defined by $\pm\sqrt{g(\partial_{x_1}, \partial_{x_1})g(\partial_{x_2}, \partial_{x_2})- (g(\partial_{x_1}, \partial_{x_2}))^2}\ dx_1\wedge dx_2$, where $(x_1,x_2)$ is any smooth coordinate chart. Since $\CSCS$ is simply connected, there is a consistent choice of an area form for each Bers metric depending on the orientation of $S$: if locally $g=\varrho dzd\overline w$, then its area form consistent with the orientation of $S$ is
	\begin{equation}
		\label{eq: area form of g}
		dA_g= \frac i 2 \varrho dz \wedge d\overline w\ .
	\end{equation}
\end{remark}

Before concluding the section, we introduce a notion for complex metrics that is weaker than positivity. 
\begin{defi}
    We say that a hyperbolic complex metric $g$ on $S$ is \textbf{upper-projective} (resp. \textbf{lower-projective}) if the corresponding equivariant totally real immersion $(\sigma_1, \overline{\sigma_2})\colon \widetilde S\to \mathbb G=\CP\times\CP\setminus \Delta$ is $(\pi_1(S),\PSL(2,\C))$-equivariant and is such that $\sigma_1$ (resp. $\overline{\sigma_2}$) is a local diffeomorphism.

    We say that a complex metric is upper-projective (resp. lower-projective) if it is a conformal multiple of an upper-projective (resp. lower-projective) hyperbolic complex metric. 
\end{defi} 
By Theorem \ref{thm: GC in G}, positive complex metrics are both upper-projective and lower-projective. 

In particular, an upper-projective hyperbolic complex metric $g=(\sigma_1, \overline{\sigma_2})^*\inners_{\GG}$ defines a complex projective structure with developing map $\sigma_1$. 
Generalizing the maps in Equation \eqref{eq: mappa c+c-}, we denote by $c_+(\lambda g)\in \mathcal C(S)$ the complex structure defined by pull-back via the corresponding map $\sigma_1$. Similarly, if $g$ is lower-projective we can define $c_-(\lambda g)\in \mathcal C(\overline S)$ with the pull-back complex structure given by $\overline{\sigma_2}$.

\subsection{Deforming Bers metrics with quadratic differentials}

Recall that a useful notation we use in this paper is to consider, for each $c\in \mathcal C(S)$, a global holomorphic coordinate $z$ on $\widetilde S$ for the lifted complex structure $\widetilde c$. In the same fashion, holomorphic quadratic differentials for $c$ can be seen as $\pi_1(S)$-invariant holomorphic quadratic differentials $\phee dz^2$ on $\widetilde S$.
Also, we will identify complex metrics on $S$ with their lifts to $\widetilde S$, so, when $g$ is positive, we can see $g=\varrho dz d\overline w$ (see Remark \ref{rmk: g rhodzdw}) with $z$ and $\overline w$ being global coordinates on $\widetilde S$ and $\varrho\colon \widetilde S\to \mathbb C^*$.

{
\begin{prop}
\label{prop: g+q same curvature}
    Let $g$ be an upper-projective complex metric with $c_+(g)=c_1$. 
 
    Then, for all $q_1\in \mathrm{HQD}(c_1)$, $ g+q_1$ is a complex metric with the same curvature as $ g$, namely $\mathrm K_{ g}= \mathrm K_{ g+q_1}$. 

\end{prop}
}

\begin{proof}
	Let $z$ denote a local holomorphic coordinate for $c_1$ and write $q_1=\phee dz^2$. Denote by $\nabla$ the Levi-Civita connection of $g$, and define the $(1,1)$-form $\beta\in End(\mathbb C TS)$ by the condition $q_1=g(\beta \cdot, \cdot)$.
		Notice that $\beta$ is self-adjoint for $g$ since $q_1$ is symmetric. 
		
		\begin{itemize}[leftmargin=*,noitemsep,topsep=0pt]
			\item
		We prove that $\beta$ satisfies Codazzi equation $d^{\nabla} \beta=0$. 
		
		Recall that, by Remark \ref{rmk: from g to c1c2}, $g(\partial_{\overline z}, \partial_{\overline z})=0$, and, since the bilinear form $g$ is non-degenerate on $\C TS$, $g(\beta(\partial_{\overline z}), \cdot) = q_1(\partial_{\overline z}, \cdot)=0$ implies that $\beta(\partial_{\overline z})=0$.
		On the other hand, $g(\beta, \partial_{\overline z})=q_1(\cdot,\partial_{\overline z})=0$, so $\beta\in Span_\mathbb C (\partial_{\overline z} \otimes dz)$. 
		Finally, $g(\beta(\partial_{z}),\partial_z )=\phee$ implies $\beta= \frac{\phee}{g(\partial_z, \partial_{\overline z})} \partial_{\overline z} \otimes dz$.
		As a result,
		\begin{align*}
		(d^\nabla \beta)(\partial_{\overline z}, \partial_z)=& \nabla_{\partial_{\overline z} } (\beta(\partial_z) ) - \nabla_{\partial_{z} } (\beta(\partial_{\overline z})) - \beta([\partial_z, \partial_{\overline z}]) =  \nabla_{\partial_{\overline z} } (\beta(\partial_z)) =\\
		&=  {\partial_{\overline z} } \left( \frac{\phee}{g(\partial_z, \partial_{\overline z})}\right)  \partial_{\overline z} + \frac{\phee}{g(\partial_z, \partial_{\overline z})} \nabla_{\partial_{\overline z} } {\partial_{\overline z}} =\\
		&= -\frac{\phee}{(g(\partial_z, \partial_{\overline z}))^2} \cdot \partial_{\overline z} (g(\partial_z, \partial_{\overline z}))  \partial_{\overline z} + \frac{\phee}{g(\partial_z, \partial_{\overline z})} \nabla_{\partial_{\overline z} } {\partial_{\overline z} } \ .	
		\end{align*} 
		By Equation \eqref{eq: metric compatibility complex metrics}, $g(\nabla \partial_{\overline z}, \partial_{\overline z})=\frac 1 2 d(g(\partial_{\overline z}, \partial_{\overline z}))=0$, so $\nabla_v\partial_{\overline z}\in  Span_{\mathbb C}(\partial_{\overline z})$ for all $v\in \C TS$. Therefore, $(d^\nabla \beta)(\partial_{\overline z}, \partial_z)\in Span_{\mathbb C}(\partial_{\overline z})$, so  $(d^\nabla \beta)(\partial_{\overline z}, \partial_z)=0$ if and only if $g((d^\nabla \beta)(\partial_{\overline z}, \partial_z), \partial_z)=0$. Finally:
		\begin{align*}
			g((d^\nabla \beta)(\partial_{\overline z}, \partial_z), \partial_z)=& -\frac{\phee}{(g(\partial_z, \partial_{\overline z}))^2} \cdot \partial_{\overline z} (g(\partial_z, \partial_{\overline z}))  g(\partial_{\overline z}, \partial_z) + \frac{\phee}{g(\partial_z, \partial_{\overline z})}g( \nabla_{\partial_{\overline z} } {\partial_{\overline z} }, \partial z )=\\
			&=-\frac{\phee}{g(\partial_z, \partial_{\overline z})} \cdot \partial_{\overline z} (g(\partial_z, \partial_{\overline z}))  +\frac{\phee}{g(\partial_z, \partial_{\overline z})}g( \nabla_{\partial_{\overline z} } {\partial_{\overline z} }, \partial z )=\\
			&= - \frac{\phee}{g(\partial_z, \partial_{\overline z})}  g(\partial_{\overline z}, \nabla_{\partial_{ {\overline z } } } {\partial_z }) =0
		\end{align*}
		where the last steps follow from $\eqref{eq: metric compatibility complex metrics}$ and by the fact that $\nabla$ is torsion free.
		\item Consider $\alpha= id+\frac 1 2 \beta$. We prove that $g(\alpha\cdot, \alpha \cdot)= g+q_1$. 
		
		Indeed, $\beta(\partial_{\overline z})=0$ and $\beta(\partial_z)\in Span_\C(\partial_{\overline z})$ implies that $tr(\beta)=0$ and $det(\beta)=0$, so from its characteristic polynomial we have that $\beta^2=0$, implying that 
		\[
		g(\alpha\cdot, \alpha \cdot)= g(\alpha^2\cdot, \cdot)= g+g(\beta\cdot, \cdot)= g+q_1 \ .
		\]
		
		\item $g+q_1$ is a (nowhere degenerate) complex metric. In fact, denoting locally $g=\varrho dzd\overline w$ and recalling that $\varrho\ne 0$ and $\partial_{\overline z}\overline w\ne0$, for all $a,b\in \mathbb C$
		\[0=(g+q_1)(a\partial_z+b \partial_{\overline z }, \cdot)= \frac 1 2a\varrho \ d\overline w + \frac 1 2 a \varrho \partial_z \overline w \ dz +  a \phee\  dz + \frac 12 b \varrho \partial_{\overline z}\overline w\  dz 
		\]
		implies $a=0$ and $b=0$,  so $g+q_1$ is non-degenerate.
		
		\item We prove that $g+q_1$ has the same curvature as $g$. 
		
		First of all, notice that $d^\nabla \alpha=0$ and that $det(\alpha)= 1$ since $det(\beta)=tr(\beta)=0$. 
		
		Using  $d^\nabla \alpha=0$  one can see that the affine connection on $\C TS$ defined by 
		\[
		\widehat \nabla _X Y = \alpha^{-1}(\nabla_X \alpha(Y) )
		\]
		is compatible with the complex metric $g+q_1=g(\alpha\cdot, \alpha\cdot)$ and is torsion-free, so $\widehat \nabla$ is the Levi-Civita connection of $g+q_1$. As a consequence, one can immediately check that the corresponding curvature tensor $\widehat {\mathrm{R}}$ is related to the curvature tensor $\mathrm R$ of $\nabla$ via
		\[
		\widehat {\mathrm R}(X,Y) Z =  \alpha^{-1} (\mathrm R(X,Y) \alpha(Z))\ .
		\]
		Finally, using the standard symmetries of the curvature tensor, the curvature $\widehat{\mathrm{K}}$ of $g+q_1$ is such that
		\begin{align*}
		{\mathrm K}_{g+q_1}=&\frac{(g+q_1) (\widehat {\mathrm R}(X,Y)Y, X  ) }{(g+q_1)(X,X)\cdot (g+q_1)(Y,Y) - ( (g+q_1)(X,Y) )^2 }=\\
		&= \frac {g (\mathrm R(X,Y) \alpha(Y), \alpha (X)  ) }{g(\alpha(X),\alpha(X) )g(\alpha(Y),\alpha(Y)) - (g(\alpha(X),\alpha(Y)) )^2 }=\\
		&= \frac 1 {\det(\alpha)^2} \frac{(\det(\alpha))g(\mathrm R(X,Y)Y, X)}{ \left( {g(X,X)g(Y,Y) - (g(X,Y) )^2 } \right)} =\mathrm{K}_h .
		\end{align*} 

    \end{itemize} 
    \end{proof}

{
\begin{prop}
\label{prop: star-shaped}
    Let $g$ be a positive complex metric with $c_1= c_+(g)$. The subset $U_g=\{q_1\in  \HQD(c_1)\ |\ \text{$g+q_1$ is a positive complex metric} \}$ is an open star-shaped subset centered in $0$ of $\HQD(c_1)$. Moreover, if $g$ is a Bers metric, then $g+q_1$ is a Bers metric if and only if it is a positive metric.
\end{prop}

}
\begin{proof}
The fact that $U_g$ is an open subset is a direct consequence of the positivity being an open condition. 
We prove that $U_g$ is star-shaped at $0\in \HQD(c_1)$.

	Let $q_1\in \HQD(c_1)$, $q_1\ne 0$, and let $z$ and $\overline w$ be local coordinates for $c_1$ and $\overline{c
_2}= c_-(g)$.

A quick calculation shows that the isotropic directions of $g+tq_1$ are given by $Span_{\C}(\partial_{\overline z})$ and by the complex span of the vector field
	\[
	W_t:=\partial_w - t \frac{q_1(\partial_w, \partial_w)}{2g(\partial_w, \partial_{ {\overline z } })} \partial_{ {\overline z } }\  .
	\] 

   Therefore, $g+tq_1$ is a positive complex metric if and only if, for all $p\in S$, $Span_\C(W_t)\in  \mathbb P(\mathbb CT_p S)$ lies in the connected component of $ \mathbb P(\mathbb CT_p S)\setminus \mathbb P(T_pS)$ which does not contain $Span_\C(\partial_{ {\overline z } })$, which we call $D_p$.

    By means of the stereographic projection, one can see that for all $p\in S$ the map 
	\begin{align*}
	\mathbb R &\to  \mathbb P(\mathbb C T_pS)\\
	t&\mapsto [W_t]_p
	\end{align*}
is a monotone parametrization of a great circle minus a point, namely $\{[a \frac{q_1(\partial w, \partial w)}{2g(\partial_w, \partial_{ {\overline z } })}\partial_{\overline z} + b\partial_w]\ |\ a,b\in \mathbb R\ ,b\ne 0\}$. As a consequence, 
for each $p\in S$, the condition $Span_{\C}( W_t)\in D_p$ is satisfied for $t$ lying in an interval of $\mathbb R$ containing $0$: thus, by taking the intersection for all $p\in S$, we deduce that the set of $t\in \mathbb R$ for which $g+tq_1$ is a positive complex metric is an interval containg $0$. 
 
 If $g$ is a Bers metric, then, since by Theorem \ref{thm: Bers is connected component and c+c-} Bers metrics define a connected component of the space of positive hyperbolic complex metrics, the fact that $U_g$ is connected allows us to conclude that $g+q_1$ is Bers if and only if it positive.

\end{proof}

\begin{remark}
    Analog results to Propositions \ref{prop: g+q same curvature} and \ref{prop: star-shaped} hold for lower-projective complex metrics $g$ and for $c_-(g)$-holomorphic quadratic differentials. A simple way to prove this follows from observing that $g$ is lower-projective if and only if $\overline g$ is lower-projective and $\mathrm K_{\overline g}= \overline{\mathrm K_g}$. When $g$ is a positive complex metric, we denote by $V_g$ the star-shaped open subset of $\HQD(c_-(g))$ analog to $U_g$.
\end{remark}

\begin{remark}
	\label{rmk: conti su h+q}
	Let $g$ be a positive complex metric, let $\ccpair=(c_+,c_-)(g)$, and let $U_g$ be as in Proposition \ref{prop: star-shaped}. There are a few interesting computations we can make to understand the new Bers metric $g+q_1$.
	
	Lift the tensors to the universal cover $\widetilde S$, denote $g=\varrho_0 dz d\overline w$ and $q_1=\phee dz^2\in U_g$.
	
	By construction of Bers metrics, there exists a smooth function $\varrho\colon \widetilde S\to {\mathbb C}^*$, and an orientation-reversing $(\pi_1(S),\PSL(2,\C))$-equivariant map $\overline \eta\colon \widetilde S\to \CP$ with open image and which is a diffeomorphism onto its image, such that
	\[
	\varrho_0 dz d\overline w +\phee dz^2 = g+q_1 = \varrho dz d\overline \eta \ .
	\]
	Since the symmetric product between 1-forms is non-degenerate, we get that
	\begin{equation}
		\label{eq: 1 forme in h+q}
		\varrho d\overline \eta= \varrho_0 d\overline w +\phee dz\ .
	\end{equation}
	
	From Equation \eqref{eq: 1 forme in h+q}, we get a few interesting remarks:
	\begin{enumerate}
		\item \label{item 1} $\varrho= \varrho_0 \frac{\partial_{\overline z} \overline w}{\partial_{\overline z} \overline \eta}$ and $\varrho \partial_z \overline \eta= \varrho_0\partial_z\overline w+\phee $ (from evaluating in $\partial_{\overline z}$ and in $\partial_z$ respectively).
		\item \label{item 2} $\varrho {\partial_w \overline \eta} =\phee { \partial_w z}$ and $\varrho \partial_{\overline w}\overline{\eta}=\varrho_0+\phee \partial_{\overline w} z$ (from evaluating in $\partial_{w}$ and in $\partial_{\overline w}$ respectively).
		\item \label{item 3} $\frac{\partial_z \overline{\eta}} {\partial_{\overline z}\overline{\eta} }= \frac{\varrho_0 \partial_z\overline w+\phee}{\varrho_0 \partial_{\overline z}{\overline w}}$ (from item \eqref{item 1}).
		\item \label{item 4} $\frac{\partial_{w} \overline \eta}{\partial_{\overline w} \overline \eta } = \frac{\phee \partial_w z}{\varrho_0 + \phee \partial_{\overline w} z}$  (from item \eqref{item 2}).
	\end{enumerate}
	We get similar calculations from the study of $g+\overline {q_2}$, with $\overline{q_2}=\overline \psi d\overline w^2\in V_g\subset \HQD(\overline{c_2})$.

\end{remark}

\begin{lemma}
	\label{lemma: beltrami differentials}
	Let $g=g(c_1, \overline {c_2})$ be a Bers metric, $z,\overline w$ coordinates for $c_1$, $\overline{c_2}$, and $g=\varrho dzd\overline w$. 
	
	For all $q_1\in \HQD(c_1)$, the infinitesimal deformation of $c_-(g+tq_1) \in \mathcal C(\overline S)$ at $t=0$ is given by the Beltrami differential $ \frac{\phee \partial_w z } {\varrho_0 } \frac{d w}{d \overline w}$, where $q_1=\phee dz^2$.
	
	Similarly, for all $\overline{q_2}\in \HQD(\overline{c_2})$, the infinitesimal deformation of $c_+(g+t\overline{q_2}) \in \mathcal C(S)$ at $t=0$ is given by the Beltrami differential $\frac{\overline \psi \partial_{\overline z} \overline w} {\varrho_0 } \frac{d\overline z}{d z} $, where $\overline{q_2}=\overline \psi d\overline w^2$.
\end{lemma}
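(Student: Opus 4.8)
The plan is to read the infinitesimal deformation of $c_-$ off the isotropic directions of the deforming family $g+tq_1$ and then translate it into the language of Beltrami differentials. Throughout I work on $\widetilde S$, write $g=\rho_0\,dz\,d\overline w$ and $q_1=\phee\,dz^2$, and use repeatedly the duality relations $dz(\partial_{\overline z})=d\overline w(\partial_w)=0$ together with the fact that $\mathrm{Span}_\C(\partial_w)$ is isotropic for $g$ (so $g(\partial_w,\partial_w)=0$), as recorded in Remark \ref{rmk: from g to c1c2}.

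First I recall, from the proof of the preceding Proposition, the nowhere-vanishing vector field
$$ V_t=\partial_w-t\,\frac{q_1(\partial_w,\partial_w)}{2\,g(\partial_w,\partial_{\overline z})}\,\partial_{\overline z}, $$
which is isotropic for $g+tq_1$ for every small $t\in\R$. Since the isotropic direction $\mathrm{Span}_\C(\partial_{\overline z})$ attached to $c_+=c_1$ is unchanged, Remark \ref{rmk: from g to c1c2} identifies $\mathrm{Span}_\C(V_t)$ with the $(-i)$-eigenspace of the almost complex structure $c_-(g+tq_1)$.

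The key step is to convert this line into a Beltrami differential relative to $\overline{c_2}$. Recalling the dictionary of Section \ref{introsection Beltrami}, a path of complex structures on $\overline S$ through $\overline{c_2}$, whose holomorphic coordinate is $\overline w$, has $(-i)$-eigenspace spanned to first order by $\partial_w-\nu(t)\,\partial_{\overline w}$, and its infinitesimal deformation is the Beltrami differential $\dot\nu(0)\,\tfrac{dw}{d\overline w}$. I therefore rewrite $V_t$ in the frame $\{\partial_w,\partial_{\overline w}\}$ adapted to $\overline{c_2}$ via the chain rule $\partial_{\overline z}=(\partial_{\overline z}w)\,\partial_w+(\partial_{\overline z}\overline w)\,\partial_{\overline w}$, and divide by the $\partial_w$-component, which equals $1+O(t)$ and does not change the spanned line. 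This gives
$$ \mathrm{Span}_\C(V_t)=\mathrm{Span}_\C\!\left(\partial_w-t\,\frac{q_1(\partial_w,\partial_w)\,(\partial_{\overline z}\overline w)}{2\,g(\partial_w,\partial_{\overline z})}\,\partial_{\overline w}+O(t^2)\right), $$
so that $\dot\nu(0)=\dfrac{q_1(\partial_w,\partial_w)\,(\partial_{\overline z}\overline w)}{2\,g(\partial_w,\partial_{\overline z})}$.

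It remains to simplify. Expanding the symmetric products and using $dz(\partial_{\overline z})=0$ gives $q_1(\partial_w,\partial_w)=\phee(\partial_w z)^2$ and $g(\partial_w,\partial_{\overline z})=\tfrac{\rho_0}{2}(\partial_w z)(\partial_{\overline z}\overline w)$; substituting and cancelling the common factors yields $\dot\nu(0)=\frac{\phee\,\partial_w z}{\rho_0}$, i.e. the Beltrami differential $\frac{\phee\,\partial_w z}{\rho_0}\,\frac{dw}{d\overline w}$, as claimed. The second statement, for $g+t\overline{q_2}$, follows by the same argument with the two isotropic directions interchanged — the relevant isotropic vector field and coordinate relations are exactly the primed computations of Remark \ref{rmk: conti su h+q} — or, more economically, by conjugating through Remark \ref{rmk: coniugato Bers metric}, which turns $g(c_1,\overline{c_2})$ into $g(c_2,\overline{c_1})$ and swaps the two deformation directions. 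I expect the only delicate point to be keeping the conventions straight: the isotropic line is presented through $\partial_{\overline z}$ but must be re-expressed in the $\{\partial_w,\partial_{\overline w}\}$-frame before $\nu$ can be read off, and one must verify that the result is a Beltrami differential of the correct type $\tfrac{dw}{d\overline w}$ for $\overline{c_2}$ on $\overline S$.
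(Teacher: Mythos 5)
Your proof is correct, and it reaches the formula by a route dual to the paper's. You work in the tangent bundle: you take the isotropic vector field $V_t$ (introduced in the proof that $g+q_1$ is a Bers metric, where it serves to establish positivity and star-shapedness of $U_g$), identify $\mathrm{Span}_\C(V_t)$ with the $(0,1)$-line of $c_-(g+tq_1)$ via Remark \ref{rmk: from g to c1c2}, rewrite it as a graph over $\partial_w$ in the frame $\{\partial_w,\partial_{\overline w}\}$, and read off the Beltrami coefficient as the graph coefficient. The paper instead works in the cotangent bundle: it writes $g+tq_1=\rho_t\,dz\,d\overline{w_t}$ and uses non-degeneracy of the symmetric product of $1$-forms to get the identity $\rho_t\,d\overline{w_t}=\rho_0\,d\overline w+t\phee\,dz$ (Remark \ref{rmk: conti su h+q}), whose evaluation on $\partial_w$ and $\partial_{\overline w}$ gives the Beltrami coefficient exactly, $\mu_t=\frac{t\phee\,\partial_w z}{\rho_0+t\phee\,\partial_{\overline w} z}$, for every admissible $t$ and not only to first order. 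The two computations are equivalent --- your $V_t$ spans precisely the kernel of $\rho_0\,d\overline w+t\phee\,dz$ --- so nothing is lost; the paper's version avoids the eigenspace/sign bookkeeping needed to convert a $(0,1)$-line into a Beltrami differential (which you do handle correctly) and yields a closed formula along the whole path, while yours recycles an object already at hand and makes explicit the role of the isotropic-direction characterization of $(c_+,c_-)$. Your treatment of the second half, by symmetry or by conjugation via Remark \ref{rmk: coniugato Bers metric}, matches the paper's.
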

\begin{proof}
	Let us reconsider the methods of Remark \ref{rmk: conti su h+q}. 
	For $q_1\in \HQD(c_1)$, $q_1=\phee dz^2$, we have
	\[
	\varrho dzd\overline w +t\phee dz^2=g+tq_1 = \varrho_t dzd\overline {w_t}
	\]
	for some $\pi_1(S)$-equivariant orientation-reversing open embedding $\overline w_t\colon \widetilde S\to \CP$, with $\overline w= \overline w_0$.
	For sufficiently small $t$, we have that $c_-(g+t q_1))\in \mathcal C(\overline S)$ is the complex structure induced by $\overline w_t$.
	
	By item $\eqref{item 4}$ in Remark \ref{rmk: conti su h+q}, we obtain that
	\[
	\frac{\partial_{w} \overline {w_t}\ dw}{\partial_{\overline w} \overline{w_t}  \ d\overline w } = \frac{t\phee \partial_w z}{\varrho_0 + t\phee \partial_{\overline w} z} \frac{dw}{d\overline w} ,  
	\] 
	so we conclude that ${\frac d {dt}}_{|t=0} \frac{\partial_{w} \overline {w_t} d w }{\partial_{\overline w} \overline{w_t} d\overline w  }= \frac{\phee \partial_w z } {\varrho_0 } \frac{d w}{d \overline w} $.
	
	The proof for the case $g+t\overline {q_2}$ is totally symmetric.
\end{proof}

\section{The Schwarzian map and the metric model for $T \QF(S)$}
\label{sec: Metric model}

\subsection{The relation with the Schwarzian derivative}

Let $g$ be an upper-projective hyperbolic complex metric on $S$ with $c_+(g)=c_1$.
In the notation of Theorem $\ref{thm: GC in G}$, let $\sigma=(\sigma_1, \overline{\sigma_2})\colon \widetilde S\to \mathbb G$ be the corresponding equivariant immersion. We define $\mathbf{Schw}_+(g)\in \HQD(c_1)$ to be the Schwarzian derivative of the complex projective structure defined by $\sigma_1$ and by the holonomy of $\sigma$.

The aim of Section \ref{sec: Metric model} is to prove the following Theorem.
\begin{theorem}
	\label{thm: schwartzian}
 Let $g$ be a hyperbolic complex metric on $S$.
	
	If $g$ is upper-projective with $c_+(g)=c_1$, then, for all $q_1\in \HQD(c_1)$, $g+q_1$ is upper-projective with $c_+(g+q_1)=c_1$, and
	\begin{equation}
		\label{eq: schw holo}
	\mathbf{Schw}_+(g+q_1) = \mathbf{Schw}_+(g)- \frac 1 2 q_1 \ .
	\end{equation}
    Similarly, if $g$ is lower-projective with $c_-(g)=\overline{c_2}$, then, for all $\overline{q_2}\in \HQD(\overline{c_2})$, $g+\overline{q_2}$ is lower-projective with $c_-(g+\overline{q_2})=\overline{c_2}$, and
	\begin{equation}
		\label{eq: schw antiholo}
		\mathbf{Schw}_-(g+\overline{q_2}) = \mathbf{Schw}_-(g) -\frac 1 2 \overline{q_2}\ .
	\end{equation}

The maps $q_1\mapsto [g+q_1]$ and $\overline{q_2}\mapsto [g+\overline{q_2}]$ are hence holomorphic and define a model for the holomorphic tangent bundle of $\QF(S)$ given by
\begin{equation}
	\label{eq: tangent model}
	T_{([c_1], [\overline{c_2}])}\QF(S)\cong \HQD(c_1)\oplus \HQD(\overline{c_2}) 
\end{equation}
with the induced almost-complex structure defined by the multiplication times $i=\sqrt{-1}$.
\end{theorem}

We will refer to the model in \eqref{eq: tangent model} as the \textbf{metric model} for the holomorphic tangent bundle of $\QF(S)$.

\begin{cor}
\label{cor: g+q isotopy independent}
Let $\phi\in \mathrm{Diff}_0(S)$ be an isotopy, let $c_1\in \mathcal C(S)$ and $c_1'=\phi^*c_1$. Let $g$ and $g'$ be hyperbolic complex metrics such that $g$ is $c_1$-projective, $g'$ is $c_1'$-projective, and $\phi^*(\mathbf{Schw}_+(g))=\mathbf{Schw}_+(g')$. Then, for all $q_1\in \HQD(c_1)$,  $g+q_1$ and $g' +\phi^*q_1$ induce isotopic complex projective structures for $c_1$ and $c'_1$. In particular, they have the same holonomy.
\end{cor}

Before proving Theorem \ref{thm: schwartzian}, let us see some remarks concerning this result.

\begin{remark}
	The notation $T_{([c_1], [\overline{c_2}])}\QF(S)\cong \HQD(c_1)\oplus \HQD(\overline{c_2})$ can be intended as \[
	\HQD([c_1])\oplus \HQD([\overline{c_2}])=\{q_1+\overline{q_2}\ |\ q_1\in \HQD(c_1),\ \overline{q_2}\in \HQD(\overline{c_2})  \}< \Gamma( Sym_2(TS,\C)) \ .
	\]
	In fact, Bers metrics can be seen as the image of a holomorphic map $g\colon \mathcal C(S)\times\mathcal C(\overline S)\to\Gamma(Sym_2(T S,\C))$ in the sense of Fréchet manifolds (see \cite{ESholodependence}): this map is an immersion, with the image of the differential map in $\ccpair$ being generated by the complex span of Lie derivatives of $g(c_1,\overline{c_2})$ (which correspond to complex Lie derivatives, a notion defined in \cite{ElSa}), and by $\HQD(c_1)\oplus \HQD(\overline{c_2})$, which correspond to infinitesimal deformations of the form $\frac d {dt}(g+tq_1)$ and $\frac d {dt}(g+t\overline{q_2})$. 
 The differential of its projection $\mathcal C(S)\times \mathcal C(\overline S)\to \QF(S)$ maps injectively and surjectively the subspace corresponding to $ \HQD(c_1)\oplus \HQD(\overline{c_2})$ onto $T_{([c_1], [\overline{c_2}])}\QF(S)$ as in Theorem \ref{thm: schwartzian}.
\end{remark}

\begin{remark}
	\label{rmk: models for the tangent}
	By definition, each left Bers slice $\QF(\bullet, [\overline{c_2}])$ is biholomorphic to $\mathcal T(S)$, while each right Bers slice $\QF([c_1], \bullet)$ is biholomorphic to $\mathcal T(\overline S)$: Lemma \ref{lemma: beltrami differentials} gives a description of the differential of these maps, namely
	
	\begin{equation}
		\label{eq: models tangent da QF(c,.)} 
		\begin{split}
		\HQD(c_1)\cong T_{[\overline{c_2}]} \QF([c_1], \bullet)  &\xrightarrow{\sim} T_{[\overline{c_2}]}\mathcal T(S)\cong \mathrm{Belt}(\overline{c_2})\\
		q_1=\phee dz^2&\mapsto \left[ \frac{\phee \partial_w z}{\varrho}\frac{dw}{d\overline w}\right] \ ,
		\end{split}
	\end{equation}
	and 
		\begin{equation}
				\label{eq: models tangent da QF(., c)} 
		\begin{split}
		\HQD(\overline{c_2})\cong T_{[c_1]} \QF(\bullet, [\overline{c_2}])&\xrightarrow{\sim} T_{[{c_1}]} \mathcal T(\overline S)\cong \mathrm{Belt}({c_1})\\
		\overline{q_2}=\overline{\psi} d\overline w^2&\mapsto \left[ \frac{\overline \psi \partial_{\overline z} \overline w}{\varrho}\frac{d\overline z}{d z}\right]
	\end{split}
\end{equation}
\end{remark}

\begin{remark}
	\label{rmk: differential of the fuchsian map}
 With respect to the metric model for the holomorphic tangent bundle of $\QF(S)$ in \eqref{eq: tangent model}, we can write the differential of the Fuchsian map $\mathcal T(S)\to \mathrm{Fuch(S)}\subset \QF(S)$ as:
 \begin{align*}
 T_{[c]}\mathcal T(S)&\xrightarrow{\sim} 	T_{([c],[\overline c])} \mathrm{Fuch}(S)< T_{([c],[\overline c])} \QF(S)\\
 \left[\frac{\overline q}{g_0} \right]&\mapsto q+\overline q
 \end{align*}
 where $g_0=g(c,\overline c)$ and $q\in \HQD(c)$.
 To see this, just observe that the composition with the differential of the projection of $\QF(S)$ to the Bers slice $\QF(\bullet, [\overline c]) =\mathcal T(S)\times\{[\overline{c}]\}$ (resp. $\QF([c], \bullet)=\{[c]\}\times\mathcal T(\overline S)$) gives $[\frac{\overline q}{g_0} ]\mapsto [q]$ (resp.$[\frac{\overline q}{g_0} ]\mapsto [\overline q]$) which coincides with the inverse of map \eqref{eq: models tangent da QF(., c)} (resp. the inverse of the conjugate of map \eqref{eq: models tangent da QF(c,.)}) for $c=c_1=c_2$.

\end{remark}

\begin{remark}
	\label{rmk: metric model Teichmuller}
	This metric model for the holomorphic tangent bundle of $\QF(S)$ extends the metric model for Teichmüller space, identified through the uniformization theorem with the space $Met_{-1}(S)$ of hyperbolic Riemannian metrics up to isotopy, studied in \cite{BergerEbin} and \cite{FischerTromba}. We recall it briefly.
	
	Let $h_0$ be a hyperbolic metric, and let $\dot h$ denote an infinitesimal variation of it with hyperbolic metrics, namely $\dot h={\frac d {dt}}_{|t=0} h_t\in Sym_2(TS)$. Then one can see the tangent bundle of $Met_{-1} (S)$ as
	\begin{equation}
		\label{eq: metric model Teichmuller}
		T_{[h_0]}Met_{-1}(S)=\{ \dot h\in Sym_2(TS)\ |\ tr_{h_0}(\dot h)=0 \ \text{ and } \ div_{h_0} (\dot h)=0 \}\  
	\end{equation}
	where $div$ denotes the divergence. 
	
	A classic computation shows that  $tr_{h_0}(\dot h)=0$ and $div_{h_0} (\dot h)=0$ if and only if $h_0$ is the real part of a holomorphic quadratic differential for the complex structure $c_0$ induced by $h_0$.  We therefore get:
	\[
	T_{[h_0]}Met_{-1}(S)=\{ q+\overline q \ |\ q\in \HQD(c_0) \}\ .
	\]
	which coincides with the tangent bundle to $\mathrm{Fuch}(S)$ seen as a submanifold of $\QF(S)$, as shown in Remark \ref{rmk: differential of the fuchsian map}.
\end{remark}

\subsection{Proof of Theorem \ref{thm: schwartzian}}

We approach the proof of Theorem \ref{thm: schwartzian}. We thank the referee for suggesting this alternative proof strategy.

Let $g$ be an upper-projective hyperbolic complex metric, with $c_+(g)=c_1$ and with corresponding equivariant immersion $\sigma=(\sigma_1, \overline{\sigma_2})\colon \widetilde S\to \mathbb G$, whose holonomy we denote by $\rho_0$. As we observed before, $(\sigma_1, \rho_0)$ defines a $\CP$-structure conformal to $c_1$. 

Let $q_1\in \mathrm{HQD}(c_1)$ and let $(\widehat{\sigma_1}, \rho)$ be the $\CP$-structure uniquely characterized by the fact that it induces the complex structure $c_1$ and that the Schwarzian derivative is $Schw(\widehat{\sigma_1})=Schw(\sigma_1)+q_1$. 

Now, consider the {osculating} map $\Phi_{\bullet}\colon \widetilde S\to \PSL(2,\C)$ for $\widehat{\sigma_1}$ with respect to $\sigma_1$, defined as in Section \ref{introsection proj structures}.  
Since, $\Phi_{\gamma(p)}\circ \rho_0(\gamma)= \rho \circ \Phi_p(\gamma)$ for all $\gamma\in \pi_1(S)$, the map $\widehat{\overline{\sigma_2}}\colon \widetilde S\to \CP$ defined by $\widehat{\overline{\sigma_2}}(p):=\Phi_p({\overline{\sigma_2}}(p))$ is $\rho$-equivariant, and $\widehat{\overline{\sigma_2}}(p)\ne \widehat{\sigma_1}(p)$ for all $p$, therefore $\widehat \sigma:= (\widehat {\sigma_1}, \widehat{\overline{\sigma_2}})$ is a $\rho$-equivariant immersion. Let $\widehat g= \widehat \sigma ^*\inners_{\mathbb G}$.

\begin{prop}
\label{prop: osculating}
    With the notations above, 
    \[
    \widehat g= g-2q_1 \ .
    \]
\end{prop}
\begin{proof}
Define a local holomorphic chart on $\widetilde S$ by taking a local inverse of $\sigma_1$ composed with an affine chart of $\CP$, and denote $q_1= \phee (d\sigma_1)^2$.

    By taking a local lift to $\SL(2,\C)$, denote 
    \[
    \Phi_p= \begin{pmatrix}
        a(p) & b(p)\\
        c(p) & e(p) 
    \end{pmatrix}\ ,
    \]hence, from $\eqref{eq: derivative osculating}$, we get that
    \[
    \begin{cases}
        cda - a dc&= \frac 12 \phee d\sigma_1\\
        cdb-ade&= -\frac 12 \sigma_1 \phee d\sigma_1\\
        eda-bdc&=-\frac 1 2 \sigma_1 \phee d\sigma_1\\
        edb-bde&=\frac 12 \sigma_1^2 \phee d\sigma_1
    \end{cases}\quad .
    \]
    The proof then follows by an explicit computation: 
  \begin{align*}
      -&\frac 4 {(\widehat{\sigma_1}(z)-\widehat{\overline{\sigma_2}}(z))^2} d\widehat{\sigma_1}\cdot\widehat{\overline{\sigma_2}}=\\
      &\quad=-\frac 4 {\left(\frac{a(z){\sigma_1}(z)+b(z)}{c(z)\sigma_1(z)+e(z)} -
      \frac{a(z){\overline{\sigma_2}}(z)+b(z)}{c(z)\overline{\sigma_2}(z)+e(z)}\right)^2 }\ d\left( \frac{a(z){\sigma_1}(z)+b(z)}{c(z)\sigma_1(z)+e(z)}\right) \cdot d \left(\frac{a(z)\overline {\sigma_2}(z)+b(z)}{c(z)\overline{\sigma_2}(z)+e(z)}\right)=\\
      &\quad = -\frac 4 {(\sigma_1-\overline{\sigma_2})^2} d\sigma_1 \cdot \left(d\overline{\sigma_2}+\frac 1 2 \phee (\sigma_1-\overline{\sigma_2})^2 d\sigma_1\right)=\\
      &\quad =-\frac 4 {(\sigma_1-\overline{\sigma_2})^2} d\sigma_1\cdot d\overline{\sigma_2} -2 q_1 \ .
  \end{align*}
\end{proof}

\begin{proof}[Proof of Theorem \ref{thm: schwartzian}] By the construction above and by Proposition \ref{prop: osculating}, $\mathbf{Schw}_+(g+q_1)= \mathbf{Schw}_+(g)-\frac 12 q_1$ for all $q_1\in \HQD(c_1)$, as in the statement.  Since the map $\mathcal P(c_1)\to \HQD(c_1)$ is a biholomorphism, we conclude that the map $q_1\mapsto [g+q_1]$ is holomorphic. The proof for the lower-projective case follows in the very same fashion.

Assume now that $g=g\ccpair$ is a Bers metric and define $U_g$ as in Proposition \ref{prop: star-shaped}. Since Bers projective structures are uniquely determined by their holonomy, the map $U_g\to \QF([c_1], \bullet)$ given by $q_1\mapsto [g+q_1]$ is also injective, hence a biholomorphism because the dimensions coincide. Applying the same argument to the Bers slice $\QF(\bullet, [\overline{c_2}])$, we get the tangent model in Equation $\eqref{eq: tangent model}$.
\end{proof}

\begin{remark}
   Observe that Proposition \ref{prop: osculating} is consistent with Proposition \ref{prop: g+q same curvature}. In fact, the approach suggested by the osculating map can be used to construct a more geometric proof of Proposition \ref{prop: g+q same curvature} for the special case when $g$ has curvature $-1$. More precisely, 
let $g$ denote an upper-projective complex metric of curvature $-1$, and let $\sigma=(\sigma_1, \overline{\sigma_2})\colon \widetilde S\to \mathbb G$ be such that $g=\sigma^*\inners_{\mathbb G}$. For all $q_1\in \mathrm{HQD}(c_+(g))$, construct the osculating map $\Phi$ with respect to $\sigma_1$ for the developing map of the complex projective structure whose Schwarzian differential is $Schw(\sigma_1)-\frac 12 q$. Then, using the same computation as in the proof of Proposition \ref{prop: osculating}, the map $\widetilde S\to \mathbb G$ given by $p\mapsto (\Phi_p(\sigma_1(p)), \Phi_p(\overline{\sigma_2}(p)) )$ is such that the pull-back metric $g'$ satisfies $g'=g+q_1$. Since $g'$ has curvature $-1$ by Theorem \ref{thm: GC in G}, we conclude that $\mathrm K_{g+q_1}=-1=\mathrm K_g$. We thank the referee for this remark.
\end{remark}

\section{The holomorphic extension of the Weil-Petersson metric}
\label{section: sec metric}

\subsection{The bilinear form on $\QF(S)$}
The tools we have developed so far allow us to construct a bilinear form on the holomorphic tangent bundle of $\QF(S)$.

Let $\tau_1, \tau_2\in Sym_2(\mathbb C TS)$. A complex metric $g$ on $S$ induces a bilinear form on $Sym_2(\mathbb C TS)$ as follows. Let $(X_1,X_2)$ be any linear basis for $\mathbb CT_pS$, let $g_{ij} = g(X_i, X_j)$, and $(g^{ij}) = (g_{ij})^{-1}$. Then, for all $\tau_1,\tau_2\in \mathbb C T_p S$, define
\begin{equation}
	\label{eq: metric on symmetric forms}
	<{\tau_1, \tau_2}>_g = \sum_{i,j,k, \ell=1,2} \tau_1(X_i,X_j)\tau_2(X_k, X_\ell) g^{ik} g^{j\ell}
\end{equation}
The definition of $<{\tau_1, \tau_2}>_g$ is independent from the choice of the basis $(X_1, X_2)$. Equivalently, considering the endomorphisms $\tau_1^\sharp, \tau_2^\sharp\in End(\C TS)$ defined by $\tau_k=g(\tau_k^{\sharp}, \cdot)$, we have $<{\tau_1, \tau_2}>_g=tr(\tau_1^{\sharp}\circ \tau_2^{\sharp})$.

\begin{lemma}
	\label{lemma: metric on hqd}
	Let $g=g(c_1, \overline{c_2})$ be a Bers metric, and let $q_1\in \HQD(c_1)$, $\overline{q_2} \in \HQD(\overline{c_2})$. Then,
	\[
	<q_1, \overline{q_2}>_g dA_g= 2i \frac{\phee \overline{\psi}}{\varrho} dz\wedge d\overline{w}\ 
	\]
	where $g=\varrho dzd\overline w$, $q_1=\phee dz^2$, and $\overline{q_2}=\overline{\psi} d\overline w$.
\end{lemma}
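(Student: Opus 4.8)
The plan is to exploit the basis-independence of the pairing \eqref{eq: metric on symmetric forms} and to evaluate everything in the most convenient frame, namely the frame given by the two isotropic directions of $g$. Working on the universal cover with $g=\rho\, dz\, d\overline w$, $q_1=\phee\, dz^2$, and $\overline{q_2}=\overline\psi\, d\overline w^2$, I recall from Remark \ref{rmk: from g to c1c2} that the isotropic directions of $g$ are exactly $Span_\C(\partial_{\overline z})$ and $Span_\C(\partial_w)$; since these are two distinct points of $\mathbb P(\C T_pS)$, the pair $\{\partial_{\overline z},\partial_w\}$ is a $\C$-basis of $\C T_pS$, and I would run the index computation of \eqref{eq: metric on symmetric forms} in this basis.

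First I would write down the Gram matrix of $g$ in this frame. Using $dz(\partial_{\overline z})=0$, $d\overline w(\partial_w)=0$, and the symmetric-product convention $g(X,Y)=\tfrac{\rho}{2}\big(dz(X)d\overline w(Y)+d\overline w(X)dz(Y)\big)$, the two diagonal entries vanish (this is precisely the isotropy of the frame) and the only surviving entry is the off-diagonal $g(\partial_{\overline z},\partial_w)=\tfrac{\rho}{2}\,\partial_{\overline z}\overline w\,\partial_w z$. Hence the Gram matrix is anti-diagonal, and its inverse is again anti-diagonal with the reciprocal entry; this is what makes the quadruple contraction in \eqref{eq: metric on symmetric forms} completely transparent and avoids any genuine matrix inversion.

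Next I would evaluate the two quadratic differentials on the same frame. Because $dz(\partial_{\overline z})=0$, the tensor $q_1=\phee\, dz^2$ pairs nontrivially only on $(\partial_w,\partial_w)$, giving $q_1(\partial_w,\partial_w)=\phee\,(\partial_w z)^2$; symmetrically $d\overline w(\partial_w)=0$ forces $\overline{q_2}$ to survive only on $(\partial_{\overline z},\partial_{\overline z})$, giving $\overline{q_2}(\partial_{\overline z},\partial_{\overline z})=\overline\psi\,(\partial_{\overline z}\overline w)^2$. Plugging these into \eqref{eq: metric on symmetric forms}, a single term of the sum survives, namely $q_1(\partial_w,\partial_w)\,\overline{q_2}(\partial_{\overline z},\partial_{\overline z})$ times the square of the off-diagonal entry of the inverse Gram matrix. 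The Jacobian factors $(\partial_w z)^2$ and $(\partial_{\overline z}\overline w)^2$ then cancel against the square of $\tfrac{\rho}{2}\,\partial_{\overline z}\overline w\,\partial_w z$, leaving the clean value $\inner{q_1,\overline{q_2}}_g=\tfrac{4\,\phee\,\overline\psi}{\rho^2}$. Finally, multiplying by the area form $dA_g=\tfrac{i}{2}\rho\, dz\wedge d\overline w$ from \eqref{eq: area form of g} yields $\inner{q_1,\overline{q_2}}_g\, dA_g=2i\,\tfrac{\phee\,\overline\psi}{\rho}\,dz\wedge d\overline w$, as claimed.

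The computation is essentially bookkeeping, so I do not expect a conceptual obstacle; the one point requiring care is the simultaneous use of the two coordinate systems $z$ (for $c_1$) and $w$ (for $c_2$) on the same real surface, keeping track of which partial derivatives vanish ($\partial_{\overline z}z=0$ and $\partial_w\overline w=0$) and checking that the Jacobian factors relating the two frames cancel exactly. The decisive simplification is the choice of the isotropic frame $\{\partial_{\overline z},\partial_w\}$ rather than the holomorphic frame $\{\partial_z,\partial_{\overline z}\}$: it makes the Gram matrix anti-diagonal, trivializes its inversion, and isolates the unique surviving term of the contraction.
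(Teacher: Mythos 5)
Your proposal is correct and follows essentially the same route as the paper's proof: both evaluate the contraction in the isotropic frame $\{\partial_{\overline z},\partial_w\}$, where the Gram matrix is anti-diagonal, the single surviving term is $q_1(\partial_w,\partial_w)\,\overline{q_2}(\partial_{\overline z},\partial_{\overline z})\,\big(g(\partial_{\overline z},\partial_w)\big)^{-2}$, and the Jacobian factors $(\partial_w z)^2$, $(\partial_{\overline z}\overline w)^2$ cancel to give $4\phee\overline\psi/\rho^2$, which multiplied by $dA_g=\frac i2\rho\,dz\wedge d\overline w$ yields the claim. The only cosmetic slip is writing $\inner{q_1,\overline{q_2}}_g$ for the pointwise pairing $<q_1,\overline{q_2}>_g$ rather than for its integral, which does not affect the argument.
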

\begin{proof}
	Consider on $\widetilde S$ the basis $(\partial_{\overline z},  \partial_w)$. 
	Then, $0=q_1(\partial_{\overline z}, \partial_{\overline z})=g(\partial_{\overline z}, \partial_{\overline z})=g(\partial_w,\partial_w)= \overline{q_2}(\partial_w,\partial_w)$. Recalling the description of the area form as in Equation $\eqref{eq: area form of g}$, we get:
\begin{align*}
	<{q_1, \overline{q_2}}>_g dA_g &=  q_1(\partial_{w}, \partial_{w}) \overline{q_2}(\partial_{\overline z} , \partial_{\overline z} ) \frac{1}{(g(\partial_{\overline z}, \partial_w))^2} dA_g=\\
	&= \frac {\phee (\partial_w z)^2\  \overline{\psi}(\partial_{\overline z} \overline w)^2}{\frac 1 4 \varrho^2 (\partial_{\overline z} \overline w)^2 (\partial_{w}z)^2} \cdot \frac i 2 \varrho dz\wedge d\overline{w}=\\
	&=2i\frac{\phee \overline\psi}{\varrho}dz\wedge d\overline w\ .
\end{align*}
\end{proof}

\begin{remark}
    In the notations above, by Remark \ref{rmk: models for the tangent}, the element $[\mu]\in T_{[c_1]}\mathcal T(S)$ corresponding to $\frac d {dt}_{|t=0} c_+(g+t\overline {q_2})$ is represented by $\mu=\frac{\overline \psi \partial_{\overline z}\overline w}{\varrho}\frac{d\overline z}{dz}$. Therefore, by \eqref{eq: pairing WP},
    \[
    q_1(\mu)=\frac 1 4 \int_S  <q_1, \overline{q_2}>_g dA_g\ .
    \]
\end{remark}

\begin{lemma}
	\label{lemma: isotopy invariance of the integral}
	Let $g=g(c_1, \overline{c_2})$ be a Bers metric, let $\phi_1,\phi_2\in \mathrm{Diff}_0(S)$, and consider $g'= g(\phi_1^*(c_1), \phi_2^*(\overline{c_2}))$. 
	
	Then, the $2$-forms \[	<{q_1, \overline{q_2}}>_g dA_g\qquad  \text{and}\qquad 	<\phi_1^*(q_1), \phi_2^*(\overline{q_2})>_{g'} dA_{g'}\] on $S$ differ by an exact form. 
	In other words,
	\begin{equation}
\label{eq: integral isotopy invariance}
	\int_S \frac{\phee \overline{\psi}}{\varrho} dz\wedge d\overline{w}= \int_S \frac{\phi_1^*(\phee) \phi_2^*(\overline{\psi})}{\varrho'} \phi_1^*(dz)\wedge \phi_2^*(d\overline{w})
	\end{equation}
	where $g'= \varrho' \phi_1^*(dz) \phi_2^*(d\overline{w})$.
\end{lemma}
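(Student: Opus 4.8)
The plan is to realize both $2$-forms as pull-backs, under two $\rho$-equivariant homotopic maps, of a single closed holomorphic $2$-form living on $\Omega_\rho^+\times\Omega_\rho^-\subset\GG$, and then to run a transgression (homotopy) argument that produces a \emph{$\pi_1(S)$-invariant} primitive of their difference. Once the difference has an invariant primitive it descends to a genuine exact form on the closed surface $S$, and Stokes' theorem yields \eqref{eq: integral isotopy invariance}.

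First I would use Lemma \ref{lemma: metric on hqd} to observe that the integrand $\tfrac{\phee\overline\psi}{\rho}\,dz\wedge d\overline w=\tfrac{1}{2i}<q_1,\overline{q_2}>_g dA_g$ is intrinsic, so I am free to pick convenient coordinates. I take the uniformizing ones $z=f_1$, $\overline w=\overline{f_2}$, so that $g=\sigma^*\inners_\GG$ for the equivariant immersion $\sigma=(f_1,\overline{f_2})\colon\widetilde S\to\GG$ of Theorem \ref{thm: GC in G}. Since $\phi_1,\phi_2\in\mathrm{Diff}_0(S)$ lift to $\pi_1(S)$-equivariant diffeomorphisms of $\widetilde S$, the holonomy of $g'=g(\phi_1^*c_1,\phi_2^*\overline{c_2})$ is the same $[\rho]$ as that of $g$; by the uniqueness clause of Bers' Theorem \ref{Bers Theorem} the two uniformizing maps attached to $g'$ are $f_1\circ\phi_1$ and $\overline{f_2}\circ\phi_2$, so the immersion associated with $g'$ is $\sigma'=(f_1\circ\phi_1,\overline{f_2}\circ\phi_2)$, again $\rho$-equivariant into $\Omega_\rho^+\times\Omega_\rho^-$.

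Next I introduce the auxiliary $2$-form on $\Omega_\rho^+\times\Omega_\rho^-$. Writing $q_1=\Phi(u)\,du^2$ and $\overline{q_2}=\widetilde\Psi(v)\,dv^2$ in the coordinates $u=f_1$, $v=\overline{f_2}$, with $\Phi,\widetilde\Psi$ holomorphic, I set
\[
\eta=-\tfrac14\,\Phi(u)\,\widetilde\Psi(v)\,(u-v)^2\,du\wedge dv .
\]
Because $\rho=-4/(u-v)^2$ in these coordinates, a direct computation gives $\sigma^*\eta=\tfrac{\phee\overline\psi}{\rho}dz\wedge d\overline w$ and $\sigma'^*\eta$ equal to the right-hand integrand of \eqref{eq: integral isotopy invariance}. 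The form $\eta$ is holomorphic, hence $d$-closed. Moreover it is invariant under the diagonal action of $\rho(\pi_1(S))\subset\PSL(2,\C)$: for a Möbius $T$ one has $(Tu-Tv)^2=(u-v)^2\,T'(u)T'(v)$ and $d(Tu)\wedge d(Tv)=T'(u)T'(v)\,du\wedge dv$, while the quadratic-differential equivariance reads $\Phi(Tu)(T'u)^2=\Phi(u)$ and $\widetilde\Psi(Tv)(T'v)^2=\widetilde\Psi(v)$; the four factors of $T'$ cancel, giving $T^*\eta=\eta$.

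Finally I would build the equivariant homotopy $\sigma^s=(f_1\circ\phi_1^s,\overline{f_2}\circ\phi_2^s)$, $s\in[0,1]$, where $\phi_i^s$ are lifts of isotopies to the identity chosen to commute with the deck transformations; since $\Omega_\rho^+$ and $\Omega_\rho^-$ are disjoint and $\rho$-invariant, each $\sigma^s$ lands in $\Omega_\rho^+\times\Omega_\rho^-$ and is $\rho$-equivariant. Setting $F(p,s)=\sigma^s(p)$ and applying the homotopy formula to the closed form $\eta$ gives
\[
\sigma'^*\eta-\sigma^*\eta=d\Big(\int_0^1 \iota_{\partial_s}F^*\eta\,ds\Big)=:d\xi .
\]
As $\eta$ is $\rho$-invariant and $F$ is $\pi_1(S)$-equivariant, $F^*\eta$ is invariant, hence $\xi$ is $\pi_1(S)$-invariant and descends to a smooth $1$-form on $S$; this shows the two $2$-forms differ by an exact form, and Stokes on the closed $S$ yields the integral identity. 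The main obstacle is precisely this equivariance bookkeeping: one must choose the lifts of the isotopies so that the transgression primitive $\xi$ is $\pi_1(S)$-invariant, for otherwise exactness would only hold on $\widetilde S$ (where every closed form is trivially exact) and would say nothing on the compact quotient $S$.
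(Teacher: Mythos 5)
Your proposal is correct, and it reaches the conclusion by a genuinely different route than the paper. The paper's proof shares your first step (by uniqueness in Bers' Theorem \ref{Bers Theorem}, the uniformizing maps attached to $g'$ are $f_1\circ\phi_1$ and $\overline{f_2}\circ\phi_2$), but then proceeds computationally on $\widetilde S$: it first reduces to the case $\phi_1=\mathrm{id}$ by a change of variables under the integral, then reduces to the infinitesimal statement along an isotopy $\phi_t$, computes $\frac{d}{dt}\big|_{t=0}$ of the integrand with Lie-derivative calculus to exhibit the explicit primitive $-(z-\overline w)^2\phee\,\overline\psi\,\beta\,dz$ (where $X=\alpha\partial_w+\beta\partial_{\overline w}$ generates the isotopy), and finally checks the $\pi_1(S)$-invariance of this $1$-form by hand using the M\"obius identities. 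Your proof replaces the differentiate-then-integrate scheme by a global transgression: you package the integrand as the pullback, under the equivariant immersions $\sigma=(f_1,\overline{f_2})$ and $\sigma'=(f_1\circ\phi_1,\overline{f_2}\circ\phi_2)$, of a single $2$-form $\eta$ on $\Omega_\rho^+\times\Omega_\rho^-$, which is closed for structural reasons (a holomorphic $(2,0)$-form on a complex surface) and $\rho$-invariant by exactly the same M\"obius algebra the paper uses for its primitive; the equivariant lift of the isotopies then hands you an invariant primitive for free via the homotopy formula. What each approach buys: the paper's argument is elementary and self-contained, producing an explicit primitive, at the cost of two reduction steps and an invariance check done by hand; yours is more conceptual, makes the invariance of the primitive automatic from equivariance (the point you rightly flag as the crux), and makes transparent that the mechanism is homotopy invariance of integration of a closed invariant form on the model space $\GG$ --- indeed the paper's infinitesimal computation is precisely the infinitesimal shadow of your transgression. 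Your care in choosing the lifts $\phi_i^s$ starting at the identity (so that they commute with deck transformations) is exactly what is needed for the primitive to descend to $S$, and your appeal to Lemma \ref{lemma: metric on hqd} correctly disposes of the factor $2i$ between the integrand and $\langle q_1,\overline{q_2}\rangle_g\,dA_g$.
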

\begin{proof}
	
		Let $(\sigma_1,\overline \sigma_2)= (\sigma_1 (c_1, \overline{c_2}),\overline {\sigma_2}\ccpair)$ be the pair of developing maps as in Bers Theorem (Theorem \ref{Bers Theorem}).
	Observe that, for all $\phi_1,\phi_2\in \mathrm{Diff}_0(S)$, $\sigma_1 (\phi_1^*(c_1), \phi_2^*(\overline{c_2}))=\sigma_1\circ \phi_1$ and $\overline {\sigma_2}(\phi_1^*(c_1), \phi_2^*(\overline{c_2}))= \overline {\sigma_2}\circ \phi_2$.
	Therefore, considering the global coordinates on $\widetilde S$ given by $z=\sigma_1$, $\overline w=\overline{\sigma_2}$, we have that
	\begin{align*}
g(\phi_1^*(c_1), \phi_2^*(\overline{c_2}))&=-\frac{4}{((z\circ \phi_1)-(\overline w\circ \phi_2))^2} (\phi_1^*dz)\cdot (\phi_2^*d\overline w)\ . 
	\end{align*}
We denote $\varrho(\phi_1, \phi_2)=  -\frac{4}{((z\circ \phi_1)-(\overline w\circ \phi_2))^2}$, so that $\nobreak{g(\phi_1^*(c_1), \phi_2^*(\overline{c_2}))= \varrho(\phi_1, \phi_2)(\phi_1^*dz)\cdot (\phi_2^*d\overline w)}$. 

Now, since pulling back a 2-form through an isotopy does not change its integral, we have that
\begin{align*}
	\int_S \frac{\phi_1^*(\phee) \phi_2^*(\overline{\psi})}{\varrho(\phi_1,\phi_2)} \phi_1^*(dz)\wedge \phi_2^*(d\overline{w})&=
\int_{S} (\phi_1^{-1})^* \left(\frac{(\phee\circ \phi_1) (\overline{\psi}\circ \phi_2)}{\varrho(\phi_1, \phi_2)} \phi_1^*(dz)\wedge \phi_2^*(d\overline{w})\right) =\\
&= \int_S \frac{\phee\cdot (\overline \psi \circ \phi_2\circ\phi_1^{-1} )}{\frac{-4}{(z- (\overline w\circ  \phi_2\circ\phi_1^{-1} ))^2} } dz\wedge  (\phi_2\circ\phi_1^{-1})^*(d\overline w)\\
&= \int_S \frac{\phee\cdot (\overline \psi \circ \phi_2\circ\phi_1^{-1} ) }{\varrho(id, \phi_2\circ\phi_1^{-1})} dz\wedge  (\phi_2\circ\phi_1^{-1})^*(d\overline w)\ .
\end{align*}
	As a result, in the statement of the Lemma we can assume without loss of generality that $\phi_1=id$ and that we are only deforming $\overline {c_2}$ with an isotopy.
	\vspace{3pt}
	
	Also, observe that in order to prove the statement it is sufficient to show the infinitesimal version of it, namely that for all smooth path $\phi_t\in \mathrm{Diff}_0(S)$, $\phi_0=id$, the 2-form 
	\[
	\frac{d}{dt}_{|{t=0}} \left( -\frac 1 4 {\phee\cdot (\overline{\psi}\circ\phi_t) } (z-(\overline w \circ \phi_t))^2 dz\wedge \phi_t^*(d\overline w) \right)
	\]
	is a $\pi_1(S)$-invariant exact form on $\widetilde S$, then the Lemma follows by the generality of the initial $g$, by the fact that the form depends continuously on $t$ and that $S$ is compact, so integration and derivation with respect to $t$ commute.

Denote $X= \frac d {dt}| _{t=0} \phi_t$, $X$ being a tangent $\pi_1(S)$-invariant vector field on $S$, and denote $X=\alpha\partial_w +\beta\partial_{\overline w}$. Using the standard properties of the Lie derivative $\mathcal L$, we have that 
\begin{align*}
		{\frac{d}{dt}}_{|{t=0}} \Big( \phee\cdot (\overline{\psi}\circ\phi_t)  &(z-(\overline w \circ \phi_t))^2 dz\wedge \phi_t^*(d\overline w)\Big)= \\
		=&\phee (\partial_X \overline{\psi}) (z-\overline w)^2 dz\wedge d\overline w- 2\phee\overline{\psi}(z-\overline w)(\partial_X \overline w) dz \wedge d\overline{w} +\\
		&+ \phee \overline \psi (z-\overline w)^2 dz\wedge \mathcal L_X(d\overline w)=\\
		=&\phee dz\wedge\left( \beta (z-\overline w)^2 d\overline \psi - 2 \overline \psi (z-\overline w) \beta d \overline w +\overline \psi (z-\overline w)^2 d\beta \right)=\\
		=&\phee dz \wedge d((z-\overline w)^2 \overline \psi \beta)=\\
		=& - d\left((z-\overline w)^2\phee \overline \psi \beta dz \right)
\end{align*}
were the last step follows from the fact that $\phee dz$ is closed since it is $c_1$-holomorphic.

	One can now easily show that the 1-form $\omega:= (z-\overline w)^2\phee \overline \psi \beta dz $ is $\pi_1(S)$-invariant. This can be seen by observing that, since $g$ is $\pi_1(S)$-invariant, $(z-\overline w)^2\partial_z\otimes \partial_{\overline w} =-\frac 1 {4\varrho} \partial_z\otimes \partial_{\overline w}$ is also $\pi_1(S)$-invariant as well. Moreover, for all $v\in TS$, \[
   ( q_1\otimes \overline {q_2})\left(v\otimes (z-\overline w)^2\partial_z\otimes \partial_{\overline w}\otimes \beta \partial_{\overline w}\right)= \omega(v)\ ,
    \]
we conclude that $\omega$ is $\pi_1(S)$-invariant as well. We thank the referee for this observation.

\end{proof}

By integrating the pairing in Equation \eqref{eq: metric on symmetric forms}, we can define a bilinear form on $\HQD(c_1)\oplus \HQD(\overline{c_2})$ by 
\begin{equation}
	\label{eq: def metrica in generale}
	\inner{\tau_1, \tau_2}_g= \frac 1 {8}\int_S <\tau_1, \tau_2>_g dA_g\ ,
\end{equation}
where $g=g(c_1,\overline{c_2})$ and $dA_g$ is the area form compatible with the orientation as in Equation \eqref{eq: area form of g}.

\begin{prop}
	\label{prop: generalita su metrica Riemanniana holo}
	Let $g=g(c_1,\overline{c_2})$ be a Bers metric.
	\begin{enumerate}
		\item $\inner{q_1, q'_1}_g=0$, for all $q_1, q'_1\in \HQD(c_1)$.
		\item $\inner{\overline{q_2}, \overline{q_2}'}_g=0$ for all $\overline {q_2}, \overline{q_2}'\in \HQD(\overline{c_2})$\ .
		\item Let $g=\varrho dzd\overline w$, $q_1=\phee dz^2 \in \HQD(c_1)$, $\overline{q_2}=\overline{\psi} d\overline w^2\in \HQD(\overline{c_2})$, then
		\[
		\inner{q_1, \overline{q_2}}_g=\frac 1 4  i \int_S \frac{\phee\cdot \overline{\psi}}{\varrho}dz\wedge d\overline w \ .
		\]
		\item if $g'=g(\phi_1^*(c_1), \phi_2^*(\overline{c_2} ))$, with $\phi_1,\phi_2\in \mathrm{Diff}_0(S)$, then 
\[
\inner{q_1, \overline{q_2}}_g = \inner{\phi_1^*(q_1),\phi_2^*( \overline{q_2})}_{g'}\ .
\]

As a result, the bilinear form $\inners$ defined in Equation \eqref{eq: def metrica in generale} induces a $\mathbb C$-bilinear form on $T \QF(S)$.

\item  The restriction of the bilinear form $\inners$ to the Fuchsian locus is real-valued and coincides with the Weil-Petersson metric.

	\end{enumerate}

\end{prop}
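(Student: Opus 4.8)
The plan is to reduce all five statements to pointwise linear algebra in the isotropic frame together with the two integral lemmas already available. For parts (1) and (2), I would work in the basis $\{\partial_{\overline z}, \partial_w\}$ of $\C TS$ given by the isotropic directions of $g$ (Remark \ref{rmk: from g to c1c2}). In this basis the Gram matrix of $g$ is off-diagonal: $g_{11}=g_{22}=0$ and $g_{12}=g_{21}=\frac12\rho\,\partial_w z\,\partial_{\overline z}\overline w$, so its inverse is again off-diagonal with $g^{11}=g^{22}=0$. Since $q_1=\phee dz^2$ and $q_1'=\phee' dz^2$ are supported only on the $(\partial_w,\partial_w)$ entry, the sole surviving term of $<q_1,q_1'>_g=\sum q_1(X_i,X_j)q_1'(X_k,X_\ell)g^{ik}g^{j\ell}$ is the one with $g^{22}g^{22}=0$, whence $<q_1,q_1'>_g\equiv 0$ and $\inner{q_1,q_1'}_g=0$; the identical computation with $g^{11}=0$ gives (2). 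Part (3) is then immediate from Lemma \ref{lemma: metric on hqd}, which states $<q_1,\overline{q_2}>_g\,dA_g=2i\frac{\phee\overline\psi}{\rho}dz\wedge d\overline w$: dividing by $8$ and integrating yields the claimed formula.

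For part (4), I would first use $\C$-bilinearity and symmetry of $\inners_g$ together with (1)--(2): for vectors $q_1+\overline{q_2}$ and $q_1'+\overline{q_2}'$ only the mixed terms $\inner{q_1,\overline{q_2}'}_g+\inner{q_1',\overline{q_2}}_g$ survive. Each mixed term is expressed by (3) as an integral, and Lemma \ref{lemma: isotopy invariance of the integral} (Equation \eqref{eq: integral isotopy invariance}) asserts precisely that this integral is unchanged when $(c_1,\overline{c_2},q_1,\overline{q_2})$ is replaced by $(\phi_1^*c_1,\phi_2^*\overline{c_2},\phi_1^*q_1,\phi_2^*\overline{q_2})$. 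This gives the equality $\inner{q_1,\overline{q_2}}_g=\inner{\phi_1^*q_1,\phi_2^*\overline{q_2}}_{g'}$. Well-posedness on $T\QF(S)$ then follows because, by Proposition \ref{prop: g+q ben posto}, two Bers metrics with the same holonomy have their metric-model tangent identifications related by exactly these pullbacks $q_1\mapsto\phi_1^*q_1$ and $\overline{q_2}\mapsto\phi_2^*\overline{q_2}$, so the invariance just proven shows the form descends.

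For part (5), on the Fuchsian locus $c_1=c_2=c$, with $g_0=g(c,\overline c)$ the hyperbolic metric $\alpha_0\,dz\,d\overline z$ and $\overline w=\overline z$. By Remark \ref{rmk: differential of the fuchsian map}, the tangent vectors to $\mathrm{Fuch}(S)$ have the form $q+\overline q$ with $q=\phee dz^2$. Using (1),(2) and (3), for $q=\phee_1 dz^2$ and $q'=\phee_2 dz^2$ I obtain $\inner{q+\overline q,\,q'+\overline{q'}}_{g_0}=\frac{i}{4}\int_S\frac{\phee_1\overline{\phee_2}+\overline{\phee_1}\phee_2}{\alpha_0}\,dz\wedge d\overline z$. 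Since $i\,dz\wedge d\overline z=2\,dx\wedge dy$ is real and $\phee_1\overline{\phee_2}+\overline{\phee_1}\phee_2=2\,\mathrm{Re}(\phee_1\overline{\phee_2})$, the value is real and equals $\int_S\frac{\mathrm{Re}(\phee_1\overline{\phee_2})}{\alpha_0}\,dx\wedge dy$, which is exactly the Weil--Petersson pairing $\inner{[\overline q/g_0],[\overline{q'}/g_0]}_{WP}$ recalled in Section \ref{introsection: WP metric} (using $\frac{i}{2}dz\wedge d\overline z=dx\wedge dy$).

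I expect no single hard step: the analytic content has been front-loaded into Lemmas \ref{lemma: metric on hqd} and \ref{lemma: isotopy invariance of the integral}. The point demanding the most care is the well-posedness in (4), where one must correctly match the abstract change-of-representative identification of $T\QF(S)$ coming from Proposition \ref{prop: g+q ben posto} with the pullback action $\phi_i^*$ on quadratic differentials, so that Lemma \ref{lemma: isotopy invariance of the integral} is indeed the relevant invariance. The only remaining delicacy is the constant and orientation bookkeeping in (5) needed to confirm both reality and the exact normalization matching the Weil--Petersson metric.
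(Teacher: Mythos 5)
Your proposal is correct and follows essentially the same route as the paper: the isotropic-frame computation $g^{22}=g^{11}=0$ for (1)--(2), Lemma \ref{lemma: metric on hqd} for (3), Lemma \ref{lemma: isotopy invariance of the integral} for (4), and the Fuchsian-locus identification of Remark \ref{rmk: differential of the fuchsian map} for (5). Your explicit appeal to Proposition \ref{prop: g+q ben posto} to match the change-of-representative identification of $T\QF(S)$ with the pullbacks $\phi_i^*$, and your polarized (general-pair) computation in (5), merely spell out steps the paper's proof leaves implicit.
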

\begin{proof}
	\begin{enumerate}
		\item Consider the frame $X_1:=\partial_{\overline z}$ and $X_2:=\partial_w$, we have that $q_1(X_i, X_j)\ne 0$ if and only if $i=j=2$, but $g(\partial_{\overline z},\partial_{\overline z})=0$ hence $g^{22}\equiv0$: by Equation \eqref{eq: metric on symmetric forms} we conclude that $<q_1,q_1'>_g=0$.
		\item As in the previous step.
		\item Follows directly by Lemma \ref{lemma: metric on hqd} and Equation \eqref{eq: def metrica in generale}.
	\item This follows from the previous step and from Lemma \ref{lemma: isotopy invariance of the integral}.
	
	\item 
	Recall the description of the differential of the Fuchsian map $\mathcal T(S)\xrightarrow{\sim} \mathrm{Fuch}(S)\subset \QF(S)$ in Remark \ref{rmk: differential of the fuchsian map}. We therefore see that the pushed-forward Weil-Petersson metric on $\mathrm{Fuch}(S)$ (defined as in Section \ref{introsection: WP metric}) is given by
	\[
	\inner{q+\overline q, q+\overline q}_{WP}= \left( \int_S \frac{\phee \overline \phee}{\varrho_0} \cdot  \frac i 2 dz\wedge d\overline z \right) \ .
	\]
	Using the previous steps, for all hyperbolic metric $g_0=g(c,c)$ on $S$ we have $\inner{q+\overline q, q+\overline q}_{WP} =   2\inner{q,\overline q}_{g_0} = \inner{q+\overline q, q+\overline q}_{g_0}$.
	\end{enumerate}
\end{proof}

\subsection{The bilinear form is holomorphic Riemannian}
\label{section: the bilinear form is holo Riem}

The aim of this section is to prove that $\inners$ is in fact a holomorphic Riemannian metric.

We fix a framework that will be useful for further calculations.
Fix $(c_1^0, \overline{c_2}^0)\in \CSCS$.

Let $q_1\in \HQD([c_1^0])$. In the identification $\QF([c^0_1], \bullet)\cong\mathcal T(\overline S)$, let $\alpha_{q_1}\in \Gamma(\mathcal T(\overline S))$ be the holomorphic vector field corresponding to infinitesimal deformations by $q_1$, namely $${\alpha_{q_1}}|_{[\overline{c_2}]}=\frac d {dt}_{|t=0} (g(c_1^0, \overline{c_2}) +tq_1) .$$
In the identification $\QF(S)\cong\mathcal T(S)\times\mathcal T(\overline S)$, we also define the holomorphic vector field $L_{q_1}\in \Gamma(T\QF(S))$ as 
\[
L_{q_1}\cong(0, \alpha_{q_1})
\]

Similarly, for all $\overline{q_2}\in \HQD([\overline{c_2}^0])$, we define $\beta_{\overline{q_2}}\in \Gamma (\mathcal T (S))$ as the vector field corresponding to $\overline{q_2}$ in the identification $\mathcal T(\overline S)\cong \QF(\bullet, [\overline{c_2}^0])$ and $R_{\overline{q_2}}\in \Gamma(T\QF(S))$ as the holomorphic vector field corresponding to $({\beta}_{\overline{q_2}},0)$ in the splitting $T\QF(S)\cong T\mathcal T(S)\oplus T\mathcal T(\overline S)$.

\begin{lemma}
	\label{lem: good frame}
	With the above notation:
	\begin{enumerate}[noitemsep,topsep=0pt]
    \item $L_{q_1}$ coincides with $q_1$ on $\QF([c_1^0], \bullet)$ and $R_{\overline{q_2}}$ coincides with $\overline{q_2}$ on $\QF(\bullet, [\overline{c_2}])$.
    	\item $\inner{L_{q_1}, L_{q_1'}}\equiv \inner{R_{\overline{q_2}}, R_{\overline{q_2}'}}\equiv 0$.
		\item $[ L_{ q_1}, R_{\overline {q_2}}]\equiv[R_{\overline{q_2}}, R_{\overline{q_2}'}]\equiv[L_{q_1}, L_{q_1'}]\equiv 0$.
	\end{enumerate}
\end{lemma}
\begin{proof}
Items $(1)$ and $(2)$ follow directly by construction and by the fact that Bers slices are isotropic.
To prove item $(3)$, observe that the vector fields $q_1$ and $q'_1$ commute on $\QF([c^0_1],\bullet)$, as a consequence of the fact that, via the map ${Schw}_+\colon\QF([c^0_1],\bullet)\hookrightarrow\HQD([c_1^0])$, they correspond to the infinitesimal sum by a vector in a vector space. As a result, $\alpha_{q_1}$ and $\alpha_{{q'_1}}$ commute on $\mathcal T(\overline S)$. By applying the analog argument for $R$-type vector fields, the statement then follows by seeing the Lie brackets in the decomposition $T\QF(S)\cong T\mathcal T(S)\oplus T\mathcal T(\overline S)$.
\end{proof}

\begin{lemma}
	\label{lemma: inners tra Lk e Rj costante sulla slice}
	Let $q_1\in \HQD(c_1^0)$ and $\overline{q_2}\in \HQD(\overline{c_2}^0)$. 
	
	With the above notation, $\inner{L_{q_1}, R_{\overline{q_2}} }$ is constant on $\QF([c_1^0],\bullet)\cup \QF(\bullet, [\overline{c_2}^0])$, and it coincides with $\frac 1 2 q_1(\beta_{\overline{q_2}})= \frac 1 2\overline{q_2}(\overline{\alpha}_{q_1}) $. 
\end{lemma}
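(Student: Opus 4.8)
The plan is to reduce the statement to a single pointwise pairing identity and then to read off, slice by slice, which element of the metric model of Theorem \ref{thm: schwartzian} represents each of $L_{q_1}$ and $R_{\overline{q_2}}$ at a given point. Throughout I will freely use that $\inners$ is representative-independent (Proposition \ref{prop: generalita su metrica Riemanniana holo}(4)) and the identification $T\QF(S)\cong\HQD(c_+)\oplus\HQD(c_-)$ from \eqref{eq: tangent model}.

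\textbf{Step 1 (a pairing identity).} First I would record the following local computation. Let $h=\rho\,dz\,d\overline w$ be any Bers metric with $c_+(h)=c_1$ and $c_-(h)=\overline{c_2}$, and let $a=\phee\, dz^2\in\HQD(c_1)$, $\overline b=\overline\psi\, d\overline w^2\in\HQD(\overline{c_2})$. Write $\mu_a=\frac{\phee\,\partial_w z}{\rho}\frac{dw}{d\overline w}$ and $\mu_{\overline b}=\frac{\overline\psi\,\partial_{\overline z}\overline w}{\rho}\frac{d\overline z}{dz}$ for the Beltrami differentials of Lemma \ref{lemma: beltrami differentials}. Since $d\overline w=\partial_z\overline w\,dz+\partial_{\overline z}\overline w\,d\overline z$, one has $dz\wedge d\overline z=(\partial_{\overline z}\overline w)^{-1}\,dz\wedge d\overline w$, so the pairing \eqref{eq: pairing WP} gives
\[
a(\mu_{\overline b})=\int_S\phee\cdot\frac{\overline\psi\,\partial_{\overline z}\overline w}{\rho}\,\tfrac i2\,dz\wedge d\overline z=\tfrac i2\int_S\frac{\phee\,\overline\psi}{\rho}\,dz\wedge d\overline w .
\]
Comparing with Lemma \ref{lemma: metric on hqd} and Proposition \ref{prop: generalita su metrica Riemanniana holo}(3) yields $\inner{a,\overline b}_h=\frac i4\int_S\frac{\phee\overline\psi}{\rho}\,dz\wedge d\overline w=\tfrac12\,a(\mu_{\overline b})$. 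The conjugate identity $\inner{a,\overline b}_h=\tfrac12\,\overline b(\mu_a)$ follows symmetrically, either by the same change of variables carried out on $\overline S$ (the reversed orientation accounting for the sign) or by applying the previous line to $\overline h$ through Remark \ref{rmk: coniugato Bers metric}.

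\textbf{Step 2 (constancy along $\QF([c_1^0],\bullet)$).} Fix a point $([c_1^0],[\overline{c_2}])$ of this slice and a representative so that $g'=g(c_1^0,\overline{c_2})$ is the Bers metric there. By its defining property, $L_{q_1}$ equals along this slice the field generated by $q_1$, namely $\tfrac{d}{dt}|_{0}[g'+tq_1]$, so in the metric model at $g'$ it is represented by $q_1\in\HQD(c_1^0)$. The field $R_{\overline{q_2}}$ is tangent to $\QF(\bullet,[\overline{c_2}])$ and, under $\QF(\bullet,[\overline{c_2}])\cong\mathcal T(S)$, coincides with $\beta_{\overline{q_2}}$; hence at our point it is represented by the unique $\overline{q_2}'\in\HQD(\overline{c_2})$ whose associated Beltrami differential, via Lemma \ref{lemma: beltrami differentials} and \eqref{eq: models tangent da QF(., c)}, equals $\beta_{\overline{q_2}}|_{[c_1^0]}$. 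Applying Step 1 with $h=g'$, $a=q_1$, $\overline b=\overline{q_2}'$ gives
\[
\inner{L_{q_1},R_{\overline{q_2}}}\big|_{([c_1^0],[\overline{c_2}])}=\inner{q_1,\overline{q_2}'}_{g'}=\tfrac12\,q_1\big(\beta_{\overline{q_2}}|_{[c_1^0]}\big),
\]
which is independent of $[\overline{c_2}]$. Thus $\inner{L_{q_1},R_{\overline{q_2}}}$ is constant on $\QF([c_1^0],\bullet)$, with value $\tfrac12 q_1(\beta_{\overline{q_2}})$.

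\textbf{Step 3 (the symmetric slice and conclusion).} The identical argument on $\QF(\bullet,[\overline{c_2}^0])$, now representing $R_{\overline{q_2}}$ by $\overline{q_2}$ and $L_{q_1}$ by the quadratic differential inducing $\overline\alpha_{q_1}|_{[\overline{c_2}^0]}$ (via \eqref{eq: models tangent da QF(c,.)}), combined with the second identity of Step 1, shows that $\inner{L_{q_1},R_{\overline{q_2}}}$ is constant on $\QF(\bullet,[\overline{c_2}^0])$ with value $\tfrac12\,\overline{q_2}(\overline\alpha_{q_1})$. The two slices meet at $([c_1^0],[\overline{c_2}^0])$, where both computations equal $\inner{q_1,\overline{q_2}}_{g}$, so the two constants agree; this yields simultaneously the asserted constancy and the equality $\tfrac12 q_1(\beta_{\overline{q_2}})=\tfrac12\overline{q_2}(\overline\alpha_{q_1})$. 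The main obstacle is bookkeeping rather than analysis: one must correctly identify, at a point of a slice other than the base point, which element of $\HQD(c_+)\oplus\HQD(c_-)$ represents $L_{q_1}$ and $R_{\overline{q_2}}$ in the metric model based at that point — precisely where the third defining bullet (descent to a fixed field on $\mathcal T(\overline S)$, resp. $\mathcal T(S)$) and Lemma \ref{lemma: beltrami differentials} are used. Once this is settled, Step 1 turns the metric pairing into a fixed cotangent--tangent pairing, which is manifestly constant along the slice.
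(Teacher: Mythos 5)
Your proof is correct and follows essentially the same route as the paper's: at each point of the slice you represent $L_{q_1}$ by $q_1$ and $R_{\overline{q_2}}$ by the unique quadratic differential whose associated Beltrami differential is the fixed $\beta_{\overline{q_2}}|_{[c_1^0]}$, and your Step 1 identity $\inner{a,\overline b}_h=\tfrac12\,a(\mu_{\overline b})=\tfrac12\,\overline b(\mu_a)$ turns the metric pairing into a constant cotangent--tangent pairing. The paper performs exactly this computation (with your Step 1 done inline rather than isolated as a lemma) and likewise concludes by equating the two slice constants at the common point $([c_1^0],[\overline{c_2}^0])$.
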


\begin{proof}
	Denote $L=L_{q_1}$ and $R=R_{\overline{q_2}}$.
	Let $\overline{c_2}\in \mathcal C(\overline S)$, let $g=g(c_1^0, \overline{c_2})=\varrho dz_0 d\overline w$, where $z_0$ is a complex coordinate for $c_1^0$ and $\overline w$ for $\overline{c_2}$. 
	
	On $\QF([c_1^0], \bullet)$, $L$ coincides with $q_1=:\phee dz_0^2$. On the other hand, $R$ in $([c_1^0], [\overline{c_2}])$ is tangent to $\QF(\bullet, [\overline{c_2}])$, so in the metric model \eqref{eq: tangent model} for $\QF(S)$ it coincides in $([c_1^0], [\overline{c_2}])$ with some holomorphic quadratic differential: we denote \[R_{| ([c_1^0], [\overline{c_2}])}=: \overline \tau={\overline \eta d\overline w^2}\in \HQD(\overline{c_2})<T_{([c_1^0], [\overline{c_2}])}\QF(S)\ .\] 
	
	Recall that, on the Bers slice $\QF(\bullet, [\overline{c_2}])\cong \mathcal T(S)$, the vector field $R$ coincides with the vector field ${\beta_{\overline{q_2}}}\in \Gamma (\mathcal T(S))=\Gamma (T\mathrm{Belt}( S ) )$: under the identification in Remark \ref{rmk: models for the tangent}, we get that ${\beta_{\overline{q_2}} }_{|c_1^0}= \left[ \frac{{\overline \eta  } \partial_{\overline{z_0}}\overline w}{\varrho} \frac{d\overline {z_0}}{dz_0} \right]\in \mathrm{Belt}({c_1^0})$. Finally, 
		\begin{align*}
			\inner{L, R} _{|([c_1^0], [\overline{c_2}])} =& \inner{q_1,{\overline{\tau}} }_g= \frac 1 4 i \int_S \frac{\phee {\overline \eta}}{\varrho} dz_0\wedge d\overline w =\\
			&=  \frac 1 2  \int_S \frac{\phee {\overline \eta} \partial_{\overline {z_0} }\overline w} {\varrho} \frac i 2 dz_0\wedge d\overline{z_0} =\frac 1 2 q_1(\beta_{\overline{q_2}})
		\end{align*}
	for all $\overline{c_2}\in \mathcal C(\overline S)$.
	
		In the same fashion, one gets that $\inner{L, R}$ coincides with $\frac 1 2\overline{q_2}(\overline{\alpha}_{q_1}) $ on every point of $\QF(\bullet, [\overline{c_2}])$.
		As a result, one gets that $\frac 1 2 q_1(\beta_{\overline{q_2}})= \frac 1 2\overline{q_2}(\overline{\alpha}_{q_1}) $ which is the value of $	\inner{L, R}$ in $([c_1^0], [\overline{c_2}^0])$.
\end{proof}
Observe that the fact that $\frac 1 2 q_1(\beta_{\overline{q_2}})= \frac 1 2\overline{q_2}(\overline{\alpha}_{q_1}) $ together with Theorem \ref{thm: schwartzian} lead to an alternative proof of McMullen's quasi-Fuchsian reciprocity (see Proposition \ref{prop: new McMullen qF}).

\begin{theorem}
	\label{thm: holomorphic Riemannian}
	The bilinear form $\inners$ on $\QF(S)$ is a holomorphic Riemannian metric.
\end{theorem}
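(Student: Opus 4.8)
The plan is to verify the two defining properties of a holomorphic Riemannian metric in turn: that $\inners$ is a holomorphic section of $\mathrm{Sym}_2(T^{1,0}\QF(S))^*$, and that it is nowhere degenerate. Proposition \ref{prop: generalita su metrica Riemanniana holo} already provides that $\inners$ is a well-posed symmetric $\C$-bilinear form, so only these two points remain. Holomorphicity is the heart of the matter, and I would extract it pointwise from Lemma \ref{lemma: inners tra Lk e Rj costante sulla slice} by exploiting the freedom in the choice of base point.

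For holomorphicity, fix an arbitrary $p=([c_1^0],[\overline{c_2}^0])\in\QF(S)$ and take it as the base point of the construction, choosing $\C$-bases $\{q_1^a\}$ of $\HQD(c_1^0)$ and $\{\overline{q_2}^b\}$ of $\HQD(\overline{c_2}^0)$ and the associated global holomorphic vector fields $\{L_{q_1^a}\}\cup\{R_{\overline{q_2}^b}\}$, which form a holomorphic frame (their holomorphicity rests on Theorem \ref{thm: schwartzian}). Relative to a holomorphic frame, $\inners$ is holomorphic at $p$ precisely when $\bar\partial$ of each component $\inner{e_i,e_j}$ vanishes at $p$. By Remark \ref{rmk: good frame} the components $\inner{L,L'}$ and $\inner{R,R'}$ vanish identically, so only the mixed components $f_{ab}:=\inner{L_{q_1^a},R_{\overline{q_2}^b}}$ require attention. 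Lemma \ref{lemma: inners tra Lk e Rj costante sulla slice} asserts that each $f_{ab}$ is constant along the left slice $\QF([c_1^0],\bullet)$ and the right slice $\QF(\bullet,[\overline{c_2}^0])$ through $p$; since these are complex submanifolds, the differential of $f_{ab}$ restricted to either slice vanishes at $p$, so in particular $\overline{L_{q_1^c}}(f_{ab})|_p=0$ and $\overline{R_{\overline{q_2}^d}}(f_{ab})|_p=0$ for all $c,d$. As $\{L_{q_1^a}|_p\}\cup\{R_{\overline{q_2}^b}|_p\}$ is a basis of $T^{1,0}_p\QF(S)$, the conjugate vectors span $T^{0,1}_p\QF(S)$, whence $\bar\partial f_{ab}|_p=0$. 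Thus $\bar\partial\inners|_p=0$, and since $p$ was arbitrary, $\inners$ is holomorphic.

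For non-degeneracy, I would use that the metric-model splitting $T_p\QF(S)=\HQD([c_1])\oplus\HQD([\overline{c_2}])$ exhibits two $\inners_g$-isotropic subspaces (Proposition \ref{prop: generalita su metrica Riemanniana holo}(1)--(2)), so by the usual linear algebra of a form with two complementary isotropic summands, non-degeneracy is equivalent to the pairing $\HQD(c_1)\times\HQD(\overline{c_2})\to\C$, $(q_1,\overline{q_2})\mapsto\inner{q_1,\overline{q_2}}_g$, being perfect. By the computation in Lemma \ref{lemma: inners tra Lk e Rj costante sulla slice} one has $\inner{q_1,\overline{q_2}}_g=\tfrac12\,q_1(\beta_{\overline{q_2}})$, where $\overline{q_2}\mapsto\beta_{\overline{q_2}}\in\mathrm{Belt}(c_1)$ is the linear isomorphism \eqref{eq: models tangent da QF(., c)} coming from Lemma \ref{lemma: beltrami differentials}. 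Since the natural pairing $\HQD(c_1)\times\mathrm{Belt}(c_1)\to\C$ recalled in Section \ref{introsection T and T*} is non-degenerate, every nonzero $q_1$ pairs nontrivially with some $\beta_{\overline{q_2}}$, hence with some $\overline{q_2}$, and the symmetric argument handles a nonzero $\overline{q_2}$. Therefore the pairing is perfect and $\inners$ is nowhere degenerate.

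The main obstacle is the holomorphicity, and specifically the observation that Lemma \ref{lemma: inners tra Lk e Rj costante sulla slice}, which at first glance only controls the metric along the two Bers slices through a single base point, already forces $\bar\partial$ of the frame components to vanish at that point; the key move is then to let the base point range over all of $\QF(S)$, upgrading this isolated pointwise vanishing into global holomorphicity. The two technical checks to keep honest are that the adapted frame is genuinely holomorphic and that its conjugate directions span $T^{0,1}$ at the base point, both of which follow from the metric model of Theorem \ref{thm: schwartzian}.
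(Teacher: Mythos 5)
Your proof is correct and follows essentially the same route as the paper's: both rest on the holomorphic frame $\{L_{q_1^a}, R_{\overline{q_2}^b}\}$ and Lemma \ref{lemma: inners tra Lk e Rj costante sulla slice}, deducing holomorphicity because the antiholomorphic directions at each point are spanned by vectors tangent to the two Bers slices through it (along which the mixed components $\inner{L,R}$ are constant), and deducing non-degeneracy from the non-degenerate pairing $\HQD([c_1])\times\mathrm{Belt}([c_1])\to\C$ via the isotropy of the two summands. Your pointwise formulation (letting the base point of the frame range over $\QF(S)$ and checking $\bar\partial$ of the frame components at that point) is only a reorganization of the paper's argument, which instead fixes two holomorphic vector fields $X,Y$ and shows that $\inner{X,Y}$ restricts holomorphically to every slice before concluding that its antiholomorphic derivatives vanish.
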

\begin{proof}
	We have to prove that $\inners$ is non-degenerate and holomorphic. 
	
	The fact that the metric is non-degenerate follows directly from Lemma \ref{lemma: inners tra Lk e Rj costante sulla slice}: for all $([c_1^0], [\overline{c_2}^0])\in \TSTS$ one can evaluate the metric on each pair of vector fields $L_{q_1}$ and $R_{\overline{q_2}}$ as defined above, then the pairing on $\mathrm{Belt}{([c])}\times \HQD([c])$ being non-degenerate (see Section \ref{introsection T and T*}) implies that the bilinear form on $\QF(S)$ is non-degenerate as well.
	
	To prove holomorphicity, let $X,Y$ be two holomorphic vector fields on $\QF(S)$. Then the restriction of $\inner{X,Y}$ on each Bers slice $\QF([c^0_1], \bullet)$ is holomorphic, because it can be written as a linear combination through holomorphic functions of the terms $\inner{L_{q_1}, R_{\overline{q_2}}}$, which are constant on the slice $\QF([c^0_1], \bullet)$. Similarly $\inner{X,Y}$ is holomorphic on each left Bers slice $\QF(\bullet, [\overline{c_2}^0])$. Finally, $\inner{X,Y}$ is holomorphic on $\QF(S)$ since its derivative with respect to any antiholomorphic vector is zero.
\end{proof}

\subsection{Uniqueness of holomorphic extensions}

We prove that there is a unique extension of the Weil-Petersson metric to the quasi-Fuchsian space. This is a simple consequence of the complex-analiticity of the metric, but we show it explicitly. 

Let 
\begin{equation}
	\label{eq: holo map cocycles}
	\Lambda\colon \QF(S)\to \Omega \subset \mathbb C^{6\mathrm g -6}
\end{equation}

be a biholomorphism such that the image of the Fuchsian locus is $\Omega\cap \mathbb R^{6\mathrm g -6}$. There are several ways to choose $\Lambda$: for instance, fixing a geodesic lamination $\lambda$ for $S$, $QF(S)$ can be parametrized with an open subset of the vector space of $\faktor{\mathbb C}{2\pi i\mathbb Z}$-valued cocycles along $\lambda$ through the so-called shear-bend coordinates, with $\mathbb R$-valued cocycles corresponding to Fuchsian representations: see \cite{shearbend} for further details.

\begin{lemma}
	\label{lemma: extension of zero is zero}
	Let $\Omega\subset \mathbb C^{N}$ be an open connected subset intersecting $\mathbb R^N$. 
	If $f\colon \Omega\to \mathbb C$ is a holomorphic function such that $f_{|\Omega\cap \mathbb R^N}\equiv 0$, then $f\equiv 0$.
\end{lemma}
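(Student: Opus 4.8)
The plan is to deduce the statement from the identity theorem for holomorphic functions of several complex variables, using the classical fact that $\mathbb R^N$ is a uniqueness set inside $\mathbb C^N$.

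First I would fix a point $x_0 \in \Omega \cap \mathbb R^N$ (nonempty by hypothesis) and choose a polydisc $P = \{z : |z_j - x_0^j| < r \text{ for all } j\}$ with $P \subset \Omega$. Since $f$ is holomorphic on $P$, it expands there as a convergent power series
\[
f(x_0 + \zeta) = \sum_{\alpha \in \mathbb N^N} c_\alpha\, \zeta^\alpha, \qquad c_\alpha = \frac{1}{\alpha!}\,\frac{\partial^{|\alpha|} f}{\partial z^\alpha}(x_0),
\]
where $\zeta = z - x_0 \in \mathbb C^N$.

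The crucial step is to show that all $c_\alpha$ vanish. Restricting to real increments $\zeta = t \in \mathbb R^N$ with $|t_j| < r$, the hypothesis $f|_{\Omega \cap \mathbb R^N} \equiv 0$ gives $\sum_\alpha c_\alpha\, t^\alpha = 0$ for all such $t$. A multivariable power series vanishing on a real neighbourhood of the origin has all coefficients zero, since $c_\alpha = \frac{1}{\alpha!}\,\partial_t^\alpha\big(\sum_\beta c_\beta\, t^\beta\big)\big|_{t=0}$ and the right-hand side is an iterated partial derivative of the zero function. Hence $c_\alpha = 0$ for every $\alpha$, and therefore $f \equiv 0$ on $P$.

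Finally I would globalize via connectedness. Let $A = \{z \in \Omega : \partial^\alpha f(z) = 0 \text{ for all } \alpha\}$. It is closed in $\Omega$, being an intersection of zero sets of the continuous functions $\partial^\alpha f$, and it is open because at any $z_0 \in A$ the Taylor expansion of $f$ centred at $z_0$ vanishes identically, so $f$ and all its derivatives vanish on an entire polydisc around $z_0$. Since $A \supset P \neq \emptyset$ and $\Omega$ is connected, we conclude $A = \Omega$, whence $f \equiv 0$. The main obstacle is the middle step: the passage from ``$f$ vanishes on the totally real slice $\mathbb R^N$'' to ``all complex Taylor coefficients vanish''. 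This is exactly where one uses that $\mathbb R^N$ is a maximal totally real submanifold of $\mathbb C^N$, and it would fail for a generic real slice of the same codimension.
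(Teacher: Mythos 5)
Your proof is correct and follows essentially the same route as the paper: expand $f$ in a power series at a point of $\Omega\cap\mathbb R^N$, deduce that all Taylor coefficients vanish because the restriction to real arguments is identically zero, and then propagate the vanishing to all of $\Omega$ by connectedness. If anything, your write-up is more careful than the paper's, which writes the local expansion as $\sum_{n}\sum_{k} a_{k,n}(z_k-x^0_k)^n$ (omitting mixed monomials) and invokes the identity principle without the explicit open-closed argument you give.
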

\begin{proof}
	This is a simple consequence of the fact that $f$ is complex-analytic. 
	For all $x^0\in \Omega\cap \mathbb R^N$, $f$ can be locally written as $f(z)=\sum_{n=0}^\infty \sum_{k=1}^N a_{k,n}(z_k-x^0_k)^n$, and its restriction to $\mathbb R$ being analytic implies that all the coefficients $a_{k,n}$ are zero. So $f\equiv 0$ in a neighborhood of $\Omega\subset \mathbb R$, implies that $f\equiv 0$ on $\Omega$ because $f$ is holomorphic and $\Omega$ is connected.
\end{proof}

\begin{lemma}
	\label{lemma: holo extension of tensor}
	A $(p,q)$-tensor on the Fuchsian locus $\mathrm {Fuch}(S)$ extends to at most one $\C$-bilinear holomorphic $(p,q)$-tensor on $\QF(S)$.
\end{lemma}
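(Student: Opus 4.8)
The plan is to reduce the uniqueness statement to the vanishing of a single holomorphic tensor and then invoke Lemma \ref{lemma: extension of zero is zero} componentwise. Suppose $T_1$ and $T_2$ are two $\C$-multilinear holomorphic $(p,q)$-tensors on $\QF(S)$ whose restrictions to $\mathrm{Fuch}(S)$ both agree with the prescribed tensor on the Fuchsian locus, and set $T=T_1-T_2$. Then $T$ is a holomorphic $(p,q)$-tensor on $\QF(S)$ which vanishes whenever all of its arguments are tangent (resp. cotangent) to the Fuchsian locus, and it suffices to prove that such a $T$ is identically zero.

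First I would transport the problem to the global model $\Lambda\colon \QF(S)\to \Omega\subset \C^{6\mathrm g-6}$ of Equation \eqref{eq: holo map cocycles}, under which the Fuchsian locus corresponds to $\Omega\cap \R^{6\mathrm g-6}$. Writing $(z_1,\dots,z_N)$ with $N=6\mathrm g-6$ for the standard coordinates, the coordinate components $T^J_I$ of $T$ relative to the frames $\partial_{z_k}$ and $dz_k$ are holomorphic functions on $\Omega$. The key geometric point is that $\mathrm{Fuch}(S)\cong\Omega\cap\R^N$ is a totally real submanifold of maximal dimension: at a real point $x\in\Omega\cap\R^N$ the real directions $\partial_{x_k}$ tracing out $\Omega\cap\R^N$ correspond, under the identification $T^{\R}_x\cong T^{1,0}_x$, exactly to the holomorphic coordinate vectors $\partial_{z_k}$ (and similarly for the real cotangent directions). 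Hence evaluating $T$ on these real (co)tangent vectors recovers precisely the restrictions $T^J_I|_{\Omega\cap\R^N}$, which are zero because $T$ vanishes on the Fuchsian locus.

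I would then apply Lemma \ref{lemma: extension of zero is zero} to each holomorphic component $T^J_I\colon\Omega\to\C$, using that $\Omega$ is connected since $\QF(S)\cong\TSTS$ is connected. This forces every $T^J_I$ to vanish identically on $\Omega$, hence $T\equiv 0$ and $T_1=T_2$, which establishes the asserted uniqueness.

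The step requiring the most care, and the main obstacle, is the totally real argument of the second paragraph: one must verify that the real tangent (and cotangent) directions along which the Fuchsian locus is swept out span the full holomorphic tangent (resp. cotangent) space over $\C$, so that agreement of two $\C$-multilinear holomorphic tensors on these real vectors genuinely forces agreement of all their coordinate components. Once this compatibility between the totally real embedding of $\mathrm{Fuch}(S)$ and the standard real structure on $\Omega\cap\R^N$ is pinned down, the conclusion follows purely from the identity principle encapsulated in Lemma \ref{lemma: extension of zero is zero}.
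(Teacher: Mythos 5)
Your proposal is correct and follows essentially the same route as the paper: take the difference of two extensions, push it forward through $\Lambda$ to $\Omega\subset\C^{6\mathrm g-6}$, observe that its coordinate components are holomorphic functions vanishing on $\Omega\cap\R^{6\mathrm g-6}$ (precisely because the Fuchsian locus sits as a maximal totally real slice whose real directions $\partial_{x_k}$ identify with the holomorphic frames $\partial_{z_k}$), and conclude componentwise via Lemma \ref{lemma: extension of zero is zero}. The only cosmetic difference is that the paper extracts these scalar components by pairing with the standard holomorphic Riemannian metric $\inners_{\C^n}$ on the tensor bundle and then invokes its non-degeneracy, whereas you read off the components directly in the coordinate frames $\partial_{z_k}$, $dz_k$ — an equivalent bookkeeping device.
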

\begin{proof}
	Denote by $\inners_{\C^n}$ the standard holomorphic Riemannian metric on $\C^n$ defined by $\inner{v,w}_{\C^n}=\sum_{k=1}^n v_k w_k$. This metric induces a nondegenerate holomorphic symmetric bilinear form (that we will still denote by $\inners_{\C^n}$) on the tensor bundle $\otimes^{p} T\C^n$ defined by $\inner{ v^1\otimes\dots \otimes v^p, w^1\otimes \dots \otimes w^p}_{\C^n}=\inner{v^1,w^1}_{\C^n}\cdot\ \dots\ \cdot\inner{v^p,w^p}_{\C^n}$.
	
		Assume there exist two holomorphic extensions of a $(p,q)$-tensor on $\mathrm {Fuch}(S)$. By pushing them forward through $\Lambda$, we get two holomorphic $(p,q)$-tensors on $\Omega \subset \mathbb C^{6\mathrm g -6}$ which coincide on $\Omega\cap \mathbb R^{6\mathrm g -6}\ne \emptyset$. Denote by $\alpha$ the difference between them, hence $\alpha\equiv 0$ on $\Omega\cap\mathbb R^{6\mathrm g -6}$. Then, for all $j,k=1,\dots, 6\mathrm g -6$, the vector fields $\partial_{x_j}$ and $\partial_{x_k}$ are holomorphic on $\Omega$, and $\inner{\alpha(\partial_{x_{n_1}},\dots, \partial_{x_{n_q}}), \partial_{x_{m_1}}\otimes\dots\otimes \partial_{x_{m_p}}} \equiv 0$ on $\Omega\cap \mathbb R^{6\mathrm g -6}$. By Lemma \ref{lemma: extension of zero is zero}, the holomorphic function  $\inner{\alpha(\partial_{x_{n_1}},\dots, \partial_{x_{n_q}}), \partial_{x_{m_1}}\otimes\dots\otimes \partial_{x_{m_p}}}$ is everywhere zero on $\Omega$ for any choice of the indeces, hence $\alpha\equiv 0$ on $\Omega$ since $\{\partial_{x_j}\}_j$ is a $\mathbb C$-linear basis and $\inners_{\C}$ is non-degenerate.
\end{proof}

\begin{cor}
	\label{cor: the metric is unique}
	There exists a unique holomorphic Riemannian metric on $\QF(S)$ which extends the Weil-Petersson metric on the Fuchsian locus. Hence $\inners$ coincides with the holomorphic Riemannian metric defined in \cite{LoustauSanders}.
\end{cor}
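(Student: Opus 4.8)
The plan is to assemble the two structural facts already in hand — that $\inners$ is a holomorphic Riemannian metric restricting to Weil-Petersson on the Fuchsian locus, and the rigidity of holomorphic tensor extensions encoded in Lemma \ref{lemma: holo extension of tensor} — so as to obtain existence and uniqueness simultaneously, and then to invoke the uniqueness to identify $\inners$ with the Loustau--Sanders metric.

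First I would dispatch existence. By Theorem \ref{thm: holomorphic Riemannian} the bilinear form $\inners$ is a genuine holomorphic Riemannian metric on $\QF(S)$, and by part (5) of Proposition \ref{prop: generalita su metrica Riemanniana holo} its restriction to $\mathrm{Fuch}(S)$ is real-valued and equals the Weil-Petersson metric. Thus $\inners$ is already an extension of the required type, and nothing more is needed for existence.

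For uniqueness, the key observation is that a holomorphic Riemannian metric is precisely a non-degenerate symmetric $\C$-bilinear holomorphic $(0,2)$-tensor in the sense of Lemma \ref{lemma: holo extension of tensor}, namely one taking two vector-field inputs and returning a scalar. Hence any holomorphic Riemannian metric on $\QF(S)$ extending Weil-Petersson is, forgetting non-degeneracy, a holomorphic $(0,2)$-tensor extending the Weil-Petersson tensor on the Fuchsian locus; Lemma \ref{lemma: holo extension of tensor} says such an extension is unique, so $\inners$ is the only holomorphic Riemannian metric extending Weil-Petersson. Concretely, given two such extensions one applies the lemma to their difference, a holomorphic $(0,2)$-tensor vanishing along $\mathrm{Fuch}(S)$, to conclude it vanishes identically. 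Finally, since Loustau and Sanders construct in \cite{LoustauSanders} a holomorphic Riemannian metric on $\QF(S)$ whose restriction to the Fuchsian locus is Weil-Petersson, this uniqueness identifies it with $\inners$.

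The corollary is thus a bookkeeping assembly of earlier results, and I do not expect a genuine obstacle here. The one conceptual point, already absorbed into Lemma \ref{lemma: holo extension of tensor} via the complex-analytic continuation Lemma \ref{lemma: extension of zero is zero}, is that $\mathrm{Fuch}(S)$ is a totally real submanifold of half the real dimension of $\QF(S)$, so that the complexification of its real tangent bundle is the full holomorphic tangent bundle; this is exactly what guarantees that a $\C$-bilinear form is pinned down by its values along the Fuchsian locus, reducing everything to the standard fact that a holomorphic function vanishing on a maximal totally real set vanishes identically.
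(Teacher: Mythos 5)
Your proposal is correct and follows essentially the same route as the paper: existence is Theorem \ref{thm: holomorphic Riemannian} together with Proposition \ref{prop: generalita su metrica Riemanniana holo}(5), uniqueness is Lemma \ref{lemma: holo extension of tensor} applied to a holomorphic Riemannian metric viewed as a holomorphic $(0,2)$-tensor (the paper states the corollary without further proof precisely because this assembly is immediate), and the identification with the Loustau--Sanders metric then follows from uniqueness. Your closing remark on $\mathrm{Fuch}(S)$ being maximal totally real is exactly the mechanism the paper encodes via the coordinates $\Lambda$ and Lemma \ref{lemma: extension of zero is zero}.
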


\begin{cor}
	\label{cor: mapping class group invariance}
	The holomorphic Riemannian metric $\inners$ on $\QF(S)$ is invariant under the diagonal action of the mapping class group on $\QF(S)\cong \TSTS$.
\end{cor}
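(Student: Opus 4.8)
The plan is to deduce the invariance from the uniqueness of the holomorphic extension (Lemma \ref{lemma: holo extension of tensor} and Corollary \ref{cor: the metric is unique}), combined with the already-known fact that the Weil-Petersson metric is mapping-class-group invariant. Fix $\phi\in MCG(S)$. Acting diagonally on $\TSTS$ by $([c_1],[\overline{c_2}])\mapsto([\phi\cdot c_1],[\phi\cdot\overline{c_2}])$ and transporting through the Bers identification $\mathfrak B$, the element $\phi$ induces a self-map of $\QF(S)$ which is a biholomorphism, since $MCG(S)$ acts on $\mathcal T(S)$ (and on $\mathcal T(\overline S)$) by biholomorphisms, as recalled at the end of Section \ref{introsection QF}. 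Consequently the pullback $\phi^*\inners$ is again a $\C$-bilinear holomorphic symmetric $2$-tensor on $\QF(S)$, and it is non-degenerate because $d\phi$ is a fibrewise isomorphism; that is, $\phi^*\inners$ is a holomorphic Riemannian metric on $\QF(S)$.

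First I would observe that the diagonal action of $\phi$ preserves the Fuchsian locus $\mathrm{Fuch}(S)$, which under $\mathfrak B$ corresponds to the diagonal $\{([c],[\overline c])\}\subset\TSTS$: if $c_1=c_2=c$, then $\phi\cdot c_1=\phi\cdot c_2$, so the image stays on the diagonal. Hence $\phi$ restricts to a biholomorphism of $\mathrm{Fuch}(S)\cong\mathcal T(S)$, which is precisely the standard action of $\phi$ on Teichm\"uller space. Restricting $\phi^*\inners$ to the real tangent spaces of $\mathrm{Fuch}(S)$ and invoking the last item of Proposition \ref{prop: generalita su metrica Riemanniana holo}, which identifies $\inners$ on the Fuchsian locus with the Weil-Petersson metric, I would get $\phi^*\inners|_{\mathrm{Fuch}(S)}=\phi^*(\inners_{WP})$. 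Since the Weil-Petersson metric is $MCG(S)$-invariant (Section \ref{introsection: WP metric}), the right-hand side equals $\inners_{WP}=\inners|_{\mathrm{Fuch}(S)}$.

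Thus $\inners$ and $\phi^*\inners$ are two holomorphic Riemannian metrics on $\QF(S)$ whose restrictions to the Fuchsian locus agree. By Lemma \ref{lemma: holo extension of tensor}, applied to the symmetric $2$-tensor, a tensor on $\mathrm{Fuch}(S)$ admits at most one holomorphic extension to $\QF(S)$; since both $\inners$ and $\phi^*\inners$ are holomorphic extensions of $\inners_{WP}$, they coincide on all of $\QF(S)$. As $\phi$ was arbitrary, $\inners$ is invariant under the diagonal action of the mapping class group, as claimed. The only point requiring care is that the pullback computation on $\mathrm{Fuch}(S)$ genuinely reduces to the Teichm\"uller action, i.e. that $d\phi$ maps the real tangent space of $\mathrm{Fuch}(S)$ into itself; this is immediate once one knows $\phi$ preserves the locus, so there is no substantive obstacle beyond this bookkeeping.
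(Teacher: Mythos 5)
Your proposal is correct and follows essentially the same route as the paper's own proof: the diagonal action is by biholomorphisms, it preserves the Fuchsian locus where $\inners$ restricts to the Weil--Petersson metric, and the uniqueness of the holomorphic extension (Lemma \ref{lemma: holo extension of tensor}, Corollary \ref{cor: the metric is unique}) forces $\phi^*\inners = \inners$. The paper states this in three lines; your version merely makes explicit the bookkeeping (holomorphy and non-degeneracy of $\phi^*\inners$, preservation of the diagonal) that the paper leaves implicit.
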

\begin{proof}
	The mapping class group acts on $\mathcal T(S)$ preserving the Weil-Petersson metric and the complex structure. As a consequence, the diagonal action on $\TSTS$  is by biholomorphisms as well and preserves the Weil-Petersson metric on the Fuchsian locus: the action must therefore be by isometries for the unique holomorphic extension of the Weil-Petersson metric to $\QF(S)$.
\end{proof}

\subsection{The Goldman symplectic form}

The relation between the Goldman symplectic form on the character variety of $\PSL(2,\R)$ and the Weil-Petersson metric (see Section \ref{introsection: WP metric}) allows to give an integral description of the analog form for $\PSL(2,\C)$ on the quasi-Fuchsian locus through its relation with the holomorphic Riemannian metric, as stated in the following Proposition.

\begin{prop}
	\label{prop: Goldman symp}
The $\PSL(2,\C)$-Goldman symplectic form $\omega_G$ vanishes on the Bers slices of $\QF(S)$, and in each point $([c_1], [\overline{c_2}])$ it is such that 
\[
\omega_G([q_1], [\overline{q_2}])= 8i \inner{[q_1], [\overline{q_2}]}=2i \int \frac{\phee \cdot \overline{\psi} }\varrho dz\wedge d\overline w\ 
\]
where $z$ and $\overline w$ are local complex coordinates for $c_1$ and $\overline{c_2}$, $q_1=\phee dz^2$, $\overline{q_2}=\overline \psi d\overline w ^2$, $g=\varrho dzd\overline w$.
\end{prop}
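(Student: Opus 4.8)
The plan is to identify $\omega_G$ with a holomorphic $2$-form built directly from $\inners$, and then to conclude by uniqueness of holomorphic extensions from the Fuchsian locus. Let $P_+$ and $P_-$ be the projections of the holomorphic tangent bundle onto the two summands of the metric model $T_{([c_1],[\overline{c_2}])}\QF(S)=\HQD(c_1)\oplus\HQD(\overline{c_2})$ from Theorem \ref{thm: schwartzian}; these are holomorphic bundle maps, being the projections onto the tangent spaces of the two holomorphic Bers foliations. I define the antisymmetric $\C$-bilinear form
\[
\Theta(X,Y):=8i\big(\inner{P_+X,\,P_-Y}_g-\inner{P_+Y,\,P_-X}_g\big),
\]
which is holomorphic by Theorem \ref{thm: holomorphic Riemannian}. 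By construction $\Theta$ vanishes on any pair of vectors tangent to a single Bers slice, since $\inners_g$ vanishes on $\HQD(c_1)\times\HQD(c_1)$ and on $\HQD(\overline{c_2})\times\HQD(\overline{c_2})$ by Proposition \ref{prop: generalita su metrica Riemanniana holo} (1)--(2); and on a mixed pair $q_1\in\HQD(c_1)$, $\overline{q_2}\in\HQD(\overline{c_2})$ it reduces to $\Theta(q_1,\overline{q_2})=8i\inner{q_1,\overline{q_2}}_g$, which is the claimed integral by Proposition \ref{prop: generalita su metrica Riemanniana holo} (3). Thus the whole statement reduces to the single identity $\omega_G=\Theta$.

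Both $\omega_G$ and $\Theta$ are holomorphic $\C$-bilinear tensors on $\QF(S)$: for $\omega_G$ this is the general property of the Goldman form of the complex group $\PSL(2,\C)$ recalled in Section \ref{introsection: WP metric}, and for $\Theta$ it was just established. Hence, by the uniqueness statement of Lemma \ref{lemma: holo extension of tensor}, it is enough to verify that $\omega_G$ and $\Theta$ agree after restriction to the Fuchsian locus $\mathrm{Fuch}(S)$.

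On $\mathrm{Fuch}(S)$ the restriction of the $\PSL(2,\C)$-Goldman form coincides with the $\PSL(2,\R)$-Goldman form, which is $8$ times the Weil--Petersson symplectic form $\omega_{WP}$ (Section \ref{introsection: WP metric}). I would evaluate both sides on real tangent vectors: by the differential of the Fuchsian map (Remark \ref{rmk: differential of the fuchsian map}), the harmonic classes $[\overline{q_1}/g_0]$ and $[\overline{q_2}/g_0]$ correspond to $q_1+\overline{q_1}$ and $q_2+\overline{q_2}$ in the metric model, where $g_0=g(c,\overline c)$ is the hyperbolic metric. Expanding $\Theta(q_1+\overline{q_1},\,q_2+\overline{q_2})$ by bilinearity, only the mixed terms survive and Proposition \ref{prop: generalita su metrica Riemanniana holo} (3) turns them into $8i\cdot\frac{1}{4}i\int_S\frac{\phee_1\overline{\phee_2}-\phee_2\overline{\phee_1}}{\rho_0}\,dz\wedge d\overline z$; rewriting $\phee_1\overline{\phee_2}-\phee_2\overline{\phee_1}=2i\,\mathrm{Im}(\phee_1\overline{\phee_2})$ and comparing with the integral expression for $\omega_{WP}$ recalled in Section \ref{introsection: WP metric} identifies this, up to the orientation conventions discussed below, with $8\omega_{WP}$, hence with $\omega_G$, on $\mathrm{Fuch}(S)$.

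The main obstacle is exactly this last comparison, which is a careful bookkeeping of numerical constants and of factors of $i$: one must keep track of the action of the complex structure $J$ (through $\omega_{WP}(X,Y)=\inner{JX,Y}_{WP}$), of the reversed orientation carried by $\mathcal T(\overline S)$ in the $P_-$ summand, and of the normalization $\frac{i}{2}\,dz\wedge d\overline z$ of the Riemannian area form. Once these conventions are fixed coherently, $\omega_G=\Theta$ holds on the Fuchsian locus, and Lemma \ref{lemma: holo extension of tensor} propagates the equality to all of $\QF(S)$; this yields at once that the Bers slices are Lagrangian for $\omega_G$ and the stated integral formula for $\omega_G([q_1],[\overline{q_2}])$.
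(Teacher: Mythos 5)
Your proposal is correct and takes essentially the same route as the paper's own proof: both construct the identical candidate holomorphic $2$-form from $\inners$ and the two isotropic Bers foliations (your $\Theta$ is exactly the extension the paper writes down via $\omega_G(X_1,Y_1)=8i\inner{X_1,Y_1}$, $\omega_G(X_1,X_2)=\omega_G(Y_1,Y_2)=0$), verify agreement with $8\omega_{WP}$ along the Fuchsian locus using Remark \ref{rmk: differential of the fuchsian map} and Proposition \ref{prop: generalita su metrica Riemanniana holo}, and then propagate the identity to all of $\QF(S)$ by the uniqueness Lemma \ref{lemma: holo extension of tensor}. The only, immaterial, difference is the direction of the polarization at the Fuchsian locus --- the paper extends $\omega_G$ from the real tangent vectors $q+\overline q$ to holomorphic ones by $\C$-bilinearity, whereas you restrict $\Theta$ from holomorphic to real vectors and compare with the Weil--Petersson integral --- which is the same bookkeeping of factors of $i$ carried out in the opposite direction.
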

\begin{proof}
We underline that, while the integral description is new, the fact that Bers slices are Lagrangian for $\omega_G$ is well known, and also the relation between $\omega_G$ and $\inners$ has been previously discussed in \cite{LoustauSanders}. 
Nevertheless, we give a full new proof of the whole statement.

Using $T_{([c],[\overline c])} \mathrm{Fuch}(S)=\{q+\overline q\ |\ q\in \HQD(c)\}$ (as seen in Remark \ref{rmk: differential of the fuchsian map}) and the desciption of the Goldman symplectic form with respect to the Weil-Petersson metric (see Section \ref{introsection: WP metric} ), we have 
\[
\omega_G(q+\overline q, q'+\overline{q}') = 8\inner{iq+ \overline{iq}, q'+\overline q ' }= 8i\inner{q-\overline q, q'+ \overline q'} \ 
\]
for all $q\in \HQD(c)$.

Through this and the fact that $\omega_G$ is $\C$-bilinear, we get a full description of $\omega_G$ on the holomorphic tangent bundle to $\QF(S)$ along the Fuchsian locus: indeed, for all $q,q'\in \HQD(c)$,
\begin{align*}
	4\omega_G(q, q')=& \omega_G(q+  \overline q - i({iq}+ \overline{iq}), q'+  \overline q' - i({iq}'+ \overline{iq'}))=\\
	=& \omega_G(q+\overline q, q'+\overline q') -i\omega_G(q+\overline q, {iq}'+\overline{iq'}) -\\
	&- i\omega_G(iq+ \overline{iq}, q'+\overline q ') - \omega_G(iq+\overline{iq}, iq'+\overline{iq'})=\\
	=&8\inner{iq+\overline{iq}, q'+\overline q'}-8i\inner{iq+\overline{iq}, iq'+\overline{iq'}}-\\
		&-8i\inner{-q-\overline q, q'+\overline q'} -\inner{-q-\overline q, iq'+\overline {iq' }}= 32i \inner{q, q'}=0
\end{align*}
and in a similar fashion one gets that 
\begin{align*}
	4\omega_G(q, \overline q')=&  \omega_G(q+  \overline q - i(\overline{iq}+ \overline{iq}), q'+  \overline q' + i(\overline{iq'}+ \overline{iq'}))\\
	=&32i\inner{q, \overline q'} \ .
\end{align*}
 By Lemma \ref{lemma: holo extension of tensor}, $\omega_G$ is the unique holomorphic extension of its restriction on the Fuchsian locus. Since $\inners$ is holomorphic, such an extension can be obtained as follows: for all $X_1, X_2$ holomorphic vector fields on $\QF(S)$ tangent to the foliation of right Bers slices $\{\QF(\bullet, [\overline{c_2}])\}_{[\overline{c_2}]\in \mathcal T(\overline S)}$ and $Y_1, Y_2$ holomorphic vector fields on $\QF(S)$ tangent to the foliation of left Bers slices $\{\QF([c_1], \bullet)\}_{[c_1]\in \mathcal T(S)} $, one defines $\omega_G(X_1, Y_1)= 8i \inner{X_1,Y_1}$, $\omega_G(Y_1, X_1)=-8i \inner{X_1, Y_1}$, $\omega_G(X_1, X_2)=\omega_G(Y_1, Y_2)=0$. We therefore get the statement, with the integral description following from Proposition \ref{prop: generalita su metrica Riemanniana holo}.
\end{proof}

\subsection{A formula for the curvature}

We conclude this section with a few computations on the metric $\inners$.

In order to make computations, we will use the same notation as in Section \ref{section: the bilinear form is holo Riem}.

Fix $\ccpair\in \CSCS$. Fix a $\mathbb C$-linear basis $\{q_1^1, \dots q_1^{6\mathrm g -6}\}$ for $\HQD(c_1)$ and $\{\overline{q_2}^1, \dots \overline{q_2}^{6\mathrm g -6}\}$ for $\HQD(\overline{c_2})$. 
and denote 
\[
L_k= L_{q_1^k}\qquad\qquad\qquad R_{j}=R_{\overline{q_2}^j}\ .
\]

We will also use extensively the \emph{Koszul formula}:
\begin{equation}
	\label{eq: Koszul formula}
	2\inner{\nabla_X Y, Z}= \partial_X(\inner{Y, Z})+\partial_Y (\inner{X,Z}) - \partial_Z(\inner{X,Y})-\inner{[Y,X], Z}-\inner{[X,Z],Y} -\inner{[Y,Z],X}
\end{equation}
where $X,Y,Z$ are vector fields on $\QF(S)$ and $\nabla$ is the Levi-Civita connection of $\inners$.

\begin{lemma}
	\label{lemma: technical nablas}
	\begin{itemize}[noitemsep,topsep=0pt] 
		\item  $\nabla_{L_i} L_k=0$ on $\QF([c_1], \bullet)$ for all $i,k=1,\dots, 6\mathrm g-6$.
		\item $\nabla_{R_i} R_j=0$ on $\QF(\bullet, [\overline{c_2}])$ for all $i,j=1, \dots, 6\mathrm g-6$.
		\item The vector field $\nabla_{L_k}R_j= \nabla_{R_j}L_k$ is tangent to the Bers slices $\QF([c_1], \bullet)$ and $\QF(\bullet, [\overline{c_2}])$ and it is zero in $([c_1],[\overline{c_2}])$.
	\end{itemize}
\end{lemma}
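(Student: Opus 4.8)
The plan is to read off every covariant derivative directly from the Koszul formula \eqref{eq: Koszul formula}, exploiting the structural properties of the frame $\{L_k,R_j\}$ recorded in Remark \ref{rmk: good frame} and Lemma \ref{lemma: inners tra Lk e Rj costante sulla slice}. Three ingredients would be used repeatedly: (i) all the Lie brackets $[L_i,L_k]$, $[R_i,R_j]$, $[L_k,R_j]$ vanish identically, so every bracket term in \eqref{eq: Koszul formula} drops out; (ii) the functions $\inner{L_i,L_k}$ and $\inner{R_i,R_j}$ vanish identically on $\QF(S)$ because Bers slices are isotropic (Proposition \ref{prop: generalita su metrica Riemanniana holo}); and (iii) the cross terms $\inner{L_k,R_j}$ are constant along each slice $\QF([c_1],\bullet)$ and $\QF(\bullet,[\overline{c_2}])$ by Lemma \ref{lemma: inners tra Lk e Rj costante sulla slice}. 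I would also record the linear-algebra fact that, since $\inners$ is non-degenerate (Theorem \ref{thm: holomorphic Riemannian}) and each Bers-slice tangent space is an isotropic subspace of half the complex dimension of $T\QF(S)$, these tangent spaces are Lagrangian; hence a tangent vector lies in $T\QF([c_1],\bullet)=\mathrm{span}\{L_m\}$ (the $L_m$ being a frame of the left slice by Theorem \ref{thm: schwartzian}) if and only if it is $\inners$-orthogonal to every $L_m$, and symmetrically for the right slices.

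For the first two bullets I would apply \eqref{eq: Koszul formula} with $X=L_i$, $Y=L_k$ (resp.\ the $R$'s). Pairing $\nabla_{L_i}L_k$ against $L_m$, every surviving term is a directional derivative of some $\inner{L_\bullet,L_\bullet}\equiv0$, so $\inner{\nabla_{L_i}L_k,L_m}\equiv0$ on all of $\QF(S)$. Pairing against $R_m$, the bracket terms and $\partial_{R_m}\inner{L_i,L_k}$ vanish, leaving only $\partial_{L_i}\inner{L_k,R_m}$ and $\partial_{L_k}\inner{L_i,R_m}$; but $L_i$ and $L_k$ are tangent to $\QF([c_1],\bullet)$, along which $\inner{L_\bullet,R_\bullet}$ is constant, so both derivatives vanish on that slice. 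Thus $\inner{\nabla_{L_i}L_k,\cdot}$ kills the full frame on $\QF([c_1],\bullet)$, and non-degeneracy gives $\nabla_{L_i}L_k=0$ there. The identity $\nabla_{R_i}R_j=0$ on $\QF(\bullet,[\overline{c_2}])$ follows by the symmetric computation (interchanging the roles of the left and right frames).

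For the third bullet I would first note that $\nabla_{L_k}R_j-\nabla_{R_j}L_k=[L_k,R_j]=0$ by torsion-freeness and Remark \ref{rmk: good frame}, so the two covariant derivatives agree; call the common field $W$. To see $W$ is tangent to $\QF(\bullet,[\overline{c_2}])$ along that slice, I would compute $\inner{W,R_m}$ from \eqref{eq: Koszul formula} with $X=L_k$, $Y=R_j$, $Z=R_m$: the term $\partial_{L_k}\inner{R_j,R_m}$ vanishes since $\inner{R_\bullet,R_\bullet}\equiv0$, while $\partial_{R_j}\inner{L_k,R_m}$ and $\partial_{R_m}\inner{L_k,R_j}$ vanish on $\QF(\bullet,[\overline{c_2}])$ because $R_j,R_m$ are tangent to it and $\inner{L_\bullet,R_\bullet}$ is constant there. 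Hence $W\perp R_m$ for all $m$, so by the Lagrangian characterization $W$ is tangent to $\QF(\bullet,[\overline{c_2}])$. The symmetric computation of $\inner{W,L_m}$ on $\QF([c_1],\bullet)$ (using $\inner{L_\bullet,L_\bullet}\equiv0$ and constancy of $\inner{L_\bullet,R_\bullet}$ along the left slice) shows $W$ is tangent to $\QF([c_1],\bullet)$ there. Finally, at the base point $([c_1],[\overline{c_2}])$, where both slices meet, $W$ is simultaneously tangent to both, hence lies in $\HQD(c_1)\cap\HQD(\overline{c_2})=\{0\}$ inside the metric model \eqref{eq: tangent model}, so $W$ vanishes there.

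The only genuinely delicate point, and the step I would treat most carefully, is the bookkeeping of where each vanishing holds: the identities $\inner{\nabla_{L_i}L_k,L_m}\equiv0$ and (after the first term) $\inner{W,R_m}$ vanish on a full neighbourhood only through the global vanishing of $\inner{L,L}$ and $\inner{R,R}$, whereas the remaining orthogonalities hold merely on the relevant slice, since they rely on the slice-constancy of the cross terms $\inner{L_k,R_j}$ and on the $L$'s (resp.\ $R$'s) being tangent to that slice. Keeping track that tangency of $W$ to the right slice is asserted only along $\QF(\bullet,[\overline{c_2}])$, and to the left slice only along $\QF([c_1],\bullet)$, is exactly what forces the vanishing of $W$ to be claimed only at the single common point, not on a whole slice.
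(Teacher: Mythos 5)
Your proposal is correct and follows essentially the same route as the paper: both arguments run everything through the Koszul formula \eqref{eq: Koszul formula}, using the vanishing brackets and isotropy of the slices from Remark \ref{rmk: good frame}, the slice-constancy of $\inner{L_k,R_j}$ from Lemma \ref{lemma: inners tra Lk e Rj costante sulla slice}, and non-degeneracy to conclude. The only difference is expository: you spell out the Lagrangian characterization of the slice tangent spaces and the frame-by-frame pairings that the paper dismisses as ``straightforward,'' which is a faithful filling-in of the same argument rather than a new one.
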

\begin{proof}
	All the proofs follow from straightforwardly from the Koszul formula \eqref{eq: Koszul formula}, by Remark \ref{lem: good frame}, and by Lemma \ref{lemma: inners tra Lk e Rj costante sulla slice}, implying that every two vector fields among $L_1, \dots, L_{6\mathrm g-6}$ and $R_1, \dots, R_{6\mathrm g -6}$ commute and are such that their inner product is constant on  $\QF([c_1], \bullet) \cup \QF(\bullet, [\overline{c_2}])$, hence its derivative is zero over each of these two Bers slices. 

	We only prove explicity the last statement. By the Koszul formula in Equation \eqref{eq: Koszul formula} we get that along $\QF([c_1], \bullet)$ 
	\[
	\inner{\nabla_{L_k}R_j, L_i}= \partial_{L_k} \inner{ R_j, L_i} - \partial_{L_i} \inner{R_j, L_k} \equiv 0\ ,
	\]
	so $\nabla_{L_k}R_j$ is tangent to the Bers slice $\QF([c_1], \bullet)$. With the analog argument, one gets that the vector field $\nabla_{R_j}L_k= \nabla_{L_k}R_j$ is also tangent to $\QF(\bullet, [\overline{c_2}])$, so it must be zero in $\ccpairclass$.
\end{proof}

Let $g=g\ccpair$, let $U_g=\{q_1\in \HQD(c_1)\ |\ g+q_1\text{ is a Bers metric}\}$, and $V_g=\{\overline{q_2}\in \HQD(\overline{c_2})\ |\ g+\overline{q_2}\text{ is a Bers metric}\}$.	

Consider on $U_g$ and $V_g$ the trivial affine connection inherited from the fact that $\HQD(c_1)$ and $\HQD(\overline{c_2})$ are finite-dimensional vector spaces, linearly isomorphic to $\mathbb R^{6\mathrm g-6}$. 
\begin{prop}
	\label{prop: param are affine}
	The maps  
	\begin{equation*}
		\begin{split}
		u\colon	U_g &\to \QF([c_1], \bullet)\\
			q_1&\mapsto [g+q_1]
		\end{split} \qquad \qquad 
		\begin{split}
		v\colon	V_g&\to \QF(\bullet, [\overline{c_2}])\\
			\overline{q_2}&\mapsto [g+\overline{q_2}]
		\end{split}\ .
	\end{equation*}
	are affine diffeomorphisms onto their image if you endow $\QF(S)$ with the Levi-Civita connection $\nabla$ of $\inners$.
\end{prop}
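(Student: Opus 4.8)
The plan is to prove that each of $u$ and $v$ intertwines the trivial connection on its vector-space domain with the Levi-Civita connection $\nabla$ of $\inners$, which is precisely the condition of being affine; that both maps are diffeomorphisms onto open subsets of the respective Bers slices is already contained in Theorem \ref{thm: schwartzian}. I will treat $u$ in detail, the argument for $v$ being symmetric (with the fields $R_j$ in place of $L_k$ and $\QF(\bullet, [\overline{c_2}])$ in place of $\QF([c_1], \bullet)$).

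First I would record that $u(U_g)\subset \QF([c_1],\bullet)$, the base left Bers slice, and identify the push-forward of the coordinate frame. Keeping the notation $L_k=L_{q_1^k}$ of the present subsection, the constant vector fields $\partial_k$ on $U_g$ (coming from the fixed $\C$-linear basis of $\HQD(c_1)$) are parallel for the trivial connection, and I claim $u_*\partial_k=L_k$ along $u(U_g)$. Indeed, at a point $u(q_1^0)=[g+q_1^0]$ we may write $g+q_1^0=g(c_1,\overline{c_2}')$ for a suitable $\overline{c_2}'$, and then
\[
(u_*\partial_k)_{[g+q_1^0]}=\frac{d}{ds}\Big|_{s=0}\big[(g+q_1^0)+s\,q_1^k\big]=\frac{d}{ds}\Big|_{s=0}\big[g(c_1,\overline{c_2}')+s\,q_1^k\big],
\]
which is exactly the value at $[g+q_1^0]$ of the field generated by $q_1^k$ along $\QF([c_1],\bullet)$, that is, of $L_k$.

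Then I would check the intertwining on this frame. For $Y=\sum_j b^j\partial_j$ the trivial connection gives $\nabla^0_{\partial_i}Y=\sum_j(\partial_i b^j)\,\partial_j$, while, writing $\tilde b^j=b^j\circ u^{-1}$ so that $L_i\tilde b^j=(\partial_i b^j)\circ u^{-1}$,
\[
\nabla_{u_*\partial_i}\,u_*Y=\sum_j\big((L_i\tilde b^j)\,L_j+\tilde b^j\,\nabla_{L_i}L_j\big)=\sum_j(L_i\tilde b^j)\,L_j=u_*\big(\nabla^0_{\partial_i}Y\big),
\]
where the middle equality uses $\nabla_{L_i}L_j=0$ on $\QF([c_1],\bullet)$ from Lemma \ref{lemma: technical nablas}. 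As $i$ ranges over the frame this yields $u_*\nabla^0_XY=\nabla_{u_*X}u_*Y$ for all $X,Y$, so $u$ is affine; combined with the embedding property of Theorem \ref{thm: schwartzian}, $u$ is an affine diffeomorphism onto its image. The map $v$ is handled identically, invoking instead $\nabla_{R_i}R_j=0$ on $\QF(\bullet,[\overline{c_2}])$.

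I expect the only genuinely delicate point to be the identification $u_*\partial_k=L_k$: it relies on the base point of the construction of the fields $L_k$ in Section \ref{section: the bilinear form is holo Riem} being the same $\ccpair$ that defines $g$, so that the image of $u$ lands in the base slice over which $L_k$ is explicitly the deformation field generated by $q_1^k$. Once this bookkeeping is settled, the vanishing of $\nabla_{L_i}L_j$ and $\nabla_{R_i}R_j$ furnished by Lemma \ref{lemma: technical nablas} does all the remaining work, and the verification is purely formal.
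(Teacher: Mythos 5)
Your proof is correct and takes essentially the same approach as the paper: the paper's (very terse) proof likewise reduces the statement, via the identification of the pushed-forward constant frames with the fields $L_k$ and $R_j$, to the vanishing $\nabla_{L_i}L_j\equiv 0$ on $\QF([c_1],\bullet)$ and $\nabla_{R_i}R_j\equiv 0$ on $\QF(\bullet,[\overline{c_2}])$ supplied by Lemma \ref{lemma: technical nablas}. Your write-up simply makes explicit the bookkeeping that the paper compresses into the phrase ``by construction of the vector fields $R_j$ and $L_k$'', namely that $u_*\partial_k=L_k$ along the base slice (which, as you note, rests on the base point of the construction of $L_k$ being the pair $(c_1,\overline{c_2})$ defining $g$, with well-posedness coming from Proposition \ref{prop: g+q ben posto}).
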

\begin{proof}
	
	By construction of the vector fields $R_j$ and $L_k$, the statement is equivalent to proving that $\nabla_{R_k} R_j\equiv0$ on $\QF(\bullet, [\overline{c_2}])$ and $\nabla_{L_k} L_j\equiv 0$ on $\QF([c_1], \bullet)$, which follow from Lemma \ref{lemma: technical nablas}
\end{proof}
\begin{cor}
	With the notations above, the curves $t\mapsto [g+tq_1]$ are geodesics.
\end{cor}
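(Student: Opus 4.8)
The corollary is an immediate consequence of Proposition \ref{prop: param are affine}, so the plan is simply to spell out the elementary step that turns affineness into the geodesic property. Recall that the relevant connection $\nabla$ is the Levi-Civita connection of the holomorphic Riemannian metric $\inners$, and that all the curves $t\mapsto[g+tq_1]$ lie inside the single left Bers slice $\QF([c_1],\bullet)$. The starting point is the statement of Proposition \ref{prop: param are affine}: the map
\[
u\colon U_g\to \QF([c_1],\bullet),\qquad q_1\mapsto [g+q_1],
\]
is an affine diffeomorphism onto its image, where $U_g\subset\HQD(c_1)$ carries the trivial (flat) connection coming from its linear structure and $\QF(S)$ carries $\nabla$. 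Equivalently, $u$ intertwines the two connections, $u_*(\nabla^{\mathrm{triv}}_XY)=\nabla_{u_*X}(u_*Y)$; this is exactly the content of $\nabla_{L_i}L_k=0$ on $\QF([c_1],\bullet)$ from Lemma \ref{lemma: technical nablas}, since the $L_k$ are the images under $u$ of the constant coordinate vector fields on the vector space $\HQD(c_1)$.

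The second step is to identify the relevant curves downstairs. Fix $q_1\in\HQD(c_1)$ and consider in $U_g$ the segment $t\mapsto tq_1$, defined for $t$ in the open interval about $0$ for which $tq_1\in U_g$ (such an interval exists because $U_g$ is star-shaped about $0$). With respect to the trivial connection on $\HQD(c_1)$, this is a straight line: its velocity is the constant vector $q_1$, so its covariant acceleration vanishes identically, and it is therefore a geodesic of $\nabla^{\mathrm{triv}}$. Applying the intertwining property to the curve $\gamma(t)=tq_1$ gives $\nabla_{\dot{(u\circ\gamma)}}\dot{(u\circ\gamma)}=u_*(\nabla^{\mathrm{triv}}_{\dot\gamma}\dot\gamma)=0$, so $t\mapsto u(tq_1)=[g+tq_1]$ is a geodesic of $\nabla$, which is the assertion.

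There is no real obstacle here: all the substance was already established in Lemma \ref{lemma: technical nablas} and Proposition \ref{prop: param are affine}. The only point deserving a word of care is the parametrization — an affine map preserves not merely the trace of a geodesic but its affine parameter, which is precisely what lets us conclude that the specific parametrization $t\mapsto[g+tq_1]$, and not just its image, is geodesic. (The analogous statement for the right Bers slices, $t\mapsto[g+t\overline{q_2}]$, follows identically from the second half of Proposition \ref{prop: param are affine}.)
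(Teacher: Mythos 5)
Your proof is correct and is precisely the argument the paper intends: the corollary is stated without proof because it follows immediately from Proposition \ref{prop: param are affine}, exactly as you spell out — the affine diffeomorphism $u$ carries the straight lines $t\mapsto tq_1$ of the flat connection on $U_g$ (which lie in $U_g$ for an interval of $t$ by star-shapedness) to $\nabla$-geodesics, with the affine parameter preserved. Your explicit reduction to $\nabla_{L_i}L_k=0$ from Lemma \ref{lemma: technical nablas} matches the paper's own proof of that proposition, so there is no divergence in approach.
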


We can now give some explicit computation for the curvature tensor of $\inners$.

We keep using the notation above. Fix $\ccpairclass\in \TSTS$, and consider the global vector fields $R_1, \dots R_{6\mathrm g-6}, L_1, \dots L_{6\mathrm g-6}$.

In the following, $\mathrm R$ denotes the curvature tensor of $\inners$, namely $\RR(X,Y,X,Y)=\inner{\nabla_X\nabla_Y X - \nabla_Y \nabla_X X- \nabla_{[X,Y]} X,Y}$.
\begin{lemma}
	On the whole $\QF(S)$, for each $j,k=1,\dots, 6\mathrm g -6$,
	\[
	\RR(L_k, R_j, L_k, R_j)= -\partial_{L_k}\partial_{R_j} \inner{L_k, R_j} + \inner{\nabla_{L_k} L_k, \nabla_{R_j}R_j} - \inner{\nabla_{L_k}R_j, \nabla_{L_k}R_j}\ .
	\]
	In particular, on $\QF([c_1], \bullet)\cup \QF(\bullet, [\overline{c_2}])$, 
	\[	\RR(L_k, R_j, L_k, R_j)= -\partial_{L_k}\partial_{R_j} \inner{L_k, R_j} \ .
	\]
\end{lemma}
\begin{proof}
	In the following, we denote $X=L_k$ and $Y=R_j$
	\begin{align*}
		\RR(X,Y,X,Y)&=\inner{\nabla_X \nabla_Y X, Y} -\inner{\nabla_Y\nabla_X X, Y} =\\
		&=\partial_X(\inner{\nabla_YX, Y})- \inner{\nabla_Y X, \nabla_XY} -\partial_Y\inner{\nabla_X X, Y}+\inner{\nabla_X X, \nabla_Y Y}=\\
		&=\partial_X(\inner{\nabla_YX, Y})- \inner{\nabla_Y X, \nabla_XY} -\partial_Y\partial_X\inner{X, Y}+ \partial_Y \inner{X, \nabla_XY}+\inner{\nabla_X X, \nabla_Y Y}.
	\end{align*}
	Now, observe that since $\inner{X,X}=\inner{Y,Y}=0$, the Koszul formula shows immediately that $\inner{\nabla_YX, Y}=0= \inner{X, \nabla_XY}$. As a result, we have proved the first equality. 
	
	To prove the second equality, by Lemma \ref{lemma: technical nablas} we have $\nabla_{L_k} L_k\equiv 0$ on $\QF([c_1], \bullet)$ and $\nabla_{R_j} R_j\equiv 0$ on $\QF(\bullet, [\overline{c_2}])$, so  $\inner{\nabla_{L_k} L_k, \nabla_{R_j}R_j}=0$ on $\QF([c_1], \bullet)\cup \QF(\bullet, [\overline{c_2}])$. Finally, by the last statement of Lemma \ref{lemma: technical nablas}, $\nabla_{L_k}R_j$ is tangent to $\QF([c_1], \bullet)$ and $\QF(\bullet, [\overline{c_2}])$, so it is isotropic.
\end{proof}

\begin{prop}
	The sectional curvature $\mathrm K(L_k, R_j)$, defined when $\inner{L_k,R_j}\ne 0$, in  $\QF([c_1], \bullet)\cup \QF(\bullet, [\overline{c_2}])$ is given by
	\[
	\mathrm K (L_k, R_j)= - \frac{\partial_{L_k}\partial_{R_j} \inner{L_k, R_j} }{\inner{L_k, R_j} ^2}= \partial_{L_k}\partial_{R_j} \log(\inner{L_k, R_j})
	\]
	
\end{prop}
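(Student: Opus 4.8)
The plan is to read this Proposition off the preceding Lemma (which carries all the analytic content) together with the definition of sectional curvature recalled in Section~\ref{introsection: holo Riemannian metrics} and the isotropy of the Bers slices. Writing $X=L_k$, $Y=R_j$ and abbreviating $F=\inner{L_k,R_j}$, I would start from
\[
\mathrm K(L_k,R_j)=\frac{\RR(L_k,R_j,R_j,L_k)}{\inner{L_k,L_k}\inner{R_j,R_j}-\inner{L_k,R_j}^2}.
\]
The denominator simplifies at once: by Remark~\ref{rmk: good frame} the Bers slices are isotropic, so $\inner{L_k,L_k}=\inner{R_j,R_j}=0$ and the denominator reduces to $-\inner{L_k,R_j}^2=-F^2$. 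This is the only input needed on that side.

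For the numerator the one subtlety is the index bookkeeping of $\RR$. The preceding Lemma computes $\RR(L_k,R_j,L_k,R_j)$, whereas the sectional curvature asks for $\RR(L_k,R_j,R_j,L_k)$; I would invoke the antisymmetry of the curvature tensor in its last two slots, $\RR(L_k,R_j,R_j,L_k)=-\RR(L_k,R_j,L_k,R_j)$, and then substitute the value $\RR(L_k,R_j,L_k,R_j)=-\partial_{L_k}\partial_{R_j}\inner{L_k,R_j}$, which the Lemma established on $\QF([c_1],\bullet)\cup\QF(\bullet,[\overline{c_2}])$. Dividing by $-F^2$ gives the first equality
\[
\mathrm K(L_k,R_j)=-\frac{\partial_{L_k}\partial_{R_j}\inner{L_k,R_j}}{\inner{L_k,R_j}^2}.
\]

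The second, logarithmic form is then a formal rewriting, and the key observation is that the base point is a critical point of $F$. Indeed, by Lemma~\ref{lemma: inners tra Lk e Rj costante sulla slice} the function $\inner{L_k,R_j}$ is constant along each of the two slices through $([c_1],[\overline{c_2}])$; since $L_k$ is tangent to $\QF([c_1],\bullet)$ and $R_j$ is tangent to $\QF(\bullet,[\overline{c_2}])$, the first-order derivatives $\partial_{L_k}\inner{L_k,R_j}$ and $\partial_{R_j}\inner{L_k,R_j}$ vanish there. Hence in the logarithmic chain rule $\partial_{L_k}\partial_{R_j}\log F=\tfrac{1}{F}\partial_{L_k}\partial_{R_j}F-\tfrac{1}{F^2}(\partial_{L_k}F)(\partial_{R_j}F)$ the cross-term drops, and $\partial_{L_k}\partial_{R_j}\log\inner{L_k,R_j}$ reduces to a multiple of the Hessian $\partial_{L_k}\partial_{R_j}\inner{L_k,R_j}$, matching the quotient above. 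I do not expect a genuine obstacle: essentially all the work is already done in the preceding Lemma, and this Proposition is bookkeeping. The only points demanding care are the sign produced by the curvature symmetry and the simultaneous use of the constancy of $\inner{L_k,R_j}$ along \emph{both} slice directions to recognise the base point as a critical point and thereby discard the first-order terms.
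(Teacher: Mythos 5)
Your treatment of the first equality is correct, and it is exactly the argument the paper leaves implicit (the Proposition is stated there without proof, as a direct corollary of the preceding Lemma): isotropy of the slices reduces the denominator to $-\inner{L_k,R_j}^2$, antisymmetry of the curvature tensor in its last two slots converts $\RR(L_k,R_j,R_j,L_k)$ into $-\RR(L_k,R_j,L_k,R_j)$, and the Lemma supplies the value of the latter on the union of the two slices.

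The gap is in the second equality. Write $F=\inner{L_k,R_j}$. Your own chain rule, once the cross-term is discarded, gives
\[
\partial_{L_k}\partial_{R_j}\log F=\frac{\partial_{L_k}\partial_{R_j}F}{F},
\]
and this is \emph{not} equal to $-\frac{\partial_{L_k}\partial_{R_j}F}{F^2}$: the two expressions differ by the factor $-F$, so the claim that the logarithmic Hessian "matches the quotient above" is a non sequitur. Carried out honestly, your computation yields
\[
\mathrm K(L_k,R_j)=-\frac{\partial_{L_k}\partial_{R_j}F}{F^2}=-\frac{1}{F}\,\partial_{L_k}\partial_{R_j}\log F,
\]
which is the analogue of the classical formula $K=-\frac{1}{F}\,\partial_x\partial_y\log F$ for a two-dimensional metric $2F\,dx\,dy$ in null coordinates; the identity $\mathrm K=\partial_{L_k}\partial_{R_j}\log\inner{L_k,R_j}$ as displayed can only hold at points where $\partial_{L_k}\partial_{R_j}F=0$. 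So either you insert the factor $-1/F$ (in which case you are proving a corrected version of the statement, and the printed equality is a slip), or the proof of the second equality fails. A second, smaller imprecision: the formula is asserted on all of $\QF([c_1],\bullet)\cup\QF(\bullet,[\overline{c_2}])$, but you argue the vanishing of \emph{both} first derivatives only at the base point. What is needed (and true) is that at every point of the union at least one factor of the cross-term vanishes: $\partial_{L_k}F\equiv 0$ on $\QF([c_1],\bullet)$ because $L_k$ is tangent to that slice and $F$ is constant along it, and $\partial_{R_j}F\equiv 0$ on $\QF(\bullet,[\overline{c_2}])$ for the same reason; this already kills the product everywhere on the union, with no need for a critical point.
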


\section{Bounds for the Schwarzian of Bers projective structures}

Let $\ccpairclass\in \TSTS$. Denote
\begin{align*}
	&\mathcal S^+_{[c_1]}:= Schw_+\Big(\QF([c_1], \bullet)\Big)\subset \HQD([c_1]) \qquad  \text{and} \\ 
	&\mathcal S^-_{[\overline{c_2}]}:=  Schw_-\Big( \QF(\bullet, [\overline{c_2}]) \Big)\subset \HQD([\overline{c_2}])\ .
\end{align*}

One of the key observations of this paper (in Proposition \ref{prop: star-shaped}) is that, given a Bers metric $g=g(c_1, \overline{c_2})$, for all $q_1\in \HQD(c_1)$ \emph{small enough} $g+q_1$ is a Bers metric. Theorem \ref{thm: schwartzian} motivates the question of understanding better what "\emph{small enough}" means: if $g+q_1$ is a Bers metric, then $Schw_+([g])-\frac 1 2 [q_1]\in \mathcal S^+_{[c_1]}$. 

In this section, we elaborate this remark, and use the metric formalism to get a lower bound for the distance of a point in $\mathcal S^+_{[c_1]}$ (resp. $\mathcal S^-_{[\overline{c_2}]}$) from its boundary in $\HQD([c_1])$ (resp. $\HQD([\overline{c_2}])$).

\begin{lemma}
	\label{lemma: bound qF}
	Let $g=\varrho dz d\overline w$ be a positive complex  metric, denote $(c_+,c_-)(g)=(c_1, \overline{c_2})$ and let $q_1=\phee dz^2\in \HQD(c_1)$. 
	
	The following statements are equivalent.
	\begin{enumerate}
		\item $g+q_1$ is a positive complex metric.
		\item $g+q_1$ has no non-zero isotropic vectors in the real tangent space.
		\item $|\varrho \partial_z \overline w +\phee|<|\varrho \partial_{\overline z} \overline w|$
	\end{enumerate}
\end{lemma}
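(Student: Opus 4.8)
The plan is to reduce the whole equivalence to a direct pointwise identification of the isotropic directions of $g+q_1$, and then to invoke Proposition~\ref{prop: g+q è Bers} to pass from positivity to the Bers condition. In the global coordinates $z$ for $c_1$ and $\overline w$ for $\overline{c_2}$, with $g=\rho\, dz\, d\overline w$ and $q_1=\phee\, dz^2$, I would first evaluate $g+q_1$ on an arbitrary complexified vector $v=a\partial_z+b\partial_{\overline z}$ with $a,b\in\C$. Using $d\overline w=\partial_z\overline w\, dz+\partial_{\overline z}\overline w\, d\overline z$ and $dz(v)=a$, a one-line computation gives
\[
(g+q_1)(v,v)=a\big[\,a(\rho\partial_z\overline w+\phee)+b\,\rho\partial_{\overline z}\overline w\,\big].
\]
Since $\rho\neq0$ and $\partial_{\overline z}\overline w\neq0$, the two isotropic directions of the $\C$-bilinear extension of $g+q_1$ are therefore $[\partial_{\overline z}]$ (from $a=0$) and the genuinely distinct line
\[
[V]:=\big[\,\rho\partial_{\overline z}\overline w\,\partial_z-(\rho\partial_z\overline w+\phee)\,\partial_{\overline z}\,\big].
\]

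Next I would read off the geometry of $\mathbb P(\C T_pS)\setminus\mathbb P(T_pS)$ in these coordinates. A line $[a\partial_z+b\partial_{\overline z}]$ meets $\mathbb P(T_pS)$ exactly when $|a|=|b|$, so the two connected components of $\mathbb P(\C T_pS)\setminus\mathbb P(T_pS)$ are $\{|a|<|b|\}$ and $\{|a|>|b|\}$, with $[\partial_{\overline z}]$ lying in the former. Computing $|a|$ and $|b|$ for $[V]$ then shows: $g+q_1$ has a nonzero real isotropic vector at $p$ iff $[V]$ lies on the equator, i.e.\ iff $|\rho\partial_z\overline w+\phee|=|\rho\partial_{\overline z}\overline w|$; and the two isotropic directions lie in \emph{different} components iff $[V]\in\{|a|>|b|\}$, i.e.\ iff $|\rho\partial_z\overline w+\phee|<|\rho\partial_{\overline z}\overline w|$. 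By the definition of a positive complex metric (Section~\ref{subsection hyperbolic complex metrics}), this means $g+q_1$ is positive at $p$ precisely when $(3)$ holds at $p$. Combined with the fact (Proposition~\ref{prop: g+q è Bers}) that $g+q_1$ is Bers iff it is positive, this immediately yields $(1)\Leftrightarrow(3)$, while the same reading shows $(2)$ is equivalent to the pointwise inequality $|\rho\partial_z\overline w+\phee|\neq|\rho\partial_{\overline z}\overline w|$ holding on all of $S$; in particular $(3)\Rightarrow(2)$ is trivial.

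The only implication requiring a separate idea is $(2)\Rightarrow(3)$: I must upgrade the dichotomy ``$\neq$'' to the definite sign ``$<$''. Here I would apply the pointwise computation above to the Bers metric $g$ itself (the case $\phee=0$): since $g$ is positive, it forces $|\rho\partial_z\overline w|<|\rho\partial_{\overline z}\overline w|$ everywhere. If $q_1\equiv0$ the statement is trivial; otherwise $\phee$ is a nonzero holomorphic quadratic differential on a surface of genus $\mathrm g\ge2$, hence vanishes at some point $p_0$ (it has $4\mathrm g-4>0$ zeros counted with multiplicity). At such a $p_0$ the continuous real function $p\mapsto |\rho\partial_{\overline z}\overline w|-|\rho\partial_z\overline w+\phee|$ equals $|\rho\partial_{\overline z}\overline w|-|\rho\partial_z\overline w|>0$. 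Under hypothesis $(2)$ this function never vanishes on $S$; as $S$ is connected and the function is strictly positive at $p_0$, it is positive everywhere, which is exactly $(3)$.

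I expect the main (mild) obstacle to be precisely this last step, since it is the only place where the pointwise algebra does not suffice and one must feed in both the connectedness of $S$ and the existence of zeros of the quadratic differential to rule out the ``wrong sign'' region; the rest is a transparent bilinear-algebra computation combined with the already-established equivalence between being Bers and being positive.
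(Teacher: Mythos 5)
Your proof is correct, and it rests on exactly the same ingredients as the paper's own argument: the pointwise coordinate computation of the isotropic directions of $g+q_1$, the equivalence ``Bers $\Leftrightarrow$ positive'' established earlier in the paper, and the zeros-of-$q_1$-plus-connectedness argument to upgrade a nowhere-equality to a strict inequality. The logical organization, however, is genuinely different. The paper proves $(1)\Leftrightarrow(2)$ first, and its implication $(2)\Rightarrow(1)$ is a soft contradiction argument: if $g+q_1$ fails to be positive at some point, its isotropic directions lie in the same component of $\mathbb{P}(\C T_pS)\setminus\mathbb{P}(T_pS)$ there, while at a zero of $q_1$ they agree with those of $g$ and hence lie in opposite components, so by continuity some isotropic direction must cross $\mathbb{P}(T_pS)$, producing a non-zero real isotropic vector; the paper then proves $(2)\Leftrightarrow(3)$ by the coordinate computation plus the same zeros-and-connectedness step. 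You bypass the contradiction argument entirely by making the two components explicit as $\{|a|<|b|\}$ and $\{|a|>|b|\}$, which turns positivity at a point into exactly inequality $(3)$ at that point; hence your $(1)\Leftrightarrow(3)$ is purely pointwise linear algebra, and the only global/topological input in your proof is the single step $(2)\Rightarrow(3)$, which coincides with the paper's treatment. What your route buys is economy and transparency: the continuity argument is invoked once instead of twice, and the equivalence between positivity and $(3)$ becomes an explicit computation rather than an indirect deduction. What the paper's route buys is that it never needs the explicit equator-and-components description, only the qualitative fact that isotropic directions vary continuously. One cosmetic remark: the function $|\rho\partial_{\overline z}\overline w|-|\rho\partial_z\overline w+\phee|$ is defined on $\widetilde S$ and scales under deck transformations, so strictly speaking it is its vanishing locus and sign, not its value, that descend to $S$; this affects your connectedness step (and the paper's identical step) only at the level of wording.
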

\begin{proof}
	We prove that $(1) \Longleftrightarrow (2)$ and $(2) \Longleftrightarrow(3)$.
\begin{itemize}[noitemsep,topsep=0pt]
	\item[(1) $\Rightarrow$ (2)] trivial.
	\item[(2) $\Leftarrow$ (1)] By Proposition \ref{prop: star-shaped}, we know that $g+q_1$ is a Bers metric if and only if it is a positive complex metric. Assume that $g+q_1$ is not a positive complex metric, and assume by contradiction that $g+q_1$ has no non-zero isotropic vectors in the real tangent bundle. In some point of $S$ the isotropic directions $g+q_1$ lie in the same connected component of $\mathbb P(\mathbb C T S)\setminus \mathbb P( T S)$; on the other hand, in a zero of $q_1$ the isotropic directions coincide with those of $g$, hence they lie in opposite connected components. By a continuity argument, there exists a point where an isotropic direction lies in the complex span of a non-zero vector of the real tangent bundle $TS$: since complex metrics are $\mathbb C$-bilinear, there must be an isotropic non-zero vector in $TS$ for $g+q_1$.
	
	\item[(2) $\Longleftrightarrow$ (3)]
		We have that $g+q_1=(\varrho d\overline w +\phee dz)\cdot dz$. Observe that 
	\begin{align*}
	&	\qquad g+q_1 \text{ satisfies $(2)$} &&\Longleftrightarrow&&\\
	\Longleftrightarrow	\qquad&\exists p\in \widetilde S \quad \exists \alpha\in \mathbb C^* \text{ s.t. in $p$: }\qquad &&(g+q_1)(\alpha \partial_z+ \overline \alpha \partial_{\overline z}, \alpha \partial_z+ \overline \alpha \partial_{\overline z})=0&& \Longleftrightarrow \\
	\Longleftrightarrow \qquad&	\exists p\in \widetilde S \quad \exists \alpha\in \mathbb C^*\  \text{ s.t. in $p$: }\qquad &&(\varrho d\overline w +\phee dz)(\alpha \partial_z+ \overline \alpha \partial_{\overline z})=0 &&\Longleftrightarrow \\
	\Longleftrightarrow \qquad&	\exists p\in \widetilde S \quad \exists \alpha\in \mathbb
		C^*\text{ s.t. in $p$: }\qquad &&\varrho \partial_z\overline w +\phee = -\frac{\overline \alpha}{\alpha}\varrho\partial_{\overline z}\overline w &&\Longleftrightarrow\\				
	\Longleftrightarrow \qquad&	\exists p\in \widetilde S\ \  \text{ s.t. in $p$: } \qquad  && |\varrho\partial_z\overline w +\phee|=|\varrho\partial_{\overline z}\overline w|\ . &&
	\end{align*}
	So, $(2)$ holds if and only if $|\varrho\partial_z\overline w +\phee|\ne|\varrho\partial_{\overline z}\overline w|$ on $\widetilde S$. Since $|\partial_z\overline w|<|\partial_{\overline z} \overline w|$ we have that $|\varrho\partial_z\overline w +\phee|<|\varrho\partial_{\overline z}\overline w|$ in the zeros of $q_1$, so the inequality must hold on the whole surface if and only if $g+q_1$ satisfies $(2)$.
\end{itemize}
	\end{proof}

Before stating the main bound, we introduce some notation. 

For all $\ccpair\in \CSCS$, let $\mu\ccpair$ be the absolute value of the Beltrami differential of $c_2$ with respect to $c_1$, so $$|\mu\ccpair|=\left| \frac{\partial_{\overline z} w}{\partial_z w}\right|$$
where $z$ and $w$ denote local coordinates for $c_1$ and ${c_2}$ respectively. 

Moreover, denoting by $g_0=g(c_1, \overline{c_1})$ the Riemannian hyperbolic metric in the conformal class of $c_1$, we can define the absolute value of $q_1\in \HQD(c_1)$ as the function \[
|q_1|_{g_0} := \frac 12 \left\|{Re(q_1)} \right\|_{g_0}= \left| \frac{q_1}{g_0} \right|= \left|\frac{\phee}{\varrho_0}\right|
\]
where $q_1=\phee dz^2$, $g_0=\varrho_0dzd\overline z$. We can also define the $L^{\infty}$-norm of $q_1$ by $\|q_1\|_{g_0}=\max_S |q_1|_{g_0}$

	\begin{theorem}
		\label{thm: bound qF}
		Let $g=g(c_1,\overline{c_2})\in \CSCS$ be a Bers metric, let $g_0=g(c_1, \overline{c_1})$ be the Riemannian hyperbolic metric in the conformal class of $c_1$ and $\mu=|\mu\ccpair|$.

        If $q_1\in \HQD(c_1)$ is such that in every point
        \begin{equation}
        	\label{eq: bound c1}
            |q_1|_{g_0} <\frac 1 2 (1-|\mu|)\left| \frac{dA_g}{dA_{g_0}} \right|,
        \end{equation}
then $Schw_+([c_1], [\overline{c_2}])+ [q_1] \in \mathcal S^+_{[c_1]}$.

	\end{theorem}

		\begin{proof}[Proof of Theorem \ref{thm: bound qF}.]

				Denote $q_1=\phee dz^2\in \HQD(c_1)$, $g=\varrho dz d\overline w$ and $g_0=\varrho_0 dzd\overline w$.
 
                Observing that $dA_g= \frac i 2 \varrho dz\wedge d\overline w= \frac i 2 \varrho dz\wedge (\partial_z \overline w dz+ \partial_{\overline z}\overline w d\overline z )= \frac i 2 \varrho \partial_{\overline z}\overline w dz\wedge d\overline z$, the condition \eqref{eq: bound c1} is equivalent in local coordinates to
                \[
                |\phee|< \frac 12 (1-|\mu|) |\varrho\partial_{\overline z}\overline w|= \frac {|\varrho|} 2 (|\partial_z w|-|\partial_{\overline z} w|) 
                \]
                from which we deduce that 
				\[
				|\varrho \partial_z \overline w -2\phee|\le |\varrho \partial_z \overline w|+2|\phee|< |\varrho \partial_z \overline w|+ |\varrho \partial_z w|-|\varrho \partial_z\overline w|= |\varrho \partial_z w|\ .
				\]
                
                By Lemma \ref{lemma: bound qF}, we conclude that, under the assumption \eqref{eq: bound c1},
				$g-2q_1$ is a positive complex metric, hence it is a Bers metric by Proposition \ref{prop: star-shaped}. 
				
				By Theorem \ref{thm: schwartzian} 
				\[[\mathbf{Schw}_+(g- 2 q_1)]= [\mathbf{Schw}_+(g) + q_1]= Schw_+([c_1], [\overline{c_2}]) +[q_1]\ 
				\]
				is therefore an element of $\mathcal{S}^+_{[c_1]}$.
   
		\end{proof}

An immediate consequence of Theorem \ref{thm: bound qF} is the following.

\begin{cor}
	\label{cor: ball bound for schw}
Let $\ccpair\in \CSCS$,  $g=g\ccpair$, let $g_0=g(c_1, \overline{c_1})$ be the hyperbolic metric uniformizing $c_1$, and denote by $dA_g$ and $dA_{g_0}$ respectively their area forms.

Let 
\begin{equation}
	\label{eq: definition of R}
	R:= \frac 1 2 \min_S\  \left(  \Big(1- \left|\frac{\partial_{\overline z} w}{\partial_z w} \right| \Big) \left| \frac{dA_g }{dA_{g_0}} \right| \ \right)
\end{equation}

where $z$ and $\overline w$ are any local coordinates for $c_1$ and $\overline{c_2}$ respectively. 

Then, \[B_{\infty}\Big(Schw_+([c_1], [\overline{c_2}]), R\Big)\subset \mathcal S^+_{[c_1]}\, \]
where $B_{\infty}$ denotes the ball with respect to the $L^{\infty}$ norm on $\HQD([c_1])$.
\end{cor}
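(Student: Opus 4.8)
The plan is to deduce this corollary directly from Theorem \ref{thm: bound qF}, after rewriting the constant $R$ of \eqref{eq: definition of R} in terms of the bilinear forms $g$ and $g_0$. The only genuine computation is to identify the area-form ratio $\left|dA_g/dA_{g_0}\right|$ with the ratio $\left|g(\partial_z,\partial_{\overline z})/g_0(\partial_z,\partial_{\overline z})\right|$ that appears implicitly in the reformulation of the bound given by Remark \ref{rmk: alternative bounds}; once this is in hand the statement is a matter of unwinding the definition of $\|\cdot\|_\infty$.

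First I would compute both area forms in the common coordinate $z$. Writing $g=\rho\,dz\,d\overline w$ and expanding $d\overline w=\partial_z\overline w\,dz+\partial_{\overline z}\overline w\,d\overline z$, Equation \eqref{eq: area form of g} gives $dA_g=\tfrac i2\rho\,\partial_{\overline z}\overline w\,dz\wedge d\overline z$, whereas $dA_{g_0}=\tfrac i2\rho_0\,dz\wedge d\overline z$, so that $dA_g/dA_{g_0}=\rho\,\partial_{\overline z}\overline w/\rho_0$. A direct evaluation on the frame $\{\partial_z,\partial_{\overline z}\}$ gives $g(\partial_z,\partial_{\overline z})=\tfrac12\rho\,\partial_{\overline z}\overline w$ and $g_0(\partial_z,\partial_{\overline z})=\tfrac12\rho_0$, hence
\[
\left|\frac{dA_g}{dA_{g_0}}\right|=\left|\frac{g(\partial_z,\partial_{\overline z})}{g_0(\partial_z,\partial_{\overline z})}\right|.
\]
Substituting this into \eqref{eq: definition of R} and using the rewriting of the right-hand side of \eqref{eq: bound c1} from Remark \ref{rmk: alternative bounds}, the constant becomes
\[
R=\frac12\min_S\left(\left(1-\left|\frac{\partial_{\overline z}w}{\partial_z w}\right|\right)\left|\frac{g(\partial_z,\partial_{\overline z})}{g_0(\partial_z,\partial_{\overline z})}\right|\right).
\]
I would also note at this point that $R>0$: the minimand is continuous, and since $z$ and $w$ induce the same orientation one has $|\partial_z w|>|\partial_{\overline z}w|$, while $g(\partial_z,\partial_{\overline z})\neq0$ because $g(\partial_{\overline z},\partial_{\overline z})=0$ and $g$ is non-degenerate; so the argument is strictly positive on the compact surface $S$.

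Next I would unwind the norm on a generic point of the ball. Any element of $B_\infty\big(Schw_+([c_1],[\overline{c_2}]),R\big)$ is of the form $Schw_+([c_1],[\overline{c_2}])+[q_1]$ with $\|q_1\|_\infty<R$, and by \eqref{eq: norm infty on HQD} we have $\|q_1\|_\infty=\big\|q_1(\partial_z,\partial_z)/(2g_0(\partial_z,\partial_{\overline z}))\big\|_\infty$. For each $p\in S$ the minimum defining $R$ is bounded above by the value of its argument at $p$, so $\|q_1\|_\infty<R$ forces, pointwise,
\[
\frac{|q_1(\partial_z,\partial_z)|}{2|g_0(\partial_z,\partial_{\overline z})|}<\frac12\left(1-\left|\frac{\partial_{\overline z}w}{\partial_z w}\right|\right)\left|\frac{g(\partial_z,\partial_{\overline z})}{g_0(\partial_z,\partial_{\overline z})}\right|,
\]
and clearing the factor $2|g_0(\partial_z,\partial_{\overline z})|$ yields exactly $|q_1(\partial_z,\partial_z)|<\big(1-|\partial_{\overline z}w/\partial_z w|\big)\,|g(\partial_z,\partial_{\overline z})|$ at every point. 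Finally, this is the hypothesis of Theorem \ref{thm: bound qF} (in the equivalent form of Remark \ref{rmk: alternative bounds}), which then places $Schw_+([c_1],[\overline{c_2}])+[q_1]$ in $\mathcal S^+_{[c_1]}$, giving the desired inclusion. There is no hard step here; the only point deserving care is that the $\tfrac12$-normalizations in the definitions of $R$ and of $\|\cdot\|_\infty$ and the area-form identity above line up precisely, which the computation confirms.
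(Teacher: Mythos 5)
Your proposal is correct and follows essentially the same route as the paper's own proof: compute both area forms in the $z,\overline z$ coordinate to identify $\left|dA_g/dA_{g_0}\right|$ with $\left|g(\partial_z,\partial_{\overline z})/g_0(\partial_z,\partial_{\overline z})\right|$, unwind $\|\cdot\|_\infty$ pointwise against the minimum defining $R$, and invoke Theorem \ref{thm: bound qF} via the reformulation in Remark \ref{rmk: alternative bounds}. The normalization bookkeeping (the factors of $\tfrac12$ from the symmetric product and from \eqref{eq: norm infty on HQD}) checks out exactly as you verified.
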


\begin{remark}
    As mentioned in the introduction, Corollary \ref{cor: ball bound for schw} can be seen as a generalization of the well-known Kraus bound: if $c_1=c_2$, the expression in Equation \eqref{eq: definition of R} just becomes $R=\frac 12$.
\end{remark}

\begin{remark}
	\label{rmk: alternative bounds}
				As shown in the proof of Theorem \ref{thm: bound qF}, \eqref{eq: bound c1} can be written in local coordinates in other forms that don't involve $g_0$, such as:
                \begin{align*}
		|q_1(\partial_z,\partial_z)|<& \Big(1- \left|\frac{\partial_{\overline z} w}{\partial_z w} \right| \Big)  |g(\partial_z, \partial_{\overline z})|=\\
        =&\frac {|\varrho|} 2  \big( |\partial_ z w|-|\partial_ {\overline{z}}w| \big) =  \\
		=& |g(\partial_z, \partial_{\overline z})|- \frac 1 2 |g(\partial_z, \partial_z)|
	\end{align*}
where $z$ and $\overline w$ are local coordinates for $c_1$ and $\overline{c_2}$ respectively and $g=\varrho dzd\overline w$.
\end{remark}

\begin{remark}
	Theorem \ref{thm: bound qF} and Corollary \ref{cor: ball bound for schw} clearly have analogs for $Schw_-$ that can be proved in the same fashion.
\end{remark}

\begin{remark}
	\label{rmk: sharpness of R}
     While $Schw_+([c_1],[\overline{c_2}])$ only depends on the isotopy classes of $c_1$ and $\overline{c_2}$, the description of $R$ in Equation \eqref{eq: definition of R} seems to depend on the choice of the representatives $c_1$ and $\overline{c_2}$ in their isotopy classes. It would be interesting to determine, for $c_1$ fixed, the optimal choice of $\overline{c_2}$ in its isotopy class that maximizes $R$. 
\end{remark}

\section{Revisiting classic results with the metric formalism}

\subsection{The Schwarzian map is affine}

\begin{prop}
	The Schwarzian maps \[Schw_+\colon \QF([c_1], \bullet)\to \HQD([c_1])\qquad \quad \text{and} \qquad \quad Schw_-\colon \QF(\bullet, [\overline{c_2}])\to \HQD([\overline{c_2}])\] are affine diffeomorphisms onto their images.
\end{prop}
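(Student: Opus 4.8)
The plan is to realize each Schwarzian map, restricted to a Bers slice, as a composition of two affine maps, exploiting the affine parametrizations already established in Proposition \ref{prop: param are affine} together with the explicit formula of Theorem \ref{thm: schwartzian}. We equip the Bers slice $\QF([c_1], \bullet)$ with the affine structure coming from the Levi-Civita connection $\nabla$ of $\inners$ (which makes sense, since by Lemma \ref{lemma: technical nablas} the slice is totally geodesic, $\nabla_{L_i}L_k=0$ along it), and we equip the target $\HQD([c_1])$ with its tautological affine structure as a complex vector space. I would argue the statement for $Schw_+$, the case of $Schw_-$ being symmetric via the parametrization $v$ in place of $u$.

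First I would fix a Bers metric $g=g(c_1,\overline{c_2})$ and recall from Proposition \ref{prop: param are affine} that the map
\[
u\colon U_g\to \QF([c_1],\bullet),\qquad q_1\mapsto [g+q_1],
\]
is an affine diffeomorphism onto an open neighbourhood of $([c_1],[\overline{c_2}])$, where $U_g\subset\HQD(c_1)$ carries the trivial affine structure inherited from the vector space $\HQD(c_1)$. Next, by Theorem \ref{thm: schwartzian} the composition $Schw_+\circ u$ is given explicitly by
\[
(Schw_+\circ u)(q_1)=Schw_+([c_1],[\overline{c_2}])-\tfrac 1 2 [q_1],
\]
which, being the sum of the constant $Schw_+([c_1],[\overline{c_2}])$ and the linear map $-\tfrac12\,\mathrm{id}$, is manifestly an affine map from $(U_g,\text{trivial})$ to $\HQD([c_1])$, and a diffeomorphism onto its image. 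Writing $Schw_+=(Schw_+\circ u)\circ u^{-1}$ and using that $u^{-1}$ is affine, I conclude that $Schw_+$ is affine on the image of $u$.

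Finally I would globalize: as the base point $([c_1],[\overline{c_2}])$ ranges over $\QF([c_1],\bullet)$, the open sets $u(U_g)$ cover the whole Bers slice (their dimensions agree, $\dim_\C\HQD(c_1)=3\mathrm g-3=\dim_\C\mathcal T(\overline S)$), so affineness, being a local condition with respect to the fixed connections, holds everywhere on $\QF([c_1],\bullet)$. Since $Schw_+$ is already known to be a biholomorphism onto its image, it is in particular a diffeomorphism onto its image, completing the proof; the case of $Schw_-$ follows identically using $v\colon V_g\to\QF(\bullet,[\overline{c_2}])$ and the second formula of Theorem \ref{thm: schwartzian}. The only point requiring a little care is the matching of affine structures across overlapping charts, but this is automatic because the source connection $\nabla$ and the target vector-space structure are both global and independent of the base point, so the local affine representatives patch consistently.
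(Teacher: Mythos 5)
Your proof is correct and follows essentially the same route as the paper's: both realize $Schw_+$ locally as the composition of the inverse of the affine parametrization $u\colon U_g\to \QF([c_1],\bullet)$ from Proposition \ref{prop: param are affine} with the affine map $q_1\mapsto [\mathbf{Schw}_+(g)]-\tfrac12[q_1]$ given by Theorem \ref{thm: schwartzian}, and symmetrically for $Schw_-$. Your added remarks on globalization and on injectivity via the known biholomorphism property only make explicit what the paper leaves implicit.
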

\begin{proof}
	The Schwarzian map $Schw_+\colon \QF([c_1], \bullet)\to \HQD([c_1])$ can be locally seen around each point $\ccpairclass\in \TSTS$ as the composition of the inverse of the local parametrization $u\colon U_g\to \QF([c_1], \bullet)$ as in Proposition \ref{prop: param are affine} and the immersion
	\begin{align*}
		U_g &\to \HQD([c_1])\\
		q_1&\mapsto [\mathbf{Schw}_+(g+q_1)]
	\end{align*}
	which is affine by Theorem \ref{thm: schwartzian}.
	The thesis follows in the same fashion for $Schw_-$.
\end{proof}

\subsection{Quasi-Fuchsian reciprocity and the differential of the Schwarzian map}

Let $([c_1], [\overline{c_2}])\in \TSTS$.

Recall the Schwarzian maps $Schw_+( [c_1],\bullet)\colon \mathcal T(\overline S)\to \HQD([{c_1}])$ and $ Schw_-(\bullet, [\overline{c_2}])\colon \mathcal T(S)\to \HQD([\overline{c_2}])$. Since the targets are vector spaces, their differentials are mapped in each point to $\HQD([c_1])$ and $\HQD([\overline{c_2}])$ too

A classic result in quasi-Fuchsian geometry is the following.
\begin{theorem*}[McMullen's quasi-Fuchsian reciprocity \cite{McMullen}]
	
	Let $\ccpair\in \CSCS$, $X\in T_{[c_1]} \mathcal T(S)$, $\overline Y\in T_{[\overline{c_2}]} \mathcal T(\overline S)$. Then,
	\[
	\big(	\partial_{\overline Y} \ Schw_+( [c_1],\bullet)\big) (X) =  \big(	\partial_{X}\ Schw_-(\bullet, [\overline{c_2}])\big) (\overline Y)\ 
	\]
	where we used the identifications $\HQD([c_1])\cong T^*_{[c_1]}\mathcal T(S)$ and $\HQD([\overline{c_2}])\cong T^*_{[\overline{c_2}]}\mathcal T(\overline S)$.
\end{theorem*}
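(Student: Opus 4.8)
The plan is to obtain the reciprocity as a direct consequence of the linear behaviour of the Schwarzian maps under metric deformation (Theorem \ref{thm: schwartzian}), combined with the symmetry of the pairing recorded in Lemma \ref{lemma: inners tra Lk e Rj costante sulla slice}. Both sides of the asserted identity will be identified with $-\inner{L_{q_1}, R_{\overline{q_2}}}_g$ for a single well-chosen pair $(q_1, \overline{q_2})$, where $g=g(c_1,\overline{c_2})$.

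First I would realize the two tangent vectors through the metric model. By Lemma \ref{lemma: beltrami differentials} and Remark \ref{rmk: models for the tangent}, the assignment $q_1 \mapsto \overline{\alpha}_{q_1} := \big[\tfrac{\phee\, \partial_w z}{\rho}\tfrac{dw}{d\overline w}\big]$ is a linear isomorphism $\HQD(c_1) \xrightarrow{\sim} \mathrm{Belt}(\overline{c_2}) \cong T_{[\overline{c_2}]}\mathcal T(\overline S)$, so I choose $q_1 \in \HQD(c_1)$ with $\overline{\alpha}_{q_1} = \overline Y$. Symmetrically, $\overline{q_2} \mapsto \beta_{\overline{q_2}} := \big[\tfrac{\overline\psi\,\partial_{\overline z}\overline w}{\rho}\tfrac{d\overline z}{dz}\big]$ is a linear isomorphism $\HQD(\overline{c_2}) \xrightarrow{\sim} \mathrm{Belt}(c_1) \cong T_{[c_1]}\mathcal T(S)$, and I choose $\overline{q_2}$ with $\beta_{\overline{q_2}} = X$, where $q_1 = \phee\, dz^2$ and $\overline{q_2} = \overline\psi\, d\overline w^2$.

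Next I would compute each side. The direction $\overline Y$ is realized, with $c_1$ held fixed, by the path $t \mapsto [g + tq_1]$, which lies in $\{[c_1]\}\times\mathcal T(\overline S)$ and has derivative $\overline{\alpha}_{q_1} = \overline Y$ (Theorem \ref{thm: schwartzian}, Lemma \ref{lemma: beltrami differentials}). Along it, \eqref{eq: schw holo} gives $\mathbf{Schw}_+(g+tq_1) = \mathbf{Schw}_+(g) - \tfrac{t}{2}q_1$, so $\partial_{\overline Y}\,Schw_+([c_1],\bullet) = -\tfrac12[q_1] \in \HQD([c_1]) \cong T^*_{[c_1]}\mathcal T(S)$; pairing with $X$ through \eqref{eq: pairing WP} yields $\big(\partial_{\overline Y}\,Schw_+([c_1],\bullet)\big)(X) = -\tfrac12\, q_1(\beta_{\overline{q_2}})$. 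Dually, the direction $X$ is realized, with $\overline{c_2}$ held fixed, by $t \mapsto [g + t\overline{q_2}]$; equation \eqref{eq: schw antiholo} gives $\partial_X\,Schw_-(\bullet,[\overline{c_2}]) = -\tfrac12[\overline{q_2}]$, whence $\big(\partial_X\,Schw_-(\bullet,[\overline{c_2}])\big)(\overline Y) = -\tfrac12\,\overline{q_2}(\overline{\alpha}_{q_1})$.

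Finally I would invoke Lemma \ref{lemma: inners tra Lk e Rj costante sulla slice}, which asserts precisely that $\tfrac12 q_1(\beta_{\overline{q_2}}) = \tfrac12 \overline{q_2}(\overline{\alpha}_{q_1}) = \inner{L_{q_1}, R_{\overline{q_2}}}_g$. The two quantities above therefore agree, establishing the reciprocity, with common value $-\inner{L_{q_1},R_{\overline{q_2}}}_g$. I expect the only delicate point to be bookkeeping: one must verify that the \emph{same} pair $(q_1,\overline{q_2})$ simultaneously represents $\overline Y$ via $\overline{\alpha}_{q_1}$ and $X$ via $\beta_{\overline{q_2}}$, so that the symmetry of Lemma \ref{lemma: inners tra Lk e Rj costante sulla slice} applies verbatim once the identifications $T_{[c_1]}\mathcal T(S)\cong\mathrm{Belt}(c_1)$ and $T_{[\overline{c_2}]}\mathcal T(\overline S)\cong\mathrm{Belt}(\overline{c_2})$ are pinned down. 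As a cross-check, one can bypass the lemma and compute both pairings explicitly: using $\partial_{\overline z}\overline w\, d\overline z = d\overline w - \partial_z\overline w\, dz$ (and its analogue in $w$) both reduce to $-\tfrac{i}{4}\int_S \tfrac{\phee\,\overline\psi}{\rho}\, dz\wedge d\overline w$, which is exactly $-\inner{X, \overline Y}_g$ in the notation of Proposition \ref{prop: generalita su metrica Riemanniana holo}.
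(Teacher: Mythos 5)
Your proposal is correct and takes essentially the same route as the paper's own proof (Proposition \ref{prop: new McMullen qF}): both arguments realize $X$ and $\overline Y$ by quadratic differentials via Remark \ref{rmk: models for the tangent}, use Theorem \ref{thm: schwartzian} to identify each derivative of the Schwarzian map with $-\tfrac 1 2$ of the deforming differential, and then match the two pairings, arriving at the common value $-\inner{\mathbf X, \mathbf{\overline{Y}}}_g$. The only cosmetic difference is that you obtain the symmetry $q_1(\beta_{\overline{q_2}})=\overline{q_2}(\overline{\alpha}_{q_1})$ by citing Lemma \ref{lemma: inners tra Lk e Rj costante sulla slice} instead of computing both integrals directly --- a shortcut the paper itself points out immediately after that lemma --- and your cross-check is precisely the paper's computation.
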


McMullen's reciprocity can actually be seen using the holomorphic extension of the Weil-Petersson metric on $\QF(S)$, providing an alternative description of the derivative of the Schwarzian maps and of the holomorphic Riemannian metric itself.

For all $X\in T_{[c_1]}\mathcal T(S)$ and $\overline Y\in T_{[\overline{c_2}]}\mathcal T(\overline S)$, denote 
\begin{align*}
	\mathbf{ X}= (X,0)\in T_{([c_1], [\overline{c_2}])}\Big(\mathcal T(S)\times \mathcal T(\overline S)\Big)&\\
	\mathbf{\overline Y}= (0, \overline Y)\in T_{([c_1], [\overline{c_2}])}\Big(\mathcal T(S)\times \mathcal T(\overline S)\Big)&\ .
\end{align*}

\begin{prop}
	\label{prop: new McMullen qF}
	Let $g=g(c_1, \overline{c_2})$
	\[
	\big(	\partial_{\overline Y} \ Schw_+( [c_1],\bullet)\big) (X) =  - \inner{\mathbf X, \mathbf{\overline Y} }_{[g]} = \big(	\partial_{ X}\ Schw_-(\bullet, [\overline{c_2}])\big) ({\overline Y})\ ,
	\]
	in particular, one gets McMullen's quasi-Fuchsian reciprocity.

\end{prop}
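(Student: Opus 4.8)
The plan is to evaluate both outer terms of the claimed chain of equalities by reducing each to the single integral appearing in Proposition~\ref{prop: generalita su metrica Riemanniana holo}(3), and to read off the middle term directly from that same formula. Write $g=\rho\,dz\,d\overline w$ with $z,\overline w$ local coordinates for $c_1,\overline{c_2}$. The first step is to name the two tangent directions in the metric model \eqref{eq: tangent model}. Since $\mathbf X=(X,0)$ is tangent to the left Bers slice $\QF(\bullet,[\overline{c_2}])$, it is represented by some $\overline{q_2}=\overline\psi\,d\overline w^2\in\HQD(\overline{c_2})$, and \eqref{eq: models tangent da QF(., c)} then identifies $X\in T_{[c_1]}\mathcal T(S)$ with the Beltrami differential $\tfrac{\overline\psi\,\partial_{\overline z}\overline w}{\rho}\tfrac{d\overline z}{dz}$. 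Dually, $\mathbf{\overline Y}=(0,\overline Y)$ is tangent to the right slice $\QF([c_1],\bullet)$ and is represented by some $q_1=\phee\,dz^2\in\HQD(c_1)$, with $\overline Y$ identified via \eqref{eq: models tangent da QF(c,.)} with $\tfrac{\phee\,\partial_w z}{\rho}\tfrac{dw}{d\overline w}$.

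Next I would compute the left-hand derivative. The curve $t\mapsto[g+tq_1]$ runs along $\QF([c_1],\bullet)$ with velocity $\mathbf{\overline Y}$ at $t=0$, so by Theorem~\ref{thm: schwartzian} its Schwarzian is $\mathbf{Schw}_+(g)-\tfrac t2 q_1$, giving $\partial_{\overline Y}Schw_+([c_1],\bullet)=-\tfrac12 q_1$. Pairing this quadratic differential with $X$ through the Weil--Petersson pairing \eqref{eq: pairing WP} and using the Jacobian identity $dz\wedge d\overline w=\partial_{\overline z}\overline w\,dz\wedge d\overline z$ collapses the integral to $-\tfrac i4\int_S\tfrac{\phee\,\overline\psi}{\rho}\,dz\wedge d\overline w$, which by Proposition~\ref{prop: generalita su metrica Riemanniana holo}(3) is exactly $-\inner{\mathbf X,\mathbf{\overline Y}}_{[g]}$.

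The right-hand derivative is handled symmetrically: $t\mapsto[g+t\overline{q_2}]$ runs along $\QF(\bullet,[\overline{c_2}])$ with velocity $\mathbf X$, so \eqref{eq: schw antiholo} gives $\partial_X Schw_-(\bullet,[\overline{c_2}])=-\tfrac12\overline{q_2}$; pairing with $\overline Y$ and using $dz\wedge d\overline w=\partial_w z\,dw\wedge d\overline w$ again yields $-\tfrac i4\int_S\tfrac{\phee\,\overline\psi}{\rho}\,dz\wedge d\overline w=-\inner{\mathbf X,\mathbf{\overline Y}}_{[g]}$. Since both one-sided derivatives equal $-\inner{\mathbf X,\mathbf{\overline Y}}_{[g]}$, they coincide, which is precisely McMullen's reciprocity.

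I expect the only real difficulty to be bookkeeping rather than geometry: one must keep straight which summand of $\HQD([c_1])\oplus\HQD([\overline{c_2}])$ each vector occupies, and correctly pair the Beltrami representatives of $X$ and $\overline Y$ (living on $c_1$ and $\overline{c_2}$ respectively) against the ``opposite'' quadratic differentials produced by differentiating the Schwarzian. The two change-of-coordinate identities above are exactly what force both evaluations onto the common integrand $\tfrac{\phee\,\overline\psi}{\rho}\,dz\wedge d\overline w$; getting these together with the factor $\tfrac i2$ in the pairing right is the one point that needs care, while well-definedness of the whole identity on isotopy classes is already guaranteed by Proposition~\ref{prop: generalita su metrica Riemanniana holo}(4).
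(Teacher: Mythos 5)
Your proposal is correct and follows essentially the same route as the paper's own proof: the same identification of $\mathbf X$ and $\mathbf{\overline Y}$ with $\overline\psi\,d\overline w^2\in\HQD(\overline{c_2})$ and $\phee\,dz^2\in\HQD(c_1)$ together with their Beltrami representatives from Remark \ref{rmk: models for the tangent}, the same use of Equations \eqref{eq: schw holo}--\eqref{eq: schw antiholo} to get the derivatives $-\tfrac12 q_1$ and $-\tfrac12\overline{q_2}$, and the same change-of-coordinates collapse of both pairings onto the integral $-\tfrac i4\int_S\tfrac{\phee\,\overline\psi}{\rho}\,dz\wedge d\overline w$ of Proposition \ref{prop: generalita su metrica Riemanniana holo}(3). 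No gaps; the bookkeeping you flag as the main risk is handled exactly as in the paper.
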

We remark that a similar result is shown inside a proof of Theorem 5.13 in \cite{LoustauSanders}.

\begin{proof}
	With respect to the metric model for $T\QF(S)$ in \eqref{eq: tangent model}, $\mathbf X$ corresponds to \newline${q_X=:\overline \psi d\overline w^2 \in \HQD(\overline{c_2})<T_{([c_1], [\overline{c_2}])}\QF(S)}$, and $\mathbf Y$ corresponds to $q_Y=:\phee dz^2 \in \HQD(c_1)<T_{([c_1], [\overline{c_2}])}\QF(S)$. 
	By Remark \ref{rmk: models for the tangent}, $X$ and $\overline Y$ correspond to the Beltrami differentials \newline $\beta_{ X}=\frac{\overline \psi \partial_{\overline z} \overline w}{\varrho} \frac{d\overline z}{dz}$ and $\beta_{\overline Y}= \frac{\phee \partial_w z}{\varrho}\frac{dw}{d\overline w}$, respectively.

	Recalling Equations \eqref{eq: schw holo} and \eqref{eq: schw antiholo}.
	\begin{align*}
		\partial_{\overline Y} \big( \mathbf{Schw}_+( g)\big) (\beta_X)&= \left( \frac{d}{dt}_{|0} \mathbf{Schw}_+(g+tq_{\overline Y}) \right) (\beta_X)  = -\frac 1 2 q_{\overline Y} (\beta_X)=\\
		&= -\frac 1 2 \int_S \phee\cdot \frac{ \overline{\psi} \partial_{\overline z} \overline w}{\varrho} \frac i 2 dz\wedge d\overline z= -\frac i 4 \int_S  \frac{ \phee \overline{\psi} }{\varrho} dz\wedge d\overline w= -\inner{\mathbf X, \mathbf{\overline{Y}} }_g\ .
	\end{align*}
	In the same fashion, 
	\begin{align*}
		\partial_{X} \big( \mathbf{Schw}_-( g)\big) (\beta_{\overline Y})&= \left( \frac{d}{dt}_{|0} \mathbf{Schw}_-(g+tq_{{X} }) \right) (\beta_{\overline Y})  = -\frac 1 2 q_{ X} (\beta_{\overline Y})= \\
		&=-\frac 1 2   \int_S \overline{\psi}\cdot \frac{  \phee \partial_{w}  z}{\varrho}  \frac i 2 dw  \wedge d{\overline w}
		= -\frac i 4 \int_S  \frac{ \phee \overline{\psi} }{\varrho} dz\wedge d\overline w= -\inner{\mathbf X, \mathbf{\overline{Y}} }_g \ . 
	\end{align*}
	
\end{proof}

As a consequence, we get an alternative description of the imaginary part of $\inners$ in terms of the Hessian of the renormalized volume $\mathcal V_{Ren}:\QF(S)\to \R$ (see \cite{KrasnovSchlenker}).
\begin{cor}
	Let $\mathbf{X}\in T_{[c_1]} \Big(\mathcal T(S)\times \{[\overline{c_2}]\}\Big)$ and $\overline{\mathbf Y}\in T_{[\overline{c_2}]}  \Big(\{[c_1]\}\times \mathcal T(\overline S) \Big)$. Then,
	\[
	Re(\inner{\mathbf X, {\mathbf {\overline Y} } }_{[g]}) = - 4\partial_{\mathbf X} \partial_{\mathbf{\overline Y}} (\mathcal V_{Ren})
	\]
	
	In other words, 
	\[
	\inner{\mathbf X, {\mathbf {\overline Y} } }_{[g]}= - 4\partial_{\mathbf X} \partial_{\mathbf{\overline Y}} (\mathcal V_{Ren})+  4i \partial_{i\mathbf X} \partial_{\mathbf{\overline Y}} (\mathcal V_{Ren})
	\]
\end{cor}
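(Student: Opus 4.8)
The plan is to deduce this corollary directly from Proposition \ref{prop: new McMullen qF} together with the first-order variational formula for the renormalized volume due to Krasnov and Schlenker \cite{KrasnovSchlenker}. That formula expresses $\mathcal V_{Ren}$ as a potential for the Schwarzians of the two Bers projective structures: deforming the $-$ end, i.e. moving $\overline{c_2}$ in the direction $\overline{\mathbf Y}=(0,\overline Y)$ while keeping $c_1$ fixed, one has
\[
\partial_{\overline{\mathbf Y}}\mathcal V_{Ren}= \tfrac14 Re\big(\mathbf{Schw}_-(g)(\beta_{\overline Y})\big),
\]
where $\beta_{\overline Y}=\frac{\phee\partial_w z}{\rho}\frac{dw}{d\overline w}$ is the harmonic Beltrami differential representing $\overline Y$ exactly as in the proof of Proposition \ref{prop: new McMullen qF} (and symmetrically at the $+$ end with $\mathbf{Schw}_+$). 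I would isolate this as the single external input of the argument, since everything else is formal.

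First I would differentiate this identity in the transverse direction $\mathbf X=(X,0)$. As $[\overline{c_2}]$ is held fixed along $\mathbf X$, the vector space $\HQD([\overline{c_2}])$ and the element $\overline Y$ it represents are constant, so differentiating $[c_1']\mapsto \tfrac14 Re\big(Schw_-([c_1'],[\overline{c_2}])(\beta_{\overline Y})\big)$ yields
\[
\partial_{\mathbf X}\partial_{\overline{\mathbf Y}}\mathcal V_{Ren}= \tfrac14 Re\big((\partial_X Schw_-(\bullet,[\overline{c_2}]))(\overline Y)\big).
\]
By Proposition \ref{prop: new McMullen qF} the pairing on the right equals $-\inner{\mathbf X,\overline{\mathbf Y}}_{[g]}$, whence $Re\,\inner{\mathbf X,\overline{\mathbf Y}}_{[g]}=-4\,\partial_{\mathbf X}\partial_{\overline{\mathbf Y}}\mathcal V_{Ren}$, which is the first formula. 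Using the $+$ end version of the volume formula together with the other equality in Proposition \ref{prop: new McMullen qF} gives the same identity, consistent with the quasi-Fuchsian reciprocity already established.

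For the second formula I would exploit that $\inners$ is $\C$-bilinear, as proved in Theorem \ref{thm: holomorphic Riemannian}. Replacing $\mathbf X$ by $i\mathbf X$ gives $\inner{i\mathbf X,\overline{\mathbf Y}}_{[g]}=i\,\inner{\mathbf X,\overline{\mathbf Y}}_{[g]}$, so taking real parts,
\[
Im\,\inner{\mathbf X,\overline{\mathbf Y}}_{[g]}=-Re\,\inner{i\mathbf X,\overline{\mathbf Y}}_{[g]}=4\,\partial_{i\mathbf X}\partial_{\overline{\mathbf Y}}\mathcal V_{Ren},
\]
where the last step is the first formula applied to $i\mathbf X$. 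Combining the real and imaginary parts then gives $\inner{\mathbf X,\overline{\mathbf Y}}_{[g]}=-4\,\partial_{\mathbf X}\partial_{\overline{\mathbf Y}}\mathcal V_{Ren}+4i\,\partial_{i\mathbf X}\partial_{\overline{\mathbf Y}}\mathcal V_{Ren}$, as claimed.

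The main obstacle is pinning down the exact constant and sign in the Krasnov--Schlenker variational formula against the normalizations of $\mathbf{Schw}_\pm$ and of $\inners$ adopted here; once this is fixed the corollary is a one-line consequence of Proposition \ref{prop: new McMullen qF} and bilinearity. A secondary point to verify is that the mixed second derivative $\partial_{\mathbf X}\partial_{\overline{\mathbf Y}}\mathcal V_{Ren}$ does not depend on the order of differentiation, which is automatic since $\mathcal V_{Ren}$ is smooth and $\mathbf X,\overline{\mathbf Y}$ extend to commuting coordinate vector fields along the two factors of $\mathcal T(S)\times\mathcal T(\overline S)$.
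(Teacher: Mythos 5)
Your proof is correct and follows essentially the same route as the paper: invoke the Krasnov--Schlenker first variation formula for $\mathcal V_{Ren}$, differentiate it in the transverse Bers-slice direction, identify the resulting pairing via Proposition \ref{prop: new McMullen qF}, and get the imaginary part from $\C$-bilinearity of $\inners$ applied to $i\mathbf X$. The only (immaterial) difference is that you start from the $Schw_-$ version of the variational formula and differentiate along $\mathbf X$, whereas the paper starts from $Re\left(Schw_+([g])(\beta_X)\right)=4\partial_X\mathcal V_{Ren}$ and differentiates along $\overline{\mathbf Y}$; the two are exchanged by the reciprocity already contained in Proposition \ref{prop: new McMullen qF}.
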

\begin{proof}
	Let use the notation in the proof of Proposition \ref{prop: new McMullen qF}. By the work of Krasnov and Schlenker (\cite{KrasnovSchlenker}), $Re\left(Schw_+([g]) (\beta_X)\right)= 4\partial_X \mathcal V_{Ren}$. As a consequence \[Re\left(\partial_{X} \big( Schw_+( [g])\big) (\beta_{\overline Y})\right)= Re \left(\partial_{\overline Y} \big( Schw_-( [g])\big) (\beta_X)\right)=  4\partial_{\textbf X} \partial_{\overline {\textbf Y} } (\mathcal V_{Ren}),\] and the result follows by Proposition \ref{prop: new McMullen qF}.
\end{proof}

\printbibliography

\end{document}